\documentclass[11pt,a4paper]{amsart}

\usepackage{amsmath,amssymb,amsthm,amsfonts,enumerate,mathtools,pinlabel,float,stmaryrd}
\usepackage[mathscr]{eucal}
\usepackage{amsbsy}
\usepackage{graphicx}
\usepackage{caption}
\usepackage{subcaption}
\usepackage{cite}
\usepackage{array}
\usepackage{xcolor}
\usepackage{calc}
\usepackage{comment}
\usepackage{soul}
\usepackage[myheadings]{fullpage}
\usepackage{hyperref}
\hypersetup{
    colorlinks=true,   
    linkcolor=blue,
    citecolor=blue,
}

\newtheorem{cor*}{Corollary}
\newtheorem{prop*}{Proposition}
\newtheorem{thm*}{Theorem}

\newtheorem{theorem}{Theorem}[section]
\newtheorem{cor}{Corollary}[theorem]
\newtheorem{prop}[theorem]{Proposition}
\newtheorem{lem}[theorem]{Lemma}

\theoremstyle{definition}
\newtheorem{defn}[theorem]{Definition}
\newtheorem{exmp}[theorem]{Example}
\newtheorem{rem}[theorem]{Remark}

\newcommand{\f}{\mathfrak{F}}
\newcommand{\F}{\mathcal{F}}
\newcommand{\G}{\mathcal{G}}
\newcommand{\Z}{\mathbb{Z}}
\newcommand{\N}{\mathbb{N}}
\newcommand{\R}{\mathbb{R}}
\newcommand{\Orb}{\mathcal{O}}

\newcommand{\C}{\mathcal{C}}

\newcommand{\h}{\mathcal{H}}
\newcommand{\Le}{\mathcal{L}}

\DeclareMathOperator{\aut}{Aut}
\DeclareMathOperator{\lcm}{lcm}
\DeclareMathOperator{\map}{Mod}

\DeclareMathOperator{\homeo}{Homeo^{+}}

\everymath{\displaystyle}

\makeatletter
\@namedef{subjclassname@2020}{\textup{2020} Mathematics Subject Classification}
\makeatother

\begin{document}

\setstcolor{red}

\title[Infinite metacyclic subgroups of the mapping class group]{Infinite metacyclic subgroups of \\the mapping class group}

\author{Pankaj Kapari}
\address{Department of Mathematics\\
Indian Institute of Science Education and Research Bhopal\\
Bhopal Bypass Road, Bhauri \\
Bhopal 462 066, Madhya Pradesh\\
India}
\email{pankajkapri02@gmail.com}

\author{Kashyap Rajeevsarathy}
\address{Department of Mathematics\\
Indian Institute of Science Education and Research Bhopal\\
Bhopal Bypass Road, Bhauri \\
Bhopal 462 066, Madhya Pradesh\\
India}
\email{kashyap@iiserb.ac.in}
\urladdr{https://home.iiserb.ac.in/$_{\widetilde{\phantom{n}}}$kashyap/}

\author{Apeksha Sanghi}
\address{Department of Mathematics\\
Indian Institute of Science Education and Research Bhopal\\
Bhopal Bypass Road, Bhauri \\
Bhopal 462 066, Madhya Pradesh\\
India}
\email{apekshasanghi93@gmail.com}

\subjclass[2020]{Primary 57K20, Secondary 57M60}

\keywords{surface, pseudo-periodic mapping class, pseudo-Anosov mapping class, metacyclic group}

\begin{abstract}
For $g\geq 2$, let $\text{Mod}(S_g)$ be the mapping class group of the closed orientable surface $S_g$ of genus $g$. In this paper, we provide necessary and sufficient conditions for the existence of infinite metacyclic subgroups of $\text{Mod}(S_g)$. In particular, we provide necessary and sufficient conditions under which a pseudo-Anosov mapping class generates an infinite metacyclic subgroup of $\text{Mod}(S_g)$ with a nontrivial periodic mapping class. As applications of our main results, we establish the existence of infinite metacyclic subgroups of $\text{Mod}(S_g)$ isomorphic to $\mathbb{Z}\rtimes \mathbb{Z}_m, \mathbb{Z}_n \rtimes \mathbb{Z}$, and $\mathbb{Z} \rtimes \mathbb{Z}$. Furthermore, we derive bounds on the order of a nontrivial periodic generator of an infinite metacyclic subgroup of $\text{Mod}(S_g)$ that are realized. Finally, we show that the centralizer of an irreducible periodic mapping class $F$ is either $\langle F\rangle$ or $\langle F\rangle \times \langle i\rangle$, where $i$ is a hyperelliptic involution. 
\end{abstract}

\maketitle

\section{Introduction}
\label{sec:intro}
Let $\map(S_g)$ be the mapping class group of the closed orientable surface $S_g$ of genus $g\geq 2$. A metacyclic group is an extension of a cyclic group by a cyclic group. Given $F,G\in \map(S_g)$, it is natural to ask the following question: Can one derive necessary and sufficient conditions under which $F$ and $G$ generate a metacyclic subgroup of $\map(S_g)$? Ivanov (see \cite[Theorem 7.5A]{ivanov2}) derived necessary and sufficient conditions under which two pure mapping classes commute in $\map(S_g)$. Subsequently, the finite abelian subgroups of $\map(S_g)$ have been extensively studied~\cite{wootton,harvey,ab_bound}. Furthermore, in \cite{dhanwani, sanghi1, sanghi2}, the question (posed earlier) has been answered in the affirmative for finite metacyclic subgroups of $\map(S_g)$ up to conjugacy of their generators. Moreover, it was shown in \cite{sanghi1} that for $g\geq 5$, $\map(S_g)$ has an infinite metacyclic subgroup generated by a bounding pair map and an involution. Taking inspiration from these works, in this paper, we settle this question for infinite metacyclic subgroups of $\map(S_g)$. 

A \textit{multicurve} in $S_g$ is a nonempty collection of isotopy classes of pairwise disjoint essential simple closed curves. A left-handed (or positive) Dehn twist about a simple closed curve $c$ will be denoted by $T_c$. Given a multicurve $C=\{c_1,c_2, \dots, c_{\ell}\}$ in $S_g$ and nonzero integers $q_i$, for $ 1 \leq i \leq \ell$, a mapping class of the form $T_{c_1}^{q_1}T_{c_2}^{q_2}\cdots T_{c_{\ell}}^{q_{\ell}}$ is said to be a \textit{multitwist} about $C$. The Nielsen-Thurston classification \cite{thurston2} asserts that each mapping class in $\map(S_g)$ is either periodic, reducible, or pseudo-Anosov. Furthermore, a pseudo-Anosov mapping class is neither periodic nor reducible. The intersection of all maximal reduction systems of a reducible mapping class $F$ is called its \textit{canonical reduction system}, which we denote by $\C(F)$.

Let $F\in \map(S_g)$ be an infinite order reducible mapping class. Let $\C(F)=\{c_1,c_2,\dots,c_{\ell}\}$ be the canonical reduction system for $F$ and $N$ be an $F$-invariant closed regular neighborhood of $\C(F)$. Let $n$ be the least positive integer such that $F^n$ fixes each path component of $\overline{S_g\setminus N}$. Then, as a consequence of the Nielsen-Thurston classification \cite{thurston2}, there exist $s\in \N \cup \{0\}$ and $q_i\in \Z\setminus \{0\}$ such that
\begin{equation}
\label{eqn:can_form}
F^n=T_{c_1}^{q_1}T_{c_2}^{q_2}\cdots T_{c_{\ell}}^{q_{\ell}}\eta_1(F_1)\eta_2(F_2)\cdots \eta_s(F_s)
\end{equation}
with $F_i\in \map(R_i)$ is either periodic or pseudo-Anosov, where $R_i$ is a path component of $\overline{S_g\setminus N}$ and $\eta_i:\map(R_i)\to\map(S_g)$ is the natural inclusion map. For $1\leq j\leq s$, $F_j$'s (or $\eta_j(F_j)$'s) will be called the \textit{canonical components} of $F$. The product $T_{c_1}^{q_1}T_{c_2}^{q_2}\cdots T_{c_{\ell}}^{q_{\ell}}$ appearing in (\ref{eqn:can_form}) will be called the \textit{multitwist component} of $F$. The decomposition of the form (\ref{eqn:can_form}) will be called the \textit{canonical decomposition (or the Nielsen decomposition)} of $F$. Without loss of  generality, we assume that $F_1,F_2,\dots, F_{s'}$ are periodic canonical components, where $s'\leq s$. The integer $n\cdot\lcm(|F_1|,|F_2|,\dots, |F_{s'}|)$ will be called the \textit{degree} of $F$. For a multicurve $C$, the cut surface obtained by capping the boundary components of $\overline{S_g\setminus N}$ by marked disks will be denoted by $S_g(C)$, where $N$ is a closed regular neighborhood of $C$.

Suppose that $F,G \in \map(S_g)$ generate an infinite metacyclic subgroup of $\map(S_g)$ such that $\langle F \rangle \lhd \langle F,G \rangle$. Then it follows that $F$ and $G$ satisfy the relation $G^{-1}FG = F^k$, for some nonzero integer $k$. Hence, the group $\langle F,G \rangle$ is a semidirect product of $\langle F\rangle$ and $\langle G \rangle$, and will be denoted by $\langle F \rangle \rtimes_k \langle G \rangle$. In Section~\ref{sec:infinite_metacyclic}, we derive the main results of this paper. To begin with, in Subsection~\ref{subsec:infinite_metacyclic_pa}, we derive necessary and sufficient conditions for the existence of infinite metacyclic subgroups of $\map(S_g)$ with a pseudo-Anosov generator depending upon the Nielsen-Thurston type of the other generator (see Theorem \ref{thm:main_thm2}). We achieve this by analyzing its invariant foliations and the dilatation homomorphism (see~\cite{mccarthy}). In particular, we have given necessary and sufficient conditions under which a pseudo-Anosov mapping class $F$ forms an infinite metacyclic subgroup $\langle F,G \rangle$ with a nontrivial periodic mapping class $G$ such that $\langle F\rangle \lhd \langle F,G \rangle$. Furthermore, for other types of $G$, we have the following main result.

\begin{thm*}
\label{mainthm1}
For $g\geq 2$, consider nontrivial mapping classes $F,G \in \map(S_g)$. Let $\langle F,G \rangle$ is metacyclic with $\langle F\rangle \lhd \langle  F,G \rangle$. Then the following statements hold.
\begin{enumerate}[(i)]
\item If $F$ is a pseudo-Anosov, then $G$ cannot be an infinite order reducible mapping class.
\item If $F$ and $G$ are pseudo-Anosov, then $\langle F,G \rangle$ is abelian. Furthermore, either $\langle F,G\rangle\cong \Z$ or $\langle F,G\rangle \cong \mathbb{Z}_n\times \mathbb{Z}$ for some $n\in \N$.
\item Let $G$ be pseudo-Anosov and $\langle F,G\rangle$ is non-abelian. Then $F$ is a reducible mapping class of finite order. 
\end{enumerate}
\end{thm*}

In Subsection \ref{subsec:infinite_metacyclic_reducible}, by decomposing each reducible generator into its canonical components, we obtain necessary and sufficient conditions under which two reducible elements of $\map(S_g)$ form an infinite metacyclic subgroup. In this direction, we have our second main result (see Theorem \ref{thm:main_thm1}) which generalizes a result of Ivanov (see \cite[Theorem 7.5A]{ivanov2}).

\begin{thm*}
\label{mainthm2}
For $g\geq 2$, let $F,G \in \map(S_g)$ be two nontrivial mapping classes such that at least one of $F$ or $G$ is of infinite order and neither $F$ nor $G$ is pseudo-Anosov. Assume that $F,G$ have degrees $n,m$, with multitwist components $$T_{c_1}^{q_1}T_{c_2}^{q_2}\cdots T_{c_{\ell}}^{q_{\ell}} \text{ and } T_{c'_1}^{q'_1}T_{c'_2}^{q'_2}\cdots T_{c'_{\ell'}}^{q'_{\ell'}}, $$ respectively, where $q_i,q_i'\in \Z\setminus \{ 0 \}$, $\C(F)=\{c_1,c_2,\dots,c_{\ell}\}$, and $\C(G)=\{c'_1,c'_2,\dots,c'_{\ell'}\}$. Then $\langle F,G \rangle$ is an infinite metacyclic subgroup with $\langle F \rangle \lhd \langle F,G \rangle$ if and only if the following conditions hold.
\begin{enumerate}[(i)]
\item $\C(F)\cup \C(G)$ is a multicurve.
\item If $F$ is periodic with $G^{-1}FG=F^k$, then $k^m\equiv 1\pmod n$.
\item Define $A_i:=\{c_j\in \C(F)~|~q_j=q_i\}$, $B_i:=\{c_j\in \C(F)~|~q_j=kq_i\}$, and $C_i:=\{c'_j\in \C(G)~|~q'_j=q'_i\}$. Then $G(A_i)=B_i$, $G(B_i)=A_i$, and $F(C_i)=C_i$ for every $i$.
\item For every path component $R$ of $S_g(\mathcal{C}(F)\cup \mathcal{C}(G))$, then $G_r^{-1}F_rG_r=F_r^{k^{p_r}}$, where $G_r, F_r\in \map(R)$ are induced by $G,F$, respectively, and $p_r$ is the size of orbit of $R$ under $G$.
\item For two path components $R,S$ of $S_g(\mathcal{C}(F)\cup \mathcal{C}(G))$ such that $G(R)=S$, then $F_r^k$ is conjugate to $F_s$, where $F_r\in \map(R), F_s\in \map(S)$ are induced by $F$.
\end{enumerate}
\end{thm*}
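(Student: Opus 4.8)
The plan is to prove both implications by reducing the global identity $G^{-1}FG=F^k$ to conditions on the pieces obtained by cutting $S_g$ along $\Sigma:=\C(F)\cup\C(G)$, in the spirit of Ivanov's commutation criterion \cite{ivanov2}, while carefully tracking the ``scaling by $k$'' produced by the conjugation. Throughout I take $\langle F\rangle\lhd\langle F,G\rangle$ with $G^{-1}FG=F^k$, the defining metacyclic relation. First a preliminary reduction: I claim $\langle F,G\rangle$ is \emph{reducible}, i.e.\ fixes a nonempty multicurve. Otherwise, being an infinite solvable, hence virtually abelian (Birman--Lubotzky--McCarthy), irreducible subgroup, it is virtually cyclic and generated up to finite index by a pseudo-Anosov class $\phi$. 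Then some nonzero power of the infinite-order generator lies in $\langle\phi\rangle$ and is pseudo-Anosov, forcing $F$ or $G$ to be pseudo-Anosov (powers of periodic or reducible classes are again periodic or reducible, never pseudo-Anosov), contrary to hypothesis. Hence $\langle F,G\rangle$ is reducible.

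Next I establish mutual invariance and condition (i). Since canonical reduction systems are conjugation-equivariant and invariant under nonzero powers and inverses, $G^{-1}(\C(F))=\C(G^{-1}FG)=\C(F^k)=\C(F)$, so $G$ preserves $\C(F)$. If $F$ has infinite order then conjugation by $G$ is an automorphism of $\langle F\rangle\cong\Z$, forcing $k=\pm1$; if $F$ is periodic of order $n$ then $k\in(\Z/n)^{\times}$. In both cases pick $t$ with $G^t$ centralizing $F$, so that $\langle F,G^t\rangle$ is abelian of finite index; since commuting classes preserve each other's canonical reduction systems and $\C(G^t)=\C(G)$, we obtain $F(\C(G))=\C(G)$. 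Applying Ivanov's criterion \cite{ivanov2} to $\langle F,G^t\rangle$ shows $\C(F)$ and $\C(G)$ have zero geometric intersection, so $\Sigma$ is a multicurve, giving (i); by construction $\Sigma$ is invariant under both $F$ and $G$.

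Cutting along $\Sigma$ and using the cutting homomorphism whose kernel is the group of multitwists about $\Sigma$ \cite{ivanov2,dhanwani,sanghi1}, a class preserving $\Sigma$ is recorded by its twist coefficients on the annuli of $\Sigma$ together with the induced maps on the complementary pieces $R$, compatibly with the permutation of the pieces; then $G^{-1}FG=F^k$ holds if and only if it holds factorwise. Comparing multitwist components of $G^{-1}F^nG=F^{kn}$ via $G^{-1}T_{c_i}G=T_{G^{-1}(c_i)}$ shows $G(c_j)=c_i$ forces $q_i=kq_j$; grouping curves by twist coefficient yields $G(A_i)=B_i$, $G(B_i)=A_i$, and the symmetric computation for $G$ (now legitimate because $F$ preserves $\C(G)$) gives $F(C_i)=C_i$, which is (iii). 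On a piece $R$ with $G$-orbit of size $p_r$, passing to first-return maps turns $G^{-p_r}FG^{p_r}=F^{k^{p_r}}$ into $G_r^{-1}F_rG_r=F_r^{k^{p_r}}$, which is (iv); when $F_r$ is pseudo-Anosov this is exactly the situation resolved by our first main result (Theorem~\ref{mainthm1}). Comparing induced maps on adjacent pieces $R,S$ with $G(R)=S$ gives $F_r^k$ conjugate to $F_s$, which is (v). Finally (ii) is the arithmetic trace in the periodic case: iterating the relation $m$ times gives $G^{-m}FG^m=F^{k^m}$, and since the pure canonical power $G^m$ centralizes the periodic class $F$, we get $F^{k^m}=F$, i.e.\ $k^m\equiv1\pmod n$.

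For the converse, condition (i) allows cutting along $\Sigma$, and conditions (ii)--(v) say precisely that the tuple associated to $G^{-1}FG$ matches that of $F^k$ under the cutting homomorphism on every factor; reassembling yields $G^{-1}FG=F^k$, so $\langle F\rangle$ is normal with cyclic quotient generated by the image of $G$, making $\langle F,G\rangle$ metacyclic, and infinite since one generator has infinite order. I expect the main obstacle to be the second step: the normalizer principle gives only $G(\C(F))=\C(F)$ for free, and obtaining $F(\C(G))=\C(G)$ together with the disjointness in (i) is what forces the detour through the finite-index abelian subgroup $\langle F,G^t\rangle$ and Ivanov's criterion, after the reducibility reduction of the first step.
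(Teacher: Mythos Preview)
Your approach is correct and structurally parallel to the paper's: both compare multitwist components of $G^{-1}F^nG=F^{kn}$ to obtain condition (iii) and the invariance $G(\C(F))=\C(F)$, then restrict the relation $G^{-1}FG=F^k$ to complementary pieces for (iv) and (v), and both converses reassemble the global relation from the local ones. Where you differ is in two places. First, your preliminary BLM/virtually-cyclic step showing $\langle F,G\rangle$ is reducible is correct but never used downstream; the paper does not need it, since all that is required is $G(\C(F))=\C(F)$, which follows directly from $\C(G^{-1}FG)=\C(F^k)=\C(F)$. Second, for condition (i) the paper is more direct: once $\C(F)$ is a reduction system for $G$, the fact that $\C(G)$ is the intersection of \emph{all} maximal reduction systems of $G$ puts $\C(G)$ inside a maximal system containing $\C(F)$, so $\C(F)\cup\C(G)$ is a multicurve without invoking Ivanov's commutation criterion or passing to a finite-index abelian subgroup. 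Your detour works, but it is more machinery than needed. Finally, your ``symmetric computation'' for $F(C_i)=C_i$ should be made explicit: it is not literally symmetric, since there is no relation $F^{-1}GF=G^{?}$; rather, one compares multitwist components in $F(G^{tm})F^{-1}=G^{tm}$ for a suitable $t$ making $G^t$ central (the paper takes $t=2$ when $F$ has infinite order and $t=|k|$, the multiplicative order of $k$ mod $n$, when $F$ is periodic).
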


\noindent The following result is a direct consequence of Theorem \ref{mainthm2}.
\begin{cor*}
For $g\geq 2$, let $F,G \in \map(S_g)$ be two nontrivial mapping classes such that at least one of $F$ or $G$ is of infinite order and neither $F$ nor $G$ is pseudo-Anosov. Let $\langle F,G\rangle$ be an infinite metacyclic subgroup of $\map(S_g)$ with $\langle F\rangle \lhd \langle F,G \rangle$. Then the following statements hold.
\begin{enumerate}[(i)]
\item $F$ and $G$ are reducible mapping classes.
\item If $F,G$ are of infinite order such that $G$ is of odd degree, then $\langle F,G \rangle$ is abelian.
\item If $G$ is of infinite order of degree $1$, then $\langle F,G \rangle$ is abelian.
\end{enumerate}
\end{cor*}

\noindent By applying our main theorems, we have shown that infinite metacyclic subgroups of $\map(S_g)$ are abundant. In general, we have established that $\map(S_g)$ has infinite metacyclic subgroups isomorphic to $\Z_n \rtimes_k \Z$, $\Z\rtimes_k \Z_n$, and $\Z\rtimes_k \Z$. We have constructed several explicit examples (see Section \ref{sec:infinite_metacyclic} - \ref{sec:application}) of such subgroups.

In Section~\ref{sec:application}, we derive several other applications of our main results. In Subsection \ref{subsec:level_m}, we obtain the following characterization of the infinite metacyclic subgroups of level $m$ subgroups $\map(S_g)[m]$ of $\map(S_g)$ for $m \geq 3$.

\begin{prop*}
For $g\geq 2$ and $m\geq 3$, let $F,G \in\map(S_g)[m]$ be two nontrivial mapping classes. Then $\langle F,G\rangle$ is metacyclic with $\langle F\rangle\lhd\langle F,G\rangle$ if and only if the following hold.
\begin{enumerate}[(i)]
\item $F$ and $G$ are infinite order reducible mapping classes that commute.
\item $\C(F)\cup \C(G)$ is a multicurve.
\item The nontrivial canonical components of $F$ and $G$ are pseudo-Anosov mapping classes.
\item The nontrivial canonical components of $F$ and $G$ with the same support generate a cyclic group.
\end{enumerate}
\end{prop*}

\noindent Moreover, when $g \geq 3$, we show the existence of non-abelian infinite metacyclic subgroups in $\map(S_g)[2]$. The following construction is motivated by a family of Penner-type pseudo-Anosov mapping classes described in~\cite{chris}.
\begin{cor*}
For $g \geq 3$, there is an infinite metacyclic subgroup of $\langle F,G \rangle <\map(S_g)[2]$ isomorphic to $\Z \rtimes_{-1}\Z_2$, where $F$ is a Penner-type pseudo-Anosov and $G$ is a hyperelliptic involution.
\end{cor*}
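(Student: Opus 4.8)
The plan is to realize the group as $\langle F,\iota\rangle$, where $\iota$ is the hyperelliptic involution and $F$ is a Penner-type pseudo-Anosov map that $\iota$ inverts, and then to invoke Theorem~\ref{mainthm1}(i)(c)(2). First recall that $\iota$ acts on $H_1(S_g;\Z)$ as $-I$, hence trivially on $H_1(S_g;\Z/2\Z)$; thus $\iota\in\map(S_g)[2]$ for every $g$, and $\iota$ is a nontrivial periodic class of order $2$. So the entire construction reduces to producing a Penner-type pseudo-Anosov $F\in\map(S_g)[2]$ whose invariant measured foliations are exchanged by $\iota$, since Theorem~\ref{mainthm1}(i)(c)(2) then yields that $\langle F,\iota\rangle$ is non-abelian metacyclic.

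To build such an $F$, I would first exhibit multicurves $A=\{a_1,\dots,a_r\}$ and $B=\{b_1,\dots,b_s\}$ that together fill $S_g$, each consisting of pairwise disjoint curves, and that are interchanged by $\iota$, i.e.\ $\iota(A)=B$ and $\iota(B)=A$. Such symmetric pairs are read off from the quotient picture $S_g/\iota=S^2$ with $2g+2$ branch points: a simple closed curve on $S^2$ enclosing an even number of branch points lifts to a pair of disjoint curves swapped by $\iota$, and one arranges finitely many such curves so that their lifts fill. Setting $F_0:=T_{a_1}\cdots T_{a_r}T_{b_1}^{-1}\cdots T_{b_s}^{-1}=:T_AT_B^{-1}$, Penner's criterion guarantees $F_0$ is pseudo-Anosov, and since $\iota$ is orientation-preserving with $\iota(A)=B$, $\iota(B)=A$,
\[
\iota F_0\iota^{-1}=\iota\, T_A T_B^{-1}\,\iota^{-1}=T_B T_A^{-1}=(T_A T_B^{-1})^{-1}=F_0^{-1}.
\]
Consequently $\iota$ carries the stable foliation of $F_0$ to the stable foliation of $F_0^{-1}$; that is, $\iota$ exchanges the two invariant measured foliations $\F^s,\F^u$ of $F_0$.

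It remains to push the pseudo-Anosov factor into the level-$2$ subgroup while keeping both the inversion relation and the Penner form. Since $\map(S_g)/\map(S_g)[2]\cong\mathrm{Sp}(2g,\Z/2\Z)$ is finite, the image of $F_0$ has finite order $d$; put $F:=F_0^{\,d}$. Then $F\in\map(S_g)[2]$, and $F$ is again a Penner-type pseudo-Anosov, being a word in the $T_{a_i}$ and the $T_{b_j}^{-1}$ in which every generator occurs; it shares the invariant foliations of $F_0$, and $\iota F\iota^{-1}=(\iota F_0\iota^{-1})^{d}=F^{-1}$, so $\iota$ still exchanges $\F^s$ and $\F^u$. (Alternatively, one may choose $A,B$ so that $F_0$ already lies in $\map(S_g)[2]$.) By Theorem~\ref{mainthm1}(i)(c)(2), $\langle F,\iota\rangle$ is a non-abelian metacyclic subgroup of $\map(S_g)[2]$. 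Finally, $F$ has infinite order with $\langle F\rangle\lhd\langle F,\iota\rangle$, while $\iota$ is a nontrivial involution lying outside $\langle F\rangle$ (every nonidentity power of $F$ is pseudo-Anosov), so $[\langle F,\iota\rangle:\langle F\rangle]=2$ and the generator of $\langle F\rangle$ is inverted; hence $\langle F,\iota\rangle\cong\Z\rtimes_{-1}\Z_2$.

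The main obstacle is the first step: producing the $\iota$-symmetric filling pair $(A,B)$ over the stated range of genera. The simultaneous filling and symmetry requirements constrain how the enclosing curves on $S^2$ may distribute the $2g+2$ branch points, and it is exactly here that the low genera $g\in\{4,5,8\}$ must be excluded, while every other $g\ge 3$ admits an explicit such pair. I would handle this by drawing the sphere-level configuration once in the generic range and then verifying by hand which small genera fail to support a symmetric filling pair, rather than forcing a single uniform formula across all $g$.
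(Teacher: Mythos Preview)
Your overall strategy coincides with the paper's: exhibit multicurves $A,B$ on $S_g$ with $\iota(A)=B$, $\iota(B)=A$, and $A\cup B$ filling, then take a Penner product $F$ so that $\iota F\iota^{-1}=F^{-1}$, and conclude $\langle F,\iota\rangle\cong\Z\rtimes_{-1}\Z_2$ inside $\map(S_g)[2]$. The algebra you do after assuming such an $(A,B)$ exists is correct.

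The gap is that you never construct $(A,B)$; you explicitly defer this and call it ``the main obstacle,'' but it is the entire content of the argument. Your sphere-level heuristic is also shakier than you indicate: a curve on $S_{0,2g+2}$ enclosing an even number of branch points does lift to two disjoint curves swapped by $\iota$, but if two such curves on the sphere intersect, then \emph{all four} of their lifts intersect one another in general. So declaring one lift of each to lie in $A$ and the other in $B$ does not automatically make $A$ (or $B$) a multicurve; some genuine combinatorics is needed, and you have not supplied it. Relatedly, your assertion that the exclusions $g\in\{4,5,8\}$ arise ``exactly'' as obstructions to a symmetric filling pair is unjustified. In the paper (following Leininger) the construction is done by explicit building blocks: a genus-$3$ piece $S$ and a genus-$1$ bridge $S'$, glued as $mS$, $S+S'+mS$, or $S+S'+mS+S'+S$ for $m\geq 1$, which yields all $g\geq 3$ except $4,5,8$. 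The missing genera are artifacts of that particular block decomposition, not a proven impossibility.

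One further difference worth noting: the paper arranges that every $a\in A$ cobounds a subsurface with a unique $b\in B$, so each $T_aT_b^{-1}$ lies in the Torelli group and hence $F\in\mathcal{I}(S_g)\subset\map(S_g)[2]$ on the nose. Your power-trick $F=F_0^{\,d}$ is a legitimate alternative and preserves the Penner form, but it is unnecessary once the curve system is built with this homological pairing.
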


In Subsection~\ref{subsec:bounds}, we have derived bounds on the order of a nontrivial periodic generator of an infinite metacyclic subgroup of $\map(S_g)$ that are realized (see Propsosition \ref{prop:bounds}). In particular, we have the following result.

\begin{prop*}
For $g\geq 2$, let $F,G\in \map(S_g)$ be two nontrivial mapping classes such that $\langle F,G\rangle$ is an infinite metacyclic subgroup with $\langle F \rangle \lhd \langle F,G \rangle$.
\begin{enumerate}[(i)]
\item Let $F$ be a pseudo-Anosov mapping class and $G$ be a periodic mapping class.
\begin{enumerate}
\item If $\langle F,G\rangle$ is abelian, then $2\leq |G|\leq 2g$. 
\item If $\langle F,G\rangle$ is non-abelian, then $2\leq |G|\leq 4g$.  
\end{enumerate}
\item Let $F$ be a reducible mapping class of infinite order and $G$ be a periodic mapping class.
\begin{enumerate}
\item If $\langle F,G\rangle$ is abelian, then $2\leq |G|\leq 2g+2$.
\item If $\langle F,G\rangle$ is non-abelian, then $2\leq |G|\leq 2g$.
\end{enumerate}
\item If $F$ is periodic and $\langle F,G\rangle$ is non-abelian, then $3\leq |F|\leq 2g+2$.
\end{enumerate}
Moreover, all of the above bounds are realized.
\end{prop*}

In Subsection \ref{subsec:elements_type}, we describe pseudo-Anosovs in $\map(S_g)$ which can be written as a product of two nontrivial periodic mapping classes of the same order.

\begin{cor*}
Let $\langle F,G \rangle < \map(S_g)$ be a non-abelian infinite metacyclic subgroup with $\langle F \rangle \lhd \langle F,G \rangle$, where $F$ is a pseudo-Anosov and $G$ is nontrivial periodic. Then, for integers $i,j$ such that $i$ is odd and $j$ is even, $G^iF^j$ is conjugate to $G^i$. In particular, $GF^2$ is conjugate to $G$, and therefore, $F^2$ can be written as a product of two nontrivial periodic mapping classes of the same order.
\end{cor*}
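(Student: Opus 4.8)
The plan is to first extract the defining relation of the group and then produce an explicit conjugator built from the square root. Since $\langle F,G\rangle$ is non-abelian with $F$ pseudo-Anosov, $G$ periodic, and $\langle F\rangle\lhd\langle F,G\rangle$, we have $G^{-1}FG=F^{k}$ for some $k$; as $F$ and $F^{k}$ are conjugate pseudo-Anosov classes they share a dilatation, forcing $|k|=1$, and non-abelianness gives $k=-1$. (Equivalently, by Theorem~\ref{mainthm1}(i)(c)(2), $G$ exchanges the invariant measured foliations of $F$.) Hence for odd $i$ one has $G^{-i}FG^{i}=F^{-1}$, i.e. $F^{j}G^{i}=G^{i}F^{-j}$ and $G^{i}F^{j}=F^{-j}G^{i}$. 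A first consequence, obtained by a direct computation, is that $(G^{i}F^{j})^{2}=G^{i}(F^{j}G^{i})F^{j}=G^{2i}F^{-j}F^{j}=G^{2i}$; since $G$ is periodic this shows that $G^{i}F^{j}$ is itself periodic, so the assertion that $G^{i}F^{j}$ is conjugate to $G^{i}$ is at least consistent at the level of the Nielsen--Thurston type.

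The heart of the argument is to realise this conjugacy through the square root itself. Write $H^{2}=F^{j}$. A root of a pseudo-Anosov is pseudo-Anosov, and the invariant foliations of $H$ coincide with those of $H^{2}=F^{j}$, hence with those of $F$; since $G$ exchanges the latter, it exchanges the foliations of $H$ as well. The key relation I aim to establish is $G^{-1}HG=H^{-1}$, equivalently $HG^{i}=G^{i}H^{-1}$ for odd $i$. Granting this, the computation is immediate: $H(G^{i}F^{j})H^{-1}=HG^{i}H^{2}H^{-1}=HG^{i}H=(G^{i}H^{-1})H=G^{i}$, so $G^{i}F^{j}$ is conjugate to $G^{i}$ via $H$.

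To obtain $G^{-1}HG=H^{-1}$ I would argue exactly as for $F$: since $G^{-1}H^{2}G=G^{-1}F^{j}G=F^{-j}=H^{-2}$, the class $G^{-1}HG$ is a square root of $H^{-2}$, and analysing expansion factors through the dilatation homomorphism of~\cite{mccarthy} shows that the foliation-exchanging class $G$ inverts the dilatation of $H$, so $G^{-1}HG$ and $H^{-1}$ carry the same ordered pair of foliations with the same dilatation. The main obstacle is precisely here: a pseudo-Anosov is pinned down by its measured foliations and its dilatation only up to the finite group of classes that fix both foliations together with their transverse measures, so a priori $G^{-1}HG=H^{-1}Z$ with $Z$ in this finite kernel. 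I expect to eliminate $Z$ either by selecting the square root $H$ inside the (virtually cyclic) stabiliser of the foliation pair, where the dilatation homomorphism is injective modulo this kernel and forces $Z=1$, or by absorbing $Z$ into a modified conjugator; this is the single step that genuinely requires the structure theory of the normaliser of $\langle F\rangle$ rather than formal group relations.

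Finally, for the stated special case the obstacle evaporates: when $j=2$ we may take $H=F$, which is a power of $F$, so $G^{-1}HG=H^{-1}$ is nothing but the relation $G^{-1}FG=F^{-1}$ and no kernel ambiguity arises. Then $F(GF^{2})F^{-1}=(FG)F^{2}F^{-1}=(GF^{-1})F=G$, giving $GF^{2}$ conjugate to $G$; writing $F^{2}=G^{-1}(GF^{2})$ then exhibits $F^{2}$ as a product of $G^{-1}$ and the $G$-conjugate $GF^{2}$, two periodic mapping classes of order $|G|$, as claimed.
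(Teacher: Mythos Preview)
Your approach is the same as the paper's: conjugate $G^iF^j$ by the square root $H$ of $F^j$. The paper's proof is the single line
\[
H(G^iF^j)H^{-1}=HG^iH=G^iH^{1+(-1)^i}=G^i,
\]
which, exactly as you identify, hinges on the relation $G^{-1}HG=H^{-1}$.

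Where you diverge is in treating this relation as an obstacle requiring extra work with the dilatation homomorphism and the finite kernel. In fact you have already cited the tool that closes this gap: Theorem~\ref{thm:main_thm2}(i)(c)(2) applies to \emph{any} pseudo-Anosov, not just to $F$. Since $H$ is a root of the pseudo-Anosov $F^j$, it is itself pseudo-Anosov with the same pair of invariant foliations as $F$; the periodic class $G$ exchanges these foliations, so the theorem yields directly that $\langle H,G\rangle$ is non-abelian metacyclic with $\langle H\rangle$ normal, i.e.\ $G^{-1}HG=H^{-1}$. No residual kernel element $Z$ survives, and your proposed detours (choosing $H$ in the stabiliser, absorbing $Z$ into the conjugator) are unnecessary. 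The paper's one-line computation is then justified, and your argument for the special case $j=2$ via $H=F$ is correct and matches the paper as well.
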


\noindent  As a final application to our theory, in Subsection \ref{subsec:centralizer}, we analyze the centralizers of irreducible periodic mapping classes in $\map(S_g)$ (see Proposition \ref{prop:centralizer}). In particular, we have the following result.
\begin{cor*}
Let $F\in \map(S_g)$ be an irreducible periodic mapping class. Then the centralizer of $F$ in $\map(S_g)$ is either $\langle F \rangle$ or $\langle F\rangle \times \langle i\rangle$, where $i$ is an hypereliiptic involution.
\end{cor*}
   
\section{Preliminaries}
\label{sec:prelim}
For $g\geq 2$, let $S_g$ be the connected closed orientable surface of genus $g$. The \textit{mapping class group} of $S_g$ is the group of path components of $\homeo(S_g)$, and it will be denoted by $\map(S_g)$. The elements of $\map(S_g)$ are called \textit{mapping classes}. 
The Nielsen-Thurston classification \cite{thurston2} asserts that each mapping class in $\map(S_g)$ is either periodic, reducible, or pseudo-Anosov. 

\subsection{Periodic mapping classes}
\label{subsec:periodic}
In view of the Nielsen-Kerckhoff theorem \cite{kerckhoff}, a periodic mapping class $F\in \map(S_g)$ of order $n$ has a representative $\F$ of the same order (known as a \textit{Nielsen representative}) which induces a $\Z_n$-action on $S_g$ via isometries. The \textit{corresponding orbifold} of $F$ is the quotient orbifold $\Orb_{F}:=S_g/\langle \F \rangle$ (see \cite[Chapter 13]{thurston1}), which is homeomorphic to $S_{g_0}$, where $g_0$ is the \textit{orbifold genus} of $\Orb_{F}$. The $\Z_n$-action induces a branched covering $p:S_g \to \Orb_{F}$ with $k$ branch points (or \textit{cone points}) $x_1, \dots, x_k$ in $\Orb_{F}$ of orders $n_1,\dots, n_k$, respectively. The \textit{order} of a cone point $x_i$ is the order of the stabilizer subgroup of any point in the preimage of $x_i$. From orbifold covering space theory, the branch covering $p:S_g\to \Orb_{F}$ corresponds to an exact sequence 
\begin{equation*}
1 \longrightarrow \pi_1(S_g) \xrightarrow{p_*} \pi_1^{orb}(\Orb_{F}) \xrightarrow{\phi} \Z_n \longrightarrow 1.
\end{equation*}

Moreover, $\pi_1^{orb}(\Orb_{F})$ is a Fuchsian group~\cite{katok,macbeath} that has the following presentation:
\begin{equation*}
\label{eqn:fuchsian_present}
\left\langle \alpha_1,\beta_1,\dots,\alpha_{g_0},\beta_{g_0},\gamma_1,\dots,\gamma_k\mid \gamma_1^{n_1}=\dots=\gamma_k^{n_k}=\prod_{i=1}^k\gamma_i\prod_{i=1}^{g_0} [\alpha_i,\beta_i]=1 \right\rangle.
\end{equation*}
The epimomorphism $\phi:\pi_1^{orb}(\Orb_F)\to \Z_n$ (classically known as a \textit{surface kernel map}) is order-preserving on torsion elements and is given by $\phi(\gamma_i)=\F^{(n/n_i)d_i}$, where $\gcd(d_i,n_i) =1$, for $1\leq i\leq k$. The tuple $(g_0;n_1,\dots,n_k)$ is called the \textit{signature} of the quotient orbifold $\mathcal{O}_F$ which we denote by $\Gamma(\Orb_F)$. Each cone point $x_i$ of order $n_i$ in $\mathcal{O}_F$ lifts under $p$ to an orbit of size $n/n_i$ on $S_g$ and the \textit{local rotation} induced by $\Z_n$-action in this orbit is given by $2 \pi d_i^{-1}/n_i$, where $\gcd(d_i,n_i)=1$. Thus, the orbit data of a cyclic action along with the structure of its corresponding orbifold can be compactly encoded as a tuple of integers.
\begin{defn}
\label{defn:data_set}
For $n\geq2$, $g_0\geq0$, and $0\leq r \leq n-1$, a \textit{cyclic data set of degree $n$}, denoted by $n(D)$, is a tuple of the form 
\begin{center}
$D=(n,g_0,r;(d_1,n_1),\dots,(d_k,n_k))$
\end{center}
with the following conditions.
\begin{enumerate}[(i)]
\item $r>0$ if and only if $k=0$, and when $r>0$, then $\gcd(r,n)=1$.
\item $n_i\geq2$,  $n_i\mid n$, $\gcd(d_i,n_i)=1$, $\text{ for all } i$.
\item $\lcm(n_1,\dots,\widehat{n_i},\dots,n_k)=\lcm(n_1,\dots,n_k)$, $\text{ for all } i$.
\item If $g_0=0$, then $\lcm(n_1,\dots,n_k)=n$.
\label{eqn:lcm}
\item $\sum_{i=1}^{k}\frac{n}{n_i}d_i\equiv 0 \pmod n $.
\label{eqn:angle_sum}
\item $\frac{2g-2}{n}=2g_0-2+\sum_{i=1}^{k}\left (1-\frac{1}{n_i}\right)$. \hfill \text{(Riemann-Hurwitz equation)}
\label{eqn:r-h}
\end{enumerate}
The number $g$ determined by the Riemann-Hurwitz equation is the \textit{genus} of the data set and will be denoted by $g(D)$.
\end{defn}

\noindent The quantity $r$ (in Definition \ref{defn:data_set}) will be nonzero if and only if $D$ represents a free rotation of $S_g$ by $2\pi r/n$. We will not include $r$ in the notation of a data set, whenever $r=0$. The significance of the cyclic data set is given in the following proposition due to Nielsen \cite{nielsen} (see also~\cite[Theorem 3.9]{rajeevsarathy5}).
\begin{prop}
Cyclic data sets of degree $n$ and genus $g$ are in one-to-one correspondence with conjugacy classes of periodic mapping classes of order $n$ in $\map(S_g)$.
\end{prop}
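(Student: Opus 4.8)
The plan is to present the correspondence as a pair of mutually inverse assignments—one sending a conjugacy class of a $\Z_n$-action to a data set, the other realizing a data set by an action—and then to check that each respects the relevant equivalence. Starting from a $\Z_n$-action, I would first invoke the Nielsen--Kerckhoff theorem to replace it by an action realized by a finite cyclic group $\langle \F \rangle < \isom(S_g)$ for a suitable hyperbolic metric; this is the step that lets me pass cleanly to the quotient orbifold $\Orb_F = S_g/\langle \F \rangle$. From the branched cover $p : S_g \to \Orb_F$ I read off the orbifold genus $g_0$, the cone points $x_1,\dots,x_k$ with orders $n_i$, and—using the surface-kernel epimorphism $\phi$ with $\phi(\gamma_i) = \F^{(n/n_i)d_i}$—the local rotation integers $d_i$, recording the free case ($k=0$) via $r$. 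This produces a tuple $D=(n,g_0,r;(d_1,n_1),\dots,(d_k,n_k))$.

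The bulk of this first direction is verifying that $D$ satisfies axioms (i)--(vi) of Definition~\ref{defn:data_set}. Conditions (ii) and (vi) are immediate: $n_i\mid n$ and $\gcd(d_i,n_i)=1$ because $\phi$ is order-preserving, while the Riemann--Hurwitz identity is simply the Euler-characteristic count for $p$. Condition (v), $\sum_i (n/n_i)d_i\equiv 0\pmod n$, drops out upon applying $\phi$ to the long relation $\prod_i\gamma_i\prod_j[\alpha_j,\beta_j]=1$, since $\phi$ kills commutators. The $\lcm$ conditions (iii) and (iv) encode surjectivity of $\phi$ onto $\Z_n$ (connectivity of the cover and $|\langle\F\rangle|=n$); indeed, when $g_0=0$ the images $\phi(\gamma_i)$ generate the subgroup of order $\lcm(n_i)$, forcing $\lcm(n_i)=n$, and (iii) is the analogous non-redundancy requirement. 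Finally, since isometric quotients of conjugate actions are orbifold-homeomorphic by an orientation-preserving map, conjugate actions yield the same $D$, so the assignment descends to conjugacy classes.

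For the reverse (realizability) direction I would begin with a tuple $D$ obeying (i)--(vi), take the Fuchsian group $\Lambda$ of signature $(g_0;n_1,\dots,n_k)$ in the presentation above, and define $\phi:\Lambda\to\Z_n$ by $\phi(\gamma_i)=\F^{(n/n_i)d_i}$ (and, when $k=0$, sending a handle generator to a primitive element prescribed by $r$). Axiom (v) makes $\phi$ respect the long relation, hence a well-defined homomorphism; axioms (iii)--(iv) make it surjective. The equality $\gcd(d_i,n_i)=1$ forces $\phi(\gamma_i)$ to have order exactly $n_i$, so $\phi$ is injective on each $\langle\gamma_i\rangle$; since every torsion element of $\Lambda$ is conjugate to a power of some $\gamma_i$, this makes $\ker\phi$ torsion-free, hence a surface group $\pi_1(S_{g'})$, and (vi) identifies $g'=g$. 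The deck action of $\Z_n=\Lambda/\ker\phi$ on the associated cover is then the action realizing $D$.

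The genuinely delicate part, and where I expect the main obstacle, is the injectivity of the correspondence: two actions with the same data set must be conjugate in $\map(S_g)$. The orbit data pins down $\phi$ only up to the action of $\aut(\Lambda)$ and of automorphisms of the orbifold, so the argument I would run is Nielsen's—any two surface-kernel epimorphisms with the same signature and the same collection of rotation numbers $\{(d_i,n_i)\}$ differ by a \emph{geometric} automorphism of $\Lambda$, i.e.\ one induced by an orientation-preserving self-homeomorphism of $\Orb_F$. This is an orbifold Dehn--Nielsen--Baer statement resting on Zieschang-type results that realize the relevant automorphisms of $\pi_1^{orb}$ permuting peripheral classes compatibly with their rotation data; lifting such a homeomorphism through the cover conjugates one action to the other. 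I would handle the free case $k=0$ and the sphere case $g_0=0$ separately, as the realization of automorphisms and the role of $r$ differ there, and would track orientations throughout so that only orientation-preserving conjugacies appear, matching the convention in $\map(S_g)$.
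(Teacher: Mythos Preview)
The paper does not supply its own proof of this proposition: it is stated with attribution to Nielsen \cite{nielsen} (and the reference \cite[Theorem 3.9]{rajeevsarathy5}) and then used as a black box. Your sketch is therefore not competing with any argument in the paper; rather, you have written out the classical Nielsen/Harvey proof that those citations point to, and your outline of both directions---reading off the data from the surface-kernel epimorphism, and realizing a data set via a torsion-free kernel of a Fuchsian group---is the standard and correct route.

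One small correction to your bookkeeping: axiom (iii), the stability of $\lcm(n_1,\dots,n_k)$ under deletion of any $n_i$, is not really a surjectivity statement. It follows instead from (ii) and (v): applying $\phi$ to the long relation gives $(n/n_i)d_i \equiv -\sum_{j\neq i}(n/n_j)d_j \pmod n$, and since each summand on the right has order $n_j$ in $\Z_n$, the right-hand side lies in the subgroup of order $\lcm_{j\neq i}(n_j)$; hence $n_i$ divides that $\lcm$. Your attribution of (iv) to surjectivity when $g_0=0$ is correct. Apart from this, and the usual implicit convention that the multiset $\{(d_i,n_i)\}$ rather than the ordered tuple is what is being compared (without which the bijection statement is slightly imprecise, though the paper is equally silent on this), your plan is sound and matches what one finds in the cited sources.
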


\noindent From here on, a periodic mapping class $F$ and its associated cyclic action $\F$ up to conjugacy will be represented by its corresponding data set, which we denote by $D_F$ and $D_{\F}$, respectively. The corresponding orbifold of $F$ will also be denoted by $\Orb_{\F}$.

We now state some results concerning nontrivial periodic mapping classes which will be used later. The following result due to Gilman \cite{gilman} characterizes irreducible periodic mapping classes $F\in \map(S_g)$ based on the corresponding orbifold $\Orb_F$.

\begin{theorem}
\label{thm:gilman}
For $g\geq 2$, let $F\in \map(S_g)$ be a nontrivial periodic mapping class. Then $F$ is irreducible if and only if $\Orb_F$ is a sphere with $3$ cone points.
\end{theorem}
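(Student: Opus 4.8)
The plan is to translate the reducibility of the periodic mapping class $F$ into the existence of an essential simple closed curve on the quotient orbifold $\Orb_F$, and then to classify which orbifolds admit no such curve. By the Nielsen--Kerckhoff theorem, $F$ has a finite-order isometric representative $\F$ generating a $\Z_n$-action with branched quotient $p:S_g\to \Orb_F$, which by orbifold covering theory corresponds to the finite-index inclusion $p_*:\pi_1(S_g)\hookrightarrow \pi_1^{orb}(\Orb_F)$ into the Fuchsian group with the presentation recalled above. I will use this covering to set up a dictionary between $\F$-invariant multicurves on $S_g$ and essential simple closed curves on $\Orb_F$, where a curve is essential if it is neither null-homotopic nor bounds a disk containing at most one cone point.

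First I would establish the correspondence in both directions. For the descending direction, suppose $F$ is reducible; after isotoping $\F$ to an isometry, an invariant essential simple closed curve may be realized as a geodesic, and taking the boundary of an $\F$-invariant regular neighborhood of its orbit and projecting under $p$ yields a simple closed curve $\bar c$ on $\Orb_F$ disjoint from the cone points. Since a boundary component $\tilde c$ is essential, $[\tilde c]$ has infinite order in the torsion-free group $\pi_1(S_g)$, and as $p_*$ is injective its image $p_*[\tilde c]$, a power of $[\bar c]$, has infinite order in $\pi_1^{orb}(\Orb_F)$; because null-homotopic curves and curves bounding a one-cone-point disk are torsion, $\bar c$ is essential. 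For the lifting direction, given an essential $\bar c\subset \Orb_F$ avoiding the cone points, the preimage $p^{-1}(\bar c)$ is an $\F$-invariant multicurve whose components are nontrivial in $\pi_1(S_g)$, since $\langle[\bar c]\rangle\cap p_*\pi_1(S_g)$ is a nontrivial infinite cyclic subgroup; hence the lifts are essential and $F$ is reducible. The step needing care here is the behaviour over the branch locus and the stabilizer of a curve in $\langle\F\rangle$, which I would control by working with boundaries of invariant neighbourhoods so that the relevant curves stay disjoint from the cone points.

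Second, I would classify the closed orientable $2$-orbifolds with no essential simple closed curve. If the underlying surface $S_{g_0}$ has $g_0\geq 1$, a nonseparating curve is essential; and if $g_0=0$ with $k\geq 4$ cone points, a curve separating the cone points into two groups of size at least two is essential. Hence the orbifolds with no essential curve are precisely the spheres carrying at most three cone points.

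Finally, I would combine this with the data-set constraints for an action on $S_g$ with $g\geq 1$. Setting $g_0=0$ in the Riemann--Hurwitz equation of Definition~\ref{defn:data_set} gives $\frac{2g-2}{n}=-2+\sum_{i=1}^{k}\bigl(1-\tfrac{1}{n_i}\bigr)$; since $g\geq 1$ forces the right-hand side to be nonnegative while each summand lies in $[\tfrac12,1)$, the case $k\leq 2$ is impossible. Thus among valid actions a sphere with at most three cone points must carry exactly three, and together with the curve correspondence this yields that $F$ is irreducible if and only if $\Orb_F$ is a sphere with three cone points. I expect the main obstacle to be the first step, namely making the curve correspondence rigorous across the branch locus and the curve stabilizers; the orbifold classification and the Riemann--Hurwitz bookkeeping are then routine.
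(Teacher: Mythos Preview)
The paper does not prove this theorem at all: it is stated as a result of Gilman with a citation \cite{gilman} and is used as a black box throughout. So there is no ``paper's own proof'' to compare against; your task is really to supply the argument Gilman gave.

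Your outline is the standard one and is essentially correct. The dictionary between $\F$-invariant essential multicurves on $S_g$ and essential simple closed curves on $\Orb_F$ is exactly the right mechanism, and your Riemann--Hurwitz computation correctly eliminates $k\le 2$ when $g\ge 1$ and $g_0=0$. A couple of points that deserve a little more care in a written proof: first, in the lifting direction you should say explicitly why the components of $p^{-1}(\bar c)$ form a multicurve (pairwise non-isotopic), which is most cleanly handled by taking $\bar c$ to be the unique geodesic in its free homotopy class for the hyperbolic (or, for the Euclidean quotient orbifolds arising when $g=1$, flat) orbifold metric, so that the lifts are distinct closed geodesics. Second, in checking that a sphere with exactly three cone points has no essential curve, you should spell out that every simple closed curve on $S^2$ separates it into two disks, one of which necessarily contains at most one of the three cone points. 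These are routine, and with them filled in your argument is complete.
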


\noindent We will now state a useful lemma \cite[Theorem 4.1]{2g+2} due to Kasahara.
\begin{lem}
\label{lem:reducible_bound}
For $g\geq 2$, let $F\in \map(S_g)$ be a nontrivial reducible periodic mapping class. Then $|F|\leq 2g+2$. The upper bound is realized if and only if $g$ is even and $\Gamma(\Orb_{F}) = (0;2,2,g+1,g+1)$. Furthermore, when $|F|< 2g+2$, we have $|F|\leq 2g$. Equivalently, if either $|F|=2g+1$ or $|F|>2g+2$, then $F$ is irreducible.
\end{lem}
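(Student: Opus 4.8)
The plan is to work entirely at the level of the quotient orbifold $\Orb_F$ and its signature $\Gamma(\Orb_F) = (g_0; n_1, \dots, n_k)$, converting the bound $|F| \le 2g+2$ into a statement about the Riemann--Hurwitz equation (condition (vi) of Definition~\ref{defn:data_set}). Writing $n = |F|$ and $\sigma = \sum_{i=1}^k (1 - 1/n_i)$, the Riemann--Hurwitz relation reads $2g - 2 = n(2g_0 - 2 + \sigma)$, so that $n = (2g-2)/(2g_0 - 2 + \sigma)$. To maximize $n$ for fixed $g$ we must minimize the positive quantity $2g_0 - 2 + \sigma$. Since $F$ is \emph{reducible}, Gilman's criterion (Theorem~\ref{thm:gilman}) forbids the configuration $\Orb_F = S^2$ with exactly three cone points; this is the crucial constraint that separates the reducible case from the global maximum $4g+2$ achieved by an irreducible action. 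So the first step is to enumerate, for each admissible $g_0$ and each number of cone points $k$, the smallest possible value of $2g_0 - 2 + \sigma$ subject to the data-set conditions (ii)--(v), while excluding the $(0;n_1,n_2,n_3)$ signatures.

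The heart of the argument is a finite case analysis on $g_0$ and $k$. If $g_0 \ge 1$, then $2g_0 - 2 + \sigma \ge \sigma \ge 0$, and one checks that $2g_0 - 2 + \sigma$ is bounded below well away from the tiny values attainable on the sphere, giving only small $n$. The interesting regime is $g_0 = 0$, where $2g_0 - 2 + \sigma = \sigma - 2 = \sum_{i=1}^k(1 - 1/n_i) - 2$; positivity of $n$ forces $\sum (1 - 1/n_i) > 2$, so $k \ge 3$, and reducibility (via Theorem~\ref{thm:gilman}) forces $k \ge 4$. Among all signatures $(0; n_1, \dots, n_k)$ with $k \ge 4$, one wants to minimize $\sum(1-1/n_i) - 2$ while keeping it positive. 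The minimizing configuration is $k = 4$ with the $n_i$ as small as possible, i.e.\ $(0;2,2,2,2)$ gives $\sigma - 2 = 0$ (excluded, as it forces $n = \infty$/torus data), so the next candidates must be sorted carefully: $(0;2,2,2,3)$ yields $\sigma - 2 = 1/6$ and hence $n = 12(g-1)$, but one must check against conditions (iii)--(v) and the constraint $n_i \mid n$. The plan is to show that after imposing $\lcm(n_1,\dots,n_k) = n$ (condition (iv), since $g_0 = 0$) together with condition (iii), the genuinely optimal reducible signature is $(0;2,2,g+1,g+1)$, giving $\sigma - 2 = 1 - 2/(g+1) = (g-1)/(g+1)$ and therefore
\begin{equation*}
n = \frac{2g-2}{(g-1)/(g+1)} = 2(g+1) = 2g+2.
\end{equation*}

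I would then verify that this signature actually satisfies all the data-set conditions: $\lcm(2,2,g+1,g+1) = g+1$ must equal $n = 2g+2$, which forces $g+1$ even, i.e.\ $g$ \emph{odd} — so I must double-check the parity claim in the statement, since the lemma asserts the bound is realized when $g$ is \emph{even}. This signals that the correct extremal signature and the parity condition are linked through condition (iv) ($\lcm = n$), and getting this bookkeeping exactly right is where the main obstacle lies: I expect the delicate part to be reconciling which signature simultaneously satisfies $\lcm(n_1,\dots,n_k) = n$, the angle-sum condition (v), and reducibility, and pinning down the precise parity of $g$ for which equality holds. Once the extremal signature is isolated, the secondary claims follow by a gap argument: showing that the \emph{next} largest value of $n$ attainable by a reducible action is $2g$ amounts to checking that every other admissible reducible signature gives $2g_0 - 2 + \sigma \ge (g-1)/g$, hence $n \le 2g$. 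Finally, the contrapositive statements ($|F| = 2g+1$ or $|F| > 2g+2 \Rightarrow F$ irreducible) are immediate consequences: any such $n$ cannot arise from a reducible signature, so by Theorem~\ref{thm:gilman} the orbifold must be a thrice-punctured sphere and $F$ is irreducible. Throughout, I would lean on the classification of cyclic data sets and the finiteness of the case analysis rather than on any delicate geometric input, so the proof reduces to a careful, if slightly tedious, optimization over orbifold signatures.
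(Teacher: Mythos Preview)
Your overall strategy is exactly the paper's: rewrite the bound via Riemann--Hurwitz as a minimization of $2g_0-2+\sigma$ over admissible signatures, use Theorem~\ref{thm:gilman} to rule out $(g_0,k)=(0,3)$, and then run a finite case analysis on $(g_0,k)$. The paper's appendix carries out precisely this enumeration (treating $g_0\ge 2$, $g_0=1$, then $g_0=0$ with $k\ge 6$, $k=5$, $k=4$ and successive sub-cases on the $n_i$) to show every reducible signature other than $(0;2,2,g+1,g+1)$ forces $n<2g+1$.

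The parity confusion you flagged is an arithmetic slip, not a structural problem. You wrote $\lcm(2,2,g+1,g+1)=g+1$, but in fact $\lcm(2,2,g+1,g+1)=\lcm(2,g+1)$, which equals $2(g+1)=2g+2$ exactly when $g+1$ is odd, i.e.\ when $g$ is \emph{even}. Thus condition~(iv) of Definition~\ref{defn:data_set} is satisfied for the signature $(0;2,2,g+1,g+1)$ with $n=2g+2$ precisely when $g$ is even, and fails when $g$ is odd; this is exactly what the lemma asserts and what the paper observes. With that corrected, your plan is sound; what remains is simply to execute the case analysis you sketch (and to exhibit an explicit data set such as $(2g+2,0;(1,2),(1,2),(1,g+1),(g,g+1))$ witnessing realizability, as the paper does).
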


\noindent Finally, we state the following assertion which follows from a result of Kulkarni \cite{kulkarni}.

\begin{lem}
\label{lem:4g+1}
There are no periodic mapping classes of order $4g+1$ in $\map(S_g)$.
\end{lem}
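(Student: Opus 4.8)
The plan is to argue by contradiction at the level of cyclic data sets. By the Nielsen correspondence (Proposition following Definition~\ref{defn:data_set}), a periodic mapping class of order $n=4g+1$ would produce a cyclic data set $D=(n,g_0,r;(d_1,n_1),\dots,(d_k,n_k))$ of degree $n$ and genus $g$, so it suffices to show that no such $D$ satisfies Definition~\ref{defn:data_set}. The first step is to pin down $(g_0,k)$ via the Riemann--Hurwitz equation. Since $n=4g+1$ is odd, every cone-point order $n_i$ divides $n$ and is therefore odd and at least $3$, so each term $1-\tfrac{1}{n_i}$ lies in $[\tfrac23,1)$. Substituting $2g-2=\tfrac{n-5}{2}$ into the Riemann--Hurwitz equation gives $2g_0-2+\sum_i(1-\tfrac{1}{n_i})=\tfrac12-\tfrac{5}{2n}<\tfrac12$. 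Because the left-hand side is at least $2$ when $g_0\ge 2$, at least $\tfrac23$ when $g_0=1$ and $k\ge1$, and the free case $k=0$ forces $g=1$, we are driven to $g_0=0$. Then $\sum_i(1-\tfrac{1}{n_i})=\tfrac{5(n-1)}{2n}$, and the two-sided bound $\tfrac{2k}{3}\le \sum_i(1-\tfrac{1}{n_i})<k$, together with $n\ge 9$, forces $k=3$.

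With $g_0=0$ and $k=3$ the orbifold $\Orb_F$ is a sphere with three cone points, so $F$ is in fact irreducible by Theorem~\ref{thm:gilman}. Setting $a_i:=n/n_i$, each $a_i$ is a divisor of $n$ with $a_i\le n/3$, and the Riemann--Hurwitz relation rewrites as the Diophantine equation $a_1+a_2+a_3=\tfrac{n+5}{2}=2g+3$. The decisive simplification comes from conditions (iii)--(iv) of Definition~\ref{defn:data_set}: for divisors of $n$ one has $\lcm(n_i,n_j)=n/\gcd(a_i,a_j)$, so the requirement that the lcm of every pair $\{n_i,n_j\}$ still equals $n$ is equivalent to $a_1,a_2,a_3$ being \emph{pairwise coprime}. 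Being pairwise coprime divisors of $n$, their product divides $n$, whence $a_1a_2a_3\le n=4g+1$.

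Finally I would derive a contradiction from $a_1+a_2+a_3=2g+3$ and $a_1a_2a_3\le 4g+1$, ordering $a_1\le a_2\le a_3$ and splitting on how many $a_i$ exceed $1$. If all three exceed $1$, pairwise coprimality forbids $a_1=a_2=2$, so $a_1\ge2$ and $a_2\ge3$, giving $a_1a_2a_3\ge 6a_3\ge 2(a_1+a_2+a_3)=4g+6>4g+1$, a contradiction. If exactly one $a_i$ equals $1$, say $a_1=1$, then $a_2,a_3$ are coprime odd integers (as $n$ is odd) with $a_2+a_3=2g+2$ and both at least $3$, forcing $a_2a_3\ge 3(2g-1)=6g-3>4g+1$ for $g\ge3$, while $g=2$ admits only $(a_2,a_3)=(3,3)$, which is not coprime. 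The remaining degenerate cases are immediate: $a_1=a_2=1$ gives $a_3=2g+1$, which does not divide $4g+1$, and $a_1=a_2=a_3=1$ gives $g=0$. This exhausts all possibilities and yields the lemma.

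I expect the main obstacle to be bookkeeping rather than depth: correctly extracting that conditions (iii)--(iv) are equivalent to pairwise coprimality of the $a_i$, and cleanly handling the low-genus and $a_i=1$ degeneracies where the uniform inequality $a_1a_2a_3>2(a_1+a_2+a_3)$ breaks down. This self-contained argument reproves Kulkarni's result that $4g+1$ is a gap in the spectrum of orders of periodic mapping classes.
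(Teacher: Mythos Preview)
Your argument is correct. The paper does not actually prove this lemma: it merely records it as a consequence of a result of Kulkarni and gives no details. You, by contrast, supply a self-contained proof entirely within the paper's own cyclic-data-set framework (Definition~\ref{defn:data_set}), so in that sense your proposal is strictly more than what the paper offers.

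A couple of minor remarks on presentation. First, since $n=4g+1$ is odd, every $a_i$ is odd as well, so in the ``all $a_i>1$'' case you in fact have $a_1\ge 3$ and $a_2\ge 5$; your weaker bound $a_1\ge 2,\ a_2\ge 3$ already suffices, but invoking oddness makes the inequality sharper and the narrative cleaner. Second, the key identity $\lcm(n/a_i,n/a_j)=n/\gcd(a_i,a_j)$ for divisors $a_i,a_j\mid n$ is exactly what converts conditions~(iii)--(iv) of Definition~\ref{defn:data_set} into pairwise coprimality of the $a_i$; you state this correctly, and it is worth emphasizing since the subsequent divisibility $a_1a_2a_3\mid n$ is the crux of the contradiction. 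The low-genus and degenerate cases ($g=2$ with $(a_2,a_3)=(3,3)$, $a_1=a_2=1$ giving $a_3=2g+1\nmid 4g+1$, and $a_1=a_2=a_3=1$ giving $g=0$) are all handled correctly.
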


\subsection{Pseudo-periodic mapping classes}
\label{subsec:pp}

Let $F\in \map(S_g)$ be an infinite order reducible mapping class. From here on, we will use the notions of \textit{canonical decomposition} and the \textit{degree} of $F$ as defined in Section~\ref{sec:intro}. A mapping class is said to be \textit{pseudo-periodic} if it is either a nontrivial periodic or of infinite order reducible with only periodic canonical components. Thus, a nontrivial periodic mapping class $F$ will be considered as a pseudo-periodic with $\C(F)=\emptyset$, degree $|F|$, and multitwist component equal to identity. We observe that multitwists are pseudo-periodic mapping classes having trivial periodic canonical components.

In the following example, we construct some infinite order pseudo-periodic mapping classes whose power is a Dehn twist about a simple closed curve.

\begin{exmp}
\label{exmp:compatibility}
Let $F\in \map(S_g)$ be pseudo-periodic mapping class such that $F^n=T_c$. Then $F$ is represented by an $\F \in \homeo(S_g)$ such that $\F(N)=N$, where $N$ is a closed annular neighborhood of $c$. Thus, $\F$ induces a $\Z_n$-action on $S_g(c)$ with two fixed points. Moreover, the sum of induced rotation angles about these fixed points is $2\pi /n$ modulo $2\pi$. Conversely, given nontrivial periodic mapping classes having a (two, in case $c$ is nonseparating) distinguished fixed point such that the sum of induced rotation angles about these fixed points is $2\pi /n$ modulo $2\pi$, one can reverse this process to recover $F$. (We refer the reader to~\cite{rajeevsarathy2, rajeevsarathy3,rajeevsarathy4, rajeevsarathy5} for details.) We illustrate this construction of roots of Dehn twists in Figure \ref{fig2}.
\begin{figure}[h]
\tiny
\begin{subfigure}{\textwidth}
\centering
\labellist
\pinlabel $(3,5)$ at 202, 85
\pinlabel $(3,5)$ at 202, 35
\pinlabel $D$ at 125, 47
\pinlabel $c$ at 285, 60
\endlabellist
\centering
\includegraphics[scale=0.52]{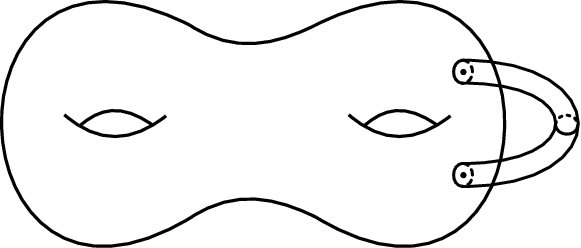}
\caption{The sum of local rotation angles about the two fixed points of cyclic action $D=(5,0;(4,5),(3,5),(3,5))$ associated with the pair $(3,5)$ is $-2\pi/5$ modulo $2\pi$. Hence, the action of $D$ can be extended to a pseudo-periodic mapping class $F\in\map(S_3)$ such that $F^5=T_c^{-1}$, where $c$ is a non-separating curve.}
\label{subfig1}
\end{subfigure}

\begin{subfigure}{\textwidth}
\labellist
\pinlabel $(7,8)$ at 220, 50
\pinlabel $(1,10)$ at 330, 45
\pinlabel $D_1$ at 120, 40
\pinlabel $D_2$ at 425, 40
\pinlabel $c$ at 275, 40
\endlabellist
\centering
\vspace{0.5cm}
\includegraphics[scale=0.52]{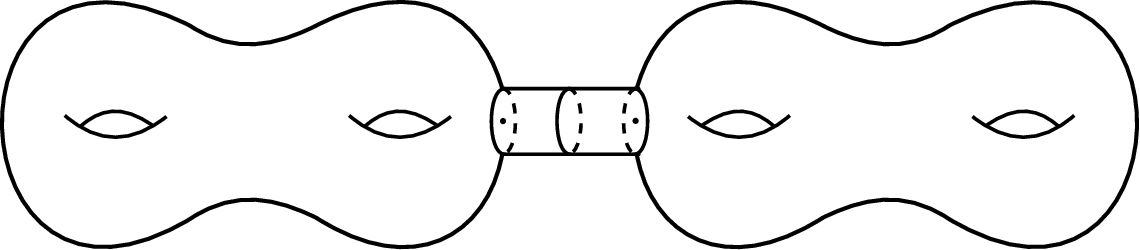}
\caption{The sum of local rotation angles about the fixed points of cyclic actions $D_1=(8,0;(1,2),(5,8),(7,8))$ and $D_2=(10,0;(1,2),(2,5),(1,10))$ associated with the pairs $(7,8)$ and $(1,10)$, respectively, is $-2\pi/40$ modulo $2\pi$. Since $\lcm(8,10)=40$, a pseudo-periodic $F\in \map(S_4)$ can be constructed from $D_1$ and $D_2$ such that $F^{40}=T_c^{-1}$, where $c$ is a separating curve.}
\label{subfig2}
\end{subfigure}
\caption{Construction of a pseudo-periodic mapping classes.}
\label{fig2}
\end{figure}
\end{exmp}

The angle sum condition in Example~\ref{exmp:compatibility} (in the construction of pseudo-periodic) generalizes to a formal ``compatibility condition" between pairs of orbits of one or more cyclic actions (see \cite{kapdi} for more details). 

\begin{defn}
For $i = 1,2$, let $O_i$ be an orbit of cyclic action $D_i$ such that $|O_1| = |O_2|$. Let $k$ be an integer such that $0\leq |k|\leq n/2$, where $n=\lcm(n(D_1),n(D_2))$.
\begin{enumerate}[(i)]
\item We say that $O_1$ and $O_2$ are \textit{trivially $n$-compatible} if $|O_1| = |O_2| = n$ (in this case $n(D_1)=n(D_2)$). 
\item Let the pair $(d_i,n_i)$ correspond to the orbit $O_i$ in the data set $D_i$, where we assume that $(d_i,n_i)=(0,1)$ if $|O_i|=n(D_i)$. We say that the orbits $O_1$ and $O_2$ are $|O_i|$-\textit{compatible with twist factor $k$} if
\begin{equation}
\label{eqn:twist_compa}
\frac{2\pi d_1^{-1}}{n_1}+\frac{2\pi d_2^{-1}}{n_2}\equiv \frac{2\pi k}{n} \pmod{2 \pi} .
\end{equation}
\end{enumerate}
\noindent When the twist factor associated with the compatibility of the $D_i$ is $0$, we simply say that the $D_i$ are $|O_i|$-\textit{compatible}.
\end{defn}

\subsection{Metacyclic groups}
\label{subsec:meta_group}
A group $H$ is said to be a \textit{metacyclic group} if there is a short exact sequence
\begin{equation}
\label{eqn:exact_seq}
1 \rightarrow N \rightarrow H \rightarrow L \rightarrow 1, 
\end{equation}
where $N$ and $L$ are cyclic groups. If a metacyclic group $H$ fits into an exact sequence as in (\ref{eqn:exact_seq}) that splits, then we say that $H$ is a \textit{split metacyclic group}. Thus, the split metacyclic group $H$ is isomorphic to the semidirect product $N \rtimes L$. Given integers $u,n \in \N$, a finite metacyclic group $H$ of order $u \cdot n$ admits the following presentation:
\begin{equation}
\label{eqn:presentation}
H=\langle \F,\G ~ | ~ \F^n = 1, \F^r = \G^u, \G^{-1}\F\G = \F^k\rangle,
\end{equation}
where $r \in \N$, $k \in \Z_n^{\times}$ such that $r \mid n$, $k^u \equiv 1 \pmod{n}$, and $r(k-1) \equiv 0 \pmod{n}$. For integers $m,n\in \N$ and $k\in \Z_n^{\times}$, a split metacyclic group admits the following presentation:
\begin{equation*}
H=\langle \F,\G ~ | ~ \F^n = 1, \G^m=1, \G^{-1}\F\G = \F^k\rangle \cong \Z_n\rtimes_k\Z_m.
\end{equation*}
Metacyclic groups have been completely classified by Hempel in~\cite{hempel}.

An \textit{infinite metacyclic group} is a metacyclic group of infinite order. It is known~\cite[Chapter 7]{hempel} that an infinite metacyclic group admits exactly one of the following presentations:
\begin{equation}
\begin{split}
\langle \F,\G ~ | ~ \G^{-1}\F\G = \F^k \rangle & \cong  \Z \rtimes_{k} \Z, \text{ for } k = \pm 1,\\
\langle \F,\G ~ | ~ \G^{2m} = 1, \G^{-1}\F\G = \F^k \rangle & \cong \Z \rtimes_{k} \Z_{2m}, \text{ for } k = - 1, m \in \N, \text{ and }\\
\langle \F,\G ~ | ~ \F^{n} = 1, \G^{-1}\F\G = \F^k \rangle & \cong  \Z_n \rtimes_{k} \Z, \text{ for } k \in \Z_n^{\times}, n \in \N.
\end{split}
\label{eqn:infinite_pres}
\end{equation}
Throughout this paper, we will only consider non-cyclic (i.e. two-generator) infinite metacyclic groups. As a consequence of the relation $\G^{-1}\F\G=\F^k$ in a metacyclic group $H=\langle \F,\G\rangle$, we have the following elementary lemma.
\begin{lem}
\label{lem:relation}
Let $H = \langle \F, \G \rangle$ be a metacyclic group, where $\G^{-1}\F\G = \F^k$. For integers $i,j$, we have:
\begin{enumerate}[(i)]
\item $\F^i \G^j = \G^{j}\F^{ik^j}$  and
\item $(\G^i \F^j)^{\ell} = \G^{i\ell} \F^{j(1+k^i+k^{2i}+\cdots + k^{i(\ell-2)}+k^{i(\ell-1)})}$.
\end{enumerate}
\end{lem}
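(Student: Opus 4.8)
The plan is to reduce both identities to a single conjugation formula and then bootstrap by induction, since the lemma is entirely a consequence of the defining relation $\G^{-1}\F\G = \F^k$.

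First I would establish the \emph{single-generator} conjugation identity $\G^{-j}\F\G^j = \F^{k^j}$, equivalently $\F\G^j = \G^j\F^{k^j}$, for all $j \geq 0$. This follows by induction on $j$: the base case $j=1$ is the defining relation, and for the inductive step one conjugates $\G^{-j}\F\G^j = \F^{k^j}$ once more by $\G$ and uses that conjugation by $\G^{-1}$ carries $\F^{k^j}$ to $(\F^k)^{k^j} = \F^{k^{j+1}}$. With this in hand, part (i) follows by a second induction, this time on $i$. Writing $\F^i\G^j = \F\cdot(\F^{i-1}\G^j)$, the inductive hypothesis moves $\F^{i-1}$ past $\G^j$ at the cost of the exponent $(i-1)k^j$, and the base identity $\F\G^j = \G^j\F^{k^j}$ absorbs the remaining single $\F$, giving total exponent $k^j + (i-1)k^j = ik^j$, as required.

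For part (ii) I would induct on $\ell$, with part (i) doing the commutation work. Writing $(\G^i\F^j)^{\ell+1} = (\G^i\F^j)^{\ell}(\G^i\F^j)$ and substituting the inductive hypothesis yields $\G^{i\ell}\F^{jS_\ell}\G^i\F^j$, where $S_\ell = 1 + k^i + \cdots + k^{i(\ell-1)}$. Applying part (i) to commute $\F^{jS_\ell}$ past $\G^i$ replaces the exponent by $jk^iS_\ell$, and combining with the trailing $\F^j$ gives $j(k^iS_\ell + 1) = jS_{\ell+1}$, since $k^iS_\ell + 1 = 1 + k^i + \cdots + k^{i\ell}$ telescopes into the next partial sum. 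This closes the induction.

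The only genuine subtlety is the treatment of negative exponents $i$ and $j$, where expressions such as $k^j$ must be interpreted consistently. I would handle this by recalling the structure of metacyclic groups recorded above: either $\F$ has finite order $n$ and $k \in \Z_n^{\times}$, so that $k$ is invertible modulo $n$ and conjugation by $\G^{-1}$ supplies the inverse $k^{-1}$; or $\F$ has infinite order and $k = \pm 1$, so that $k^{-1} = k$. In both regimes the conjugation identity $\G^{-j}\F\G^j = \F^{k^j}$ extends to all $j \in \Z$, whence part (i) holds for arbitrary integers $i,j$, and this in turn validates part (ii) for all $\ell$. Beyond this bookkeeping the argument is a routine double induction, which is why I expect no serious obstacle.
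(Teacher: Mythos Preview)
Your argument is correct. The paper does not actually supply a proof of this lemma: it is stated as an ``elementary lemma'' that follows from the relation $\G^{-1}\F\G = \F^k$, with the details omitted. Your double induction is exactly the routine verification the paper suppresses, and your handling of negative exponents via the dichotomy $k \in \Z_n^{\times}$ versus $k = \pm 1$ (drawn from the classification in~(\ref{eqn:infinite_pres})) is the right way to make sense of $k^j$ for $j < 0$.
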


\subsection{Induced orbifold automorphisms}
Let $\langle \F,\G\rangle$ be a metacyclic subgroup of $\homeo(S_g)$, where $\F$ has finite order. Each cone point $[x]\in \Orb_{\F}$ corresponds to a unique pair of the form $(c_x,n_x)$ in the data set $D_{\F}$ corresponding to $\F$. If $[x]\in \Orb_{\F}$ is not a cone point, then we take $(c_x,n_x)=(0,1)$. As $\langle \F \rangle \lhd H$, it is known~\cite{tucker} that $\G$ would induce a $\bar{\G} \in \homeo(\Orb_{\F})$ that preserves the set of cone points in $\Orb_{\F}$ along with their orders. We will call $\bar{\G}$, \textit{the induced automorphism on $\Orb_{\F}$ by $\G$,} and we formalize this notion in the following definition. 

\begin{defn}
\label{defn:ind_auto}
Let $\F\in \homeo(S_g)$ be a finite order map such that $|F|=n$. We say a $\bar{\G} \in \homeo(\Orb_{\F})$ is an \textit{automorphism of $\Orb_{\F}$} if for $[x],[y] \in \Orb_{\F}$, $k \in \Z_n^{\times}$, and $\bar{\G}([x]) = [y]$, we have 
\begin{enumerate}[(i)]
\item $n_x = n_y$, and 
\item $c_x = kc_y$. 
\end{enumerate} 
\end{defn}

\noindent We denote the group of automorphisms of $\Orb_{\F}$ by $\aut_k(\Orb_{\F})$. When $k=1$, we simply write $\aut(\Orb_{\F})$ instead of $\aut_1(\Orb_{\F})$. In the following lemma, we state some basic properties of induced automorphisms.

\begin{lem}[{\cite[Lemma 2.9]{sanghi1}}]
\label{lem:ind_auto}
Let $\F \in \homeo(S_g)$ be a map of order $n$ and $\G\in \homeo(S_g)$ be a map such that $\G^{-1} \F \G = \F^k$. Then $\G$ induces a $\bar{\G}\in \aut_k(\Orb_{\F})$ such that $$\Orb_{\F}/\langle \bar{\G}\rangle = S_g/\langle \F,\G\rangle.$$ Furthermore, $\G$ has infinite order if and only if $\bar{\G}$ has infinite order. If $|\G|=m$ then,
\begin{enumerate}[(i)]
\item $|\bar{\G}| \text{ divides } |\G|$, and 
\item $|\bar{\G}| < m$ if and only if $\F^{r}=\G^u$, for some $0< r<n$ and $0< u< m$.
\end{enumerate}
\end{lem}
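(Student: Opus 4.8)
The plan is to prove Lemma~\ref{lem:ind_auto} by exploiting the relation $\G^{-1}\F\G = \F^k$ together with orbifold covering-space theory. First I would recall from the discussion preceding Definition~\ref{defn:ind_auto} (and the reference \cite{tucker}) that $\langle \F \rangle \lhd \langle \F, \G \rangle =: H$, so $H$ acts on $S_g$ with $\langle \F \rangle$ normal; hence $\G$ descends to a homeomorphism $\bar{\G}$ of the quotient orbifold $\Orb_{\F} = S_g/\langle \F \rangle$. The relation $\G^{-1}\F\G = \F^k$ is exactly the condition that makes $\bar{\G}$ respect the cone-point data in the sense of Definition~\ref{defn:ind_auto}: at a cone point $[y]$ whose local rotation number encodes $c_y$, the conjugation by $\G$ sends the local $\langle \F \rangle$-action to its $k$-th power, forcing $n_x = n_y$ and $c_x = kc_y$ when $\bar{\G}([x])=[y]$. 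Thus $\bar{\G} \in \aut_k(\Orb_{\F})$. The identification $\Orb_{\F}/\langle \bar{\G} \rangle = S_g/\langle \F, \G \rangle$ then follows from the standard fact that quotients can be taken in stages for a normal subgroup: $(S_g/\langle \F \rangle)/(\langle \F, \G\rangle/\langle \F \rangle) = S_g/\langle \F, \G \rangle$, and $\langle \F, \G\rangle/\langle \F \rangle$ is precisely the group generated by $\bar{\G}$.

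Next I would treat the order statements. For the equivalence ``$\G$ has infinite order iff $\bar{\G}$ has infinite order,'' one direction is immediate since $\bar{\G}$ is a quotient of $\G$, so $\bar{\G}^t = \mathrm{id}$ whenever $\G^t = \mathrm{id}$, giving $|\bar{\G}| \mid |\G|$ and in particular finiteness of $\G$ implies finiteness of $\bar{\G}$; this also yields part (i). For the converse, I would argue that if $\bar{\G}^m = \mathrm{id}$ on $\Orb_{\F}$, then $\G^m$ projects to the identity, so $\G^m$ lies in the deck group $\langle \F \rangle$ of the covering $S_g \to \Orb_{\F}$; writing $\G^m = \F^r$ shows $\G$ has finite order dividing $mn$. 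Hence $\bar{\G}$ of finite order forces $\G$ of finite order, completing the equivalence.

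For part (ii), assume $|\G| = m$. Part (i) gives $|\bar{\G}| \mid m$. The claim is that $|\bar{\G}| < m$ if and only if there exist $0 < r < n$ and $0 < u < m$ with $\F^{r} = \G^u$. I would argue as follows: $|\bar{\G}| = u < m$ means $\bar{\G}^u = \mathrm{id}$ for some $0 < u < m$, which by the covering-space argument above is equivalent to $\G^u \in \langle \F \rangle$, i.e.\ $\G^u = \F^r$ for some $r$. Here $r \not\equiv 0 \pmod n$ (so we may take $0 < r < n$) precisely because $\G^u \neq 1$ for $0 < u < m = |\G|$; conversely if $\G^u = \F^r$ with $0 < u < m$ then $\bar{\G}^u = \mathrm{id}$ forces $|\bar{\G}| \le u < m$.

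The main obstacle I anticipate is making rigorous the cone-point transformation law $c_x = k c_y$ in part of the $\aut_k$ claim, since this requires tracking how conjugation by $\G$ acts on the local rotation numbers $2\pi d_i^{-1}/n_i$ encoded in the data set $D_{\F}$. The key point is that if a point $\tilde{y} \in S_g$ over $[y]$ has $\langle \F \rangle$-stabilizer generated by an element acting by local rotation with multiplier $c_y$, then $\G(\tilde{y})$ lies over $\bar{\G}([y])$ and its stabilizer generator is the $\G$-conjugate of that element, whose local rotation is governed by the relation $\G^{-1}\F\G = \F^k$ and hence picks up the factor $k$. Since this lemma is quoted verbatim from \cite[Lemma 2.9]{sanghi1}, the cleanest route is to cite that source for the detailed verification of the cone-point law while reproducing the covering-space argument for the order statements, which are the parts used directly in what follows.
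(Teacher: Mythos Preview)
The paper does not supply its own proof of this lemma; it is quoted directly from \cite[Lemma~2.9]{sanghi1} and the reader is referred there (and to \cite{dhanwani}) for details. So there is no in-paper argument to compare against.

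Your sketch is the standard and correct one: normality of $\langle \F\rangle$ in $\langle \F,\G\rangle$ lets $\G$ descend to $\bar\G$ on $\Orb_{\F}$; the relation $\G^{-1}\F\G=\F^k$ translates stabilizers and local rotation data by the factor $k$, yielding $\bar\G\in\aut_k(\Orb_{\F})$; the identification $\Orb_{\F}/\langle\bar\G\rangle=S_g/\langle\F,\G\rangle$ is the third isomorphism theorem for group actions; and the order statements follow from the short exact sequence $1\to\langle\F\rangle\to\langle\F,\G\rangle\to\langle\bar\G\rangle\to1$, since $\bar\G^u=\mathrm{id}$ is equivalent to $\G^u\in\langle\F\rangle$. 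Your own observation that the cone-point law $c_x=kc_y$ is the one step needing care, and that citing \cite{sanghi1} is the cleanest route, matches exactly what the paper does.
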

\noindent We the refer the reader to~\cite{dhanwani,sanghi1} for further details on induced orbifold automorphisms.

\subsection{Pseudo-Anosov mapping classes}
\label{subsec:pa}
For $g\geq 2$, let $F\in \map(S_g)$ be \textit{pseudo-Anosov} mapping class. We will now describe a well-known construction of pseudo-Anosov mapping classes due to Penner \cite[Theorem 3.1]{penner}. 

\begin{theorem}
\label{thm:penner}
Let $A=\{a_1,a_2,\dots,a_m\}$ and $B=\{b_1,b_2,\dots, b_n\}$ be multicurves in $S_g$ such that $A \cup B$ fills $S_g$. Any product of positive powers of $T_{a_i}$ and negative powers of $T_{b_j}$, where each $a_i$ and each $b_j$ appear at least once, is a pseudo-Anosov mapping class.   
\end{theorem}

\noindent Note that a collection of simple closed curves $C$ in $S_g$ is said to \textit{fill} $S_g$ if $\overline{ S_g\setminus C}$ is a union of closed disks. Let $F,G\in \map(S_g)$ be nontrivial mapping classes, where $F$ is pseudo-Anosov with stretch factor $\lambda>1$ satisfying the relation $G^{-1}FG=F^k$. Let $(\f_s,\mu_s)$ and $(\f_u,\mu_u)$ be the stable and unstable singular measured foliations of $F$. We will require the  following result due to McCarthy \cite[Lemma 1]{mccarthy} in proving our results.
\begin{lem}
\label{lem:foliation}
Let $F,G \in \map(S_g)$ such that $F$ is pseudo-Anosov mapping class satisfying $G^{-1}FG=F^k$. Then $k=\pm 1$ and there exist a positive real number $\rho$ such that the following conditions hold. 
\begin{enumerate}[(i)]
\item if $k=1$, then $G(\f_s,\mu_s)=(\f_s,\rho^{-1} \mu_s)$ and $G(\f_u,\mu_u)=(\f_u,\rho \mu_u)$, 
\item if $k=-1$, then $G(\f_s,\mu_s)=(\f_u,\rho^{-1} \mu_u)$ and $G(\f_u,\mu_u)=(\f_s,\rho \mu_s)$.  
\end{enumerate}
\end{lem}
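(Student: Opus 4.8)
The plan is to exploit two structural facts about pseudo-Anosov maps: that the stretch factor is a conjugacy invariant, and that the pair of invariant measured foliations is unique up to isotopy and scaling of the transverse measures. First I would record the defining relations $F(\f_s,\mu_s)=(\f_s,\lambda^{-1}\mu_s)$ and $F(\f_u,\mu_u)=(\f_u,\lambda\mu_u)$ with $\lambda>1$, together with the elementary transformation rule for conjugates: for any $\psi\in\homeo(S_g)$, the map $\psi F\psi^{-1}$ is again pseudo-Anosov, with invariant foliations $\psi(\f_s),\psi(\f_u)$ and the \emph{same} stretch factor $\lambda$.

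To pin down $k$, I would compute the stretch factor of both sides of $G^{-1}FG=F^k$. The left-hand side is a conjugate of $F$, hence has stretch factor $\lambda$. On the right, $F^k$ is pseudo-Anosov with stretch factor $\lambda^{|k|}$: for $k>0$ it stretches $\f_u$ by $\lambda^k$, while for $k<0$ it stretches $\f_s$ by $\lambda^{|k|}$, interchanging the roles of stable and unstable. Since the stretch factor is a conjugacy invariant, $\lambda^{|k|}=\lambda$, and as $\lambda>1$ this forces $|k|=1$, i.e. $k=\pm1$.

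Next I would identify the foliations using uniqueness. For $k=1$ we have $G^{-1}FG=F$, so the invariant foliations $G^{-1}(\f_s),G^{-1}(\f_u)$ of $G^{-1}FG$ must coincide projectively with those of $F$; hence $G$ preserves each of $\f_s,\f_u$, scaling the transverse measures, say $G(\f_s,\mu_s)=(\f_s,a\,\mu_s)$ and $G(\f_u,\mu_u)=(\f_u,b\,\mu_u)$ for some $a,b>0$. For $k=-1$, the map $F^{-1}$ has its stable and unstable foliations interchanged, so the same uniqueness argument forces $G$ to swap the two foliations: $G(\f_s,\mu_s)=(\f_u,a\,\mu_u)$ and $G(\f_u,\mu_u)=(\f_s,b\,\mu_s)$.

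Finally, I would show $ab=1$ in both cases, which is precisely what produces the reciprocal scalars $\rho,\rho^{-1}$ in the statement. The clean tool is Thurston's geometric intersection number $i(\cdot,\cdot)$ on measured foliations, which is a homeomorphism invariant, is homogeneous in each transverse measure, and satisfies $i(\f_s,\f_u)>0$ because the two foliations are transverse and jointly fill $S_g$. Applying $G$ and using invariance gives $i(\f_s,\f_u)=i\bigl(G(\f_s,\mu_s),G(\f_u,\mu_u)\bigr)=ab\,i(\f_s,\f_u)$, whence $ab=1$; setting $\rho:=b$ yields exactly the two cases of the statement. I expect this last step---the verification that the product of the measure-scaling factors equals $1$---to be the main subtlety, since it is the only place where one must go beyond the purely topological uniqueness of the foliations and invoke the (equivalent) fact that $G$ preserves the transverse area form determined by $\mu_s$ and $\mu_u$.
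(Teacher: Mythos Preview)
The paper does not give its own proof of this lemma: it simply attributes the result to McCarthy \cite[Lemma~1]{mccarthy} and states it without argument. So there is nothing in the paper to compare your proposal against beyond the citation.

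That said, your argument is correct and is essentially the standard proof one finds in McCarthy's paper. The comparison of stretch factors to force $|k|=1$, the appeal to uniqueness of the invariant measured foliations of a pseudo-Anosov to identify the $G$-images of $\f_s,\f_u$, and the use of a homeomorphism-invariant pairing to pin down $ab=1$ are exactly the expected ingredients. Your choice to phrase the last step via the geometric intersection number $i(\f_s,\f_u)$ is equivalent to the more common phrasing that $G$ preserves the transverse area form $\mu_s\otimes\mu_u$ (the total $\mu_s\times\mu_u$-area of $S_g$ is finite and positive, and is precisely $i(\f_s,\f_u)$); either formulation yields $ab=1$ immediately. One small point worth making explicit in your write-up: in the $k=1$ case you should rule out the possibility that $G$ swaps $\f_s$ and $\f_u$, which follows because the stable and unstable foliations of $F$ are distinguished by whether $F$ contracts or expands the transverse measure, and this labeling is preserved under conjugation.
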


\begin{rem}
\label{rem:dil_homo}
Let $\h = \{G \in \map(S_g) : G(\f_s)=\f_s \text{ and } G(\f_u)=\f_u\}$ and $\R_+$ be the group of positive real numbers under multiplication. There is a homomorphism $\lambda:\h\to \R_+$ such that $\lambda(G) = \lambda_{G}$ with $G(\f_u,\mu_u)=(\f_u,\lambda_{G}\mu_u)$ and $G(\f_s,\mu_s)=(\f_s,\lambda_{G}^{-1}\mu_s)$ (see \cite{mccarthy}). This homomorphism is known as the \textit{dilatation homomorphism}.

For a singular point $p$ of $\f_u$, let $\h_p$ be the subgroup of $\h$ consisting of mapping classes that fix $p$. Let $\Le_p$ be the set of all singular leaves of $\f_u$ originating at the singular point $p$. The action of $\h_p$ on $\Le_p$ induces a homomorphism $\phi_p:\h_p\to \Sigma_{|\Le_p|}$, where $\Sigma_{|\Le_p|}$ is the permutation group on $|\Le_p|$ letters.
\end{rem}

\noindent The image and kernel of the dilatation homomorphism $\lambda$ have also been described in \cite[Lemmas 2-3]{mccarthy}.
\begin{lem}
\label{lem:dilat_homo}
For the dilatation homomorphism $\lambda:\mathcal{H}\to \mathbb{R}_+$, we have that $\lambda(\mathcal{H})$ is infinite cyclic and $\ker \lambda$ is a finite group. 
\end{lem}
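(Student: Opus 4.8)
The plan is to treat $\lambda$ as a homomorphism from $\mathcal{H}$ into the multiplicative group $\mathbb{R}_+$ and to handle the two assertions separately: the image via the geometry of pseudo-Anosov dilatations, and the kernel via the rigidity of the flat metric induced by the foliations.

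First I would pin down the image. Since $F$ itself lies in $\mathcal{H}$ with $\lambda(F)=\lambda>1$, the subgroup $\lambda(\mathcal{H})\le \mathbb{R}_+$ is nontrivial. The key observation is that every element of $\lambda(\mathcal{H})$ other than $1$ is a pseudo-Anosov dilatation: if $G\in\mathcal{H}$ has $\lambda(G)\ne 1$, then after possibly replacing $G$ by $G^{-1}$ we may assume $\lambda(G)>1$, and then $G$ stretches $\f_u$ and contracts $\f_s$ while preserving this transverse, filling pair of measured foliations, which is precisely the condition that $G$ be pseudo-Anosov with stretch factor $\lambda(G)$. Passing to logarithms, $\log\lambda(\mathcal{H})$ is a nontrivial subgroup of $(\mathbb{R},+)$, all of whose nonzero elements have absolute value at least $c_g>0$, where $c_g$ is any uniform lower bound for the logarithm of a pseudo-Anosov dilatation on $S_g$ (such a bound exists by Penner). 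A subgroup of $\mathbb{R}$ whose nonzero elements are bounded away from $0$ is discrete, hence infinite cyclic, so $\lambda(\mathcal{H})$ is infinite cyclic.

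For the kernel I would use the flat structure determined by the foliations. An element $G\in\ker(\lambda)$ fixes both measured foliations on the nose, i.e. $G(\f_u,\mu_u)=(\f_u,\mu_u)$ and $G(\f_s,\mu_s)=(\f_s,\mu_s)$. The transverse pair $(\f_u,\mu_u),(\f_s,\mu_s)$ endows $S_g$ with a singular flat (half-translation) metric $q$, whose cone points are the singularities of the foliations, and any homeomorphism preserving both measured foliations is realized by an isometry of $(S_g,q)$. Since $g\ge 2$, Gauss--Bonnet forces $q$ to have genuine cone points of negative curvature, and the isometry group $\isom(S_g,q)$ of such a closed singular flat surface is finite. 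Consequently every $G\in\ker(\lambda)$ is represented by a finite-order homeomorphism and thus has finite order in $\map(S_g)$; that is, $\ker(\lambda)$ is a torsion subgroup.

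Finally I would conclude that a torsion subgroup of $\map(S_g)$ is finite, since $\map(S_g)$ is virtually torsion-free (for instance $\map(S_g)[3]$ is a normal, torsion-free subgroup of finite index, so any torsion subgroup injects into the finite quotient). Hence $\ker(\lambda)$ is finite. The main obstacle is the discreteness of $\lambda(\mathcal{H})$: the purely group-theoretic structure does not prevent dilatations from accumulating at $1$, so one genuinely needs the uniform lower bound on pseudo-Anosov dilatations (or, alternatively, an argument that the stabilizer of an arational foliation is discrete) to rule out a dense image; the kernel computation, by contrast, is a routine application of flat-metric rigidity together with the virtual torsion-freeness of $\map(S_g)$.
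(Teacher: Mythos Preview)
The paper does not supply its own proof; the lemma is quoted from \cite[Lemmas~2--3]{mccarthy}. Your argument is correct. For the kernel it is essentially the standard one: elements of $\ker(\lambda)$ preserve the singular flat metric $|q|$ determined by the pair of measured foliations and are therefore represented by elements of the finite group $\isom(S_g,|q|)$. Your subsequent detour through ``torsion subgroup plus virtual torsion-freeness of $\map(S_g)$'' works but is unnecessary, since the natural homomorphism $\isom(S_g,|q|)\to\map(S_g)$ already exhibits $\ker(\lambda)$ as the image of a finite group. For the image, McCarthy's route is slightly different from yours: rather than invoking a uniform lower bound $c_g$ on pseudo-Anosov log-dilatations, one observes that $\mathcal{H}$ preserves the Teichm\"uller geodesic through the marked flat surface $(S_g,q)$ and acts on it by translations of length $\log\lambda(\cdot)$; proper discontinuity of the $\map(S_g)$-action on Teichm\"uller space then forces $\log\lambda(\mathcal{H})$ to be discrete in $\mathbb{R}$. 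Both approaches are valid. Yours trades the appeal to Teichm\"uller theory for the (well-known but nontrivial) spectral-gap fact that pseudo-Anosov dilatations on a fixed surface are bounded away from $1$.
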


\section{Infinite metacyclic subgroups of mapping class group}
\label{sec:infinite_metacyclic}
For $g\geq 2$ and two nontrivial periodic mapping classes $F,G\in \map(S_g)$, the necessary and sufficient number-theoretic conditions under which conjugates $F',G'$ (of $F,G$ resp.) generate a finite metacyclic group has been derived in \cite{dhanwani,sanghi1,sanghi2}. In this section, we analyze the infinite metacyclic subgroups of $\map(S_g)$. From here on, for $F,G\in \map(S_g)$, we will assume that if $\langle F,G\rangle$ is a metacyclic group, then $\langle F\rangle \lhd \langle F,G \rangle $, which implies that $G^{-1}FG = F^k$ for some nonzero integer $k$.

\subsection{Metacyclic subgroups with pseudo-Anosov generators}
\label{subsec:infinite_metacyclic_pa}
Let $F\in \map(S_g)$ be a pseudo-Anosov mapping class with stretch factor $\lambda>1$. Let $(\f_s,\mu_s)$ and $(\f_u,\mu_u)$ be the stable and unstable singular measured foliations for $F$, respectively. We will now prove our first main result concerning infinite metacyclic subgroups of $\map(S_g)$ with at least one pseudo-Anosov generator. The homomorphism $\phi_p$ in the statement of following theorem has been defined in Remark \ref{rem:dil_homo}.

\begin{theorem}[Main Theorem 1]
\label{thm:main_thm2}
For $g\geq 2$, consider nontrivial mapping classes $F,G \in \map(S_g)$.
\begin{enumerate}[(i)]
\item Let $\langle F,G \rangle$ is metacyclic with $\langle F\rangle \lhd \langle  F,G \rangle$. Then the following statements hold. 
\begin{enumerate}[(a)]
\item If $F$ is a pseudo-Anosov, then $G$ cannot be an infinite order reducible mapping class.
\item If $F$ and $G$ are pseudo-Anosov, then $\langle F,G \rangle$ is abelian. Furthermore, either $\langle F,G\rangle\cong \Z$ or $\langle F,G\rangle \cong \mathbb{Z}_n\times \mathbb{Z}$ for some $n\in \N$.
\item Let $G$ be pseudo-Anosov and $\langle F,G\rangle$ is non-abelian. Then $F$ is a reducible mapping class of finite order.
\end{enumerate}
\item Let $F$ be pseudo-Anosov and $G$ is either periodic or pseudo-Anosov. Then $\langle F,G \rangle$ is an abelian metacyclic subgroup if and only if
\begin{enumerate}
\item $G(\f_u,\mu_u)=(\f_u,\mu_u)$, $G(\f_u,\mu_u)=(\f_s,\mu_s)$, and
\item there exist a singular point $p$ of $\f_u$ such that $G^{-1}FGF^{-1}\in \ker \phi_p$.
\end{enumerate} 
\item Let $F$ be pseudo-Anosov and $G$ be periodic. Then $\langle F,G \rangle$ is non-abelian metacyclic subgroup with $\langle F\rangle \lhd \langle  F,G \rangle$ if and only if
\begin{enumerate}
\item $G(\f_u,\mu_u)=(\f_s,\mu_s)$, $G(\f_s,\mu_s)=(\f_u,\mu_u)$, and
\item there exist a singular point $p$ of $\f_u$ such that $G^{-1}FGF\in \ker \phi_p$.
\end{enumerate} 
\end{enumerate}
\end{theorem}

\begin{proof}
Let $F$ be a pseudo-Anosov mapping class with $(\f_u,\mu_u)$ and $(\f_s,\mu_s)$ as its unstable and stable invariant singular measured foliations, respectively.

To begin with, we consider the case when $\langle F,G\rangle$ is metacyclic with $\langle F\rangle\lhd\langle F,G\rangle$. Let $F$ be pseudo-Anosov and $G$ be an infinite order reducible mapping class. Consider the dilatation homomorphism $\lambda:\mathcal{H}\to \mathbb{R}_+$ (see Remark \ref{rem:dil_homo}). Since $G^{-1}FG=F^k$, where $k=\pm 1$, $G^2$ commutes with $F$. By Lemma \ref{lem:foliation}, $G^2\in \mathcal{H}$, and since $G$ is not pseudo-Anosov, $G^2\in \ker \lambda$. This is impossible since $\ker \lambda$ is finite and $G^2$ has infinite order.

Next, we consider the case when $F,G$ are pseudo-Anosov and $\langle F,G \rangle$ is a non-abelian metacyclic subgroup. If $G^{-1}FG=F^{-1}$, then $G^2$ commutes with $F$. By Lemma \ref{lem:foliation}, it follows that $G^2$ preserves $(\f_u,\mu_u)$ and $(\f_s,\mu_s)$. Thus, $F$ and $G$ keep $(\f_u,\mu_u)$ and $(\f_s,\mu_s)$ invariant, which contradicts Lemma \ref{lem:foliation}. Therefore, $\langle F,G \rangle$ is abelian, and from Lemma \ref{lem:foliation}, we have $\langle F,G\rangle\subset \mathcal{H}$. Since $\ker \lambda$ is a finite group, if $\ker \lambda \big|_{\langle F,G \rangle} \neq 1$, then $\langle F,G\rangle \cong \Z_n \times \Z$ for some $n\in \N$. Furthermore, if $\ker \lambda\big|_{\langle F,G \rangle} =1$, then $\langle F,G\rangle\cong \Z$.

Next, we assume that $G$ is a pseudo-Anosov mapping class and $\langle F,G \rangle$ is non-abelian metacyclic subgroup. As discussed above, $F$ can not be pseudo-Anosov. Let $F$ be an infinite order reducible mapping class. Since $\C(G^{-1}FG)=G^{-1}(\C(F))$, $\C(F^{-1})=\C(F)$ and $G^{-1}FG=F^{-1}$, it follows that $G(\C(F))=\C(F)$. But as $\C(F)\neq \emptyset$ and $G$ is irreducible, this contradicts our assumption. Now, assume that $F$ is periodic. If $F$ is irreducible, then, by Theorem \ref{thm:gilman}, $\Orb_F \approx S_{0,3}$. Since $GFG^{-1}=F^k$, $G$ induces an infinite order mapping class in $\map(S_{0,3})$ which is not possible. Hence, $F$ must be reducible periodic. 

Finally, we consider the case when $F$ is pseudo-Anosov and $G$ is either periodic or pseudo-Anosov. Consider the homomorphism $\phi_p$  defined in Remark \ref{rem:dil_homo}. From Lemma \ref{lem:foliation}, if $G^{-1}FG=F$, then $G(\f_u,\mu_u)=(\f_u,\mu_u)$ and $G(\f_s,\mu_s)=(\f_s,\mu_s)$. Furthermore, it is apparent that $G^{-1}FGF^{-1} \in \ker \phi_p$. Conversely, we assume that $G(\f_u,\mu_u)=(\f_u,\mu_u)$, $G(\f_s,\mu_s)=(\f_s,\mu_s)$, and $G^{-1}FGF^{-1}\in \ker \phi_p$ for some singular point $p$ of $\f_u$. Let $\F$ and $\G$ be representatives of $F$ and $G$, respectively. Since $G^{-1}FGF^{-1}\in \ker\phi_p$, we have  $\G^{-1}\F\G\F^{-1}(L)=L$, where $L$ is a leaf of $\f_u$ originating at the singular point $p$. Since $\lambda(\G^{-1}\F\G\F^{-1})=1$, $\G^{-1}\F \G \F^{-1}$ fixes $L$ pointwise. Since $L$ is dense in $S_g$ \cite[Theorem 9.6]{flp}, we must have $\G^{-1}\F \G \F^{-1}=1$, and hence $G^{-1}FG=F$. By a similar argument, (iii) follows.
\end{proof}
\noindent We address the case when $G$ is a pseudo-Anosov mapping class and $F$ is a nontrivial periodic mapping class in the following remark. 

\begin{rem}
For $g\geq 2$, let $F,G\in \map(S_g)$ be such that $G$ is a pseudo-Anosov mapping class and $F$ is a nontrivial periodic mapping class. By Birman-Hilden theory \cite{birman73,harvey75}, it follows that $\langle F,G \rangle$ is metacyclic with $\langle F\rangle \lhd \langle  F,G \rangle$ if and only if there exist a pseudo-Anosov mapping class $\bar{G}\in \map(\Orb_F)$ such that $\bar{G}$ lifts to $G$ under the branched cover $p:S_g\to \Orb_F$. By removing branch points and their preimages, $p$ can be considered an unbranched cover between punctured surfaces. Since $p$ is an abelian cover, an $\bar{G}$ lifts under $p$ if and only if the induced isomorphism $\bar{G}_{\#} \in \aut(H_1(\Orb_F,\Z))$ leaves the subgroup of $H_1(\Orb_F,\Z)$ corresponding to the cover $p$ invariant. This homological criterion is often straightforward to compute (see~\cite{ghaswala17,rajeevsarathy211}). 
\end{rem}

Now, we construct several infinite metacyclic subgroups of $\map(S_g)$ with a pseudo-Anosov generator. In the following example, we describe a non-abelian infinite metacyclic subgroup having a nontrivial periodic and a pseudo-Anosov generator.

\begin{exmp}
\label{exmp:z-1z4s4g}
For $g\geq 1$, let $G$ be a rotation of $S_{4g}$ by $2\pi/4$ as shown in the Figure \ref{fig:z-1z4s4g}. By considering the multicurves $A=\{a_1,a_2,\dots, a_{4g+1}\}$ and $B=\{b_1,b_2,\dots, b_{4g+1}\}$, we see that the curves in $A \cup B$ fill $S_{4g}$. From Theorem \ref{thm:penner}, it follows that 
\begin{center}
$F=\prod_{i=1}^{4g+1} T_{a_i} \prod_{j=1}^{4g+1} T_{b_j}^{-1}$
\end{center}
is a pseudo-Anosov mapping class. For $1\leq i\leq 3g$ and $1 \leq i' \leq g$, we have $G(a_i)=b_{g+i}$, $G(b_i)=a_{g+i}$, $G(a_{3g+i'})=b_{i'}$, $G(b_{3g+i'})=a_{i'}$, and $G$ exchanges the curves $a_{4g+1}$ and $b_{4g+1}$. Therefore, $G^{-1}FG=F^{-1}$, and so we have $\langle F,G\rangle\cong \mathbb{Z}\rtimes_{-1}\mathbb{Z}_4$. We observe that $\langle F, G^2 \rangle \cong \Z \times \Z_2$.
\begin{figure}[H]
\labellist
\tiny

\pinlabel $a_1$ at -15, 285
\pinlabel $b_1$ at 50, 235
\pinlabel $a_2$ at 95, 305
\pinlabel $a_g$ at 132, 305
\pinlabel $b_g$ at 160, 248

\pinlabel $a_{2g}$ at 360, 175
\pinlabel $b_{2g}$ at 210, 145
\pinlabel $b_{g+2}$ at 360, 75
\pinlabel $a_{g+1}$ at 235, 50
\pinlabel $b_{g+1}$ at 280, -10

\pinlabel $b_{3g}$ at 400, 250
\pinlabel $a_{3g}$ at 445, 305
\pinlabel $a_{2g+2}$ at 480, 258
\pinlabel $b_{2g+1}$ at 520, 235
\pinlabel $a_{2g+1}$ at 590, 280

\pinlabel $b_{3g+1}$ at 285, 580
\pinlabel $a_{3g+1}$ at 360, 520
\pinlabel $b_{3g+2}$ at 210, 485
\pinlabel $b_{4g}$ at 350, 420
\pinlabel $a_{4g}$ at 225, 390

\pinlabel $a_{4g+1}$ at 340, 275
\pinlabel $b_{4g+1}$ at 230, 320

\pinlabel $\G$ at 450, 430
\pinlabel $\frac{2\pi }{4}$ at 460, 390

\endlabellist
\centering
\includegraphics[scale=0.4]{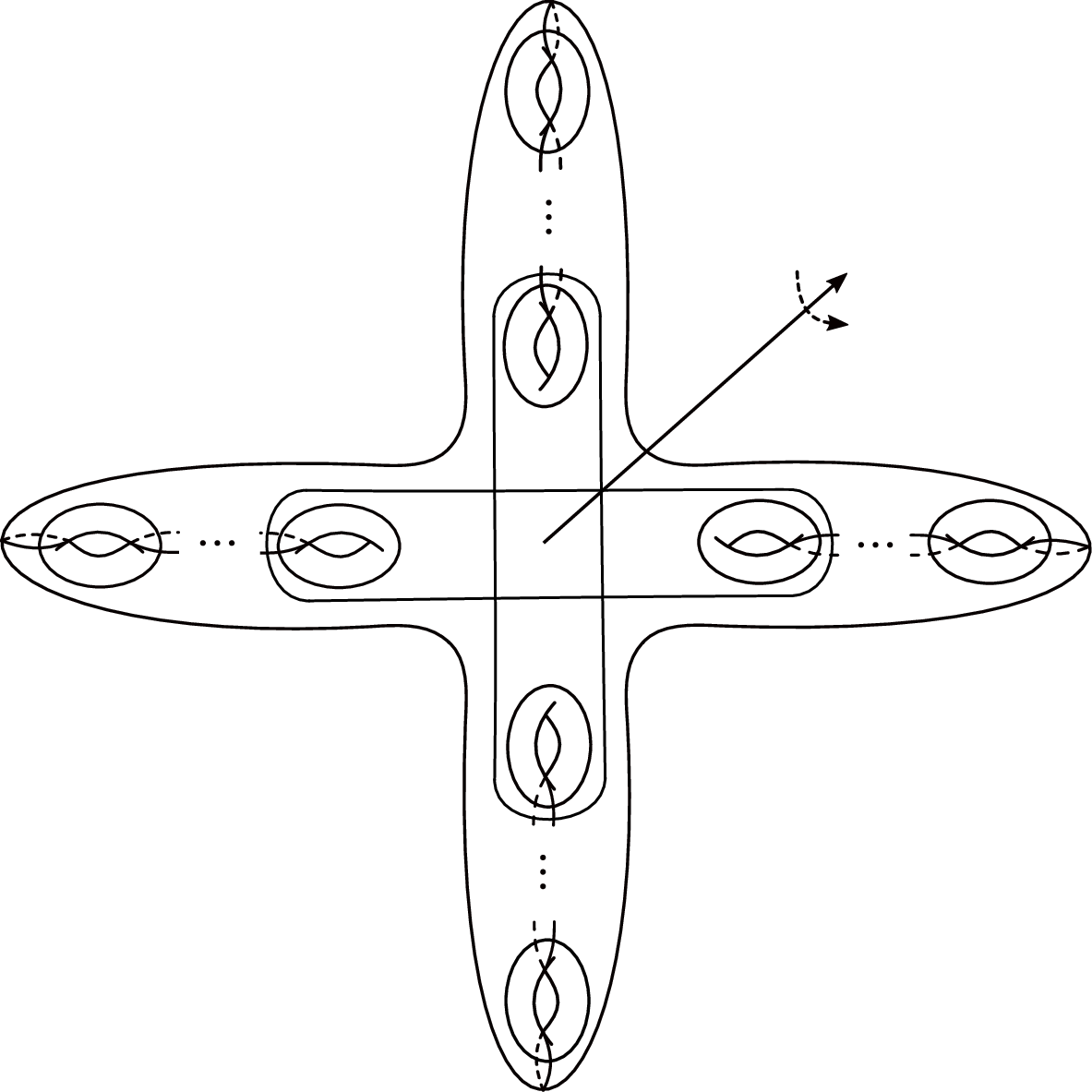}
\caption{Realization of an infinite metacyclic subgroup $\langle F,G\rangle < \map(S_{4g})$ isomorphic to $\mathbb{Z}\rtimes_{-1} \mathbb{Z}_4$ generated by a periodic mapping class $G$ of order $4$ and a pseudo-Anosov mapping class $F$.}
\label{fig:z-1z4s4g}
\end{figure}
\end{exmp}

Note that the construction described in Example \ref{exmp:z-1z4s4g} generalizes to any even integer $n\geq 4$, where $n$ is the order of the periodic generator. For even genera, we will now provide an example of a metacyclic subgroup of $\map(S_g)$ isomorphic to $\Z \rtimes_{-1} \Z_2$. 

\begin{exmp}
\label{exmp:lower_bound_pa}
For an integer $h\geq 1$, let $A=\{a_1,a_2,\dots,a_{2h},d\}$ and $B=\{b_1,b_2,\dots,b_{2h},c\}$ be two multicurves in $S_{2h}$,  and $G\in \map(S_{2h})$ be an involution, as shown in the Figure \ref{fig:z-1z2sg}. Since $A\cup B$ fills $S_{2h}$, by Theorem \ref{thm:penner}, the mapping class $$F= \prod_{i=1}^{2h} T_{a_i}T_d \prod_{j=1}^{2h}T_{b_j}^{-1}T_c^{-1}$$ is pseudo-Anosov. For $1\leq i\leq 2h$, $G$ maps $a_i$ to $b_{2h+1-i}$, $b_i$ to $a_{2h+1-i}$, and $c$ to $d$. Therefore, we have $G^{-1}FG=F^{-1}$, and so it follows that $\langle F,G \rangle <\map(S_{2h})$ is isomorphic to $\mathbb{Z}\rtimes_{-1} \mathbb{Z}_2$.
\begin{figure}[H]
\labellist
\tiny

\pinlabel $b_1$ at -10, 45
\pinlabel $b_2$ at 80, 25
\pinlabel $b_{h-1}$ at 140, 25
\pinlabel $b_h$ at 197, 30
\pinlabel $b_{h+1}$ at 305, 70
\pinlabel $b_{h+2}$ at 380, 20
\pinlabel $b_{2h}$ at 495, 20

\pinlabel $a_1$ at 48, 20
\pinlabel $a_{h-1}$ at 175, 20
\pinlabel $a_h$ at 235, 20
\pinlabel $a_{h+1}$ at 345, 25
\pinlabel $a_{h+2}$ at 410, 25
\pinlabel $a_{2h-1}$ at 460, 25
\pinlabel $a_{2h}$ at 550, 40

\pinlabel $c$ at 230, 95
\pinlabel $d$ at 303, -5

\pinlabel $\pi$ at 325, 115
\pinlabel $\G$ at 315, 135

\endlabellist
\centering
\includegraphics[scale=0.6]{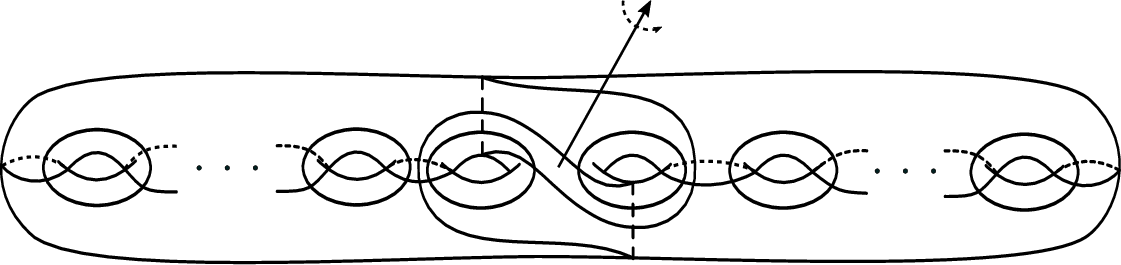}
\caption{An infinite metacyclic subgroup $\langle F,G\rangle<\map(S_{2h})$ isomorphic to $\mathbb{Z}\rtimes_{-1} \mathbb{Z}_2$, generated by an involution $G$ and a pseudo-Anosov $F$.}
\label{fig:z-1z2sg}
\end{figure}
\end{exmp}

\noindent Examples \ref{exmp:z-1z4s4g} - \ref{exmp:lower_bound_pa} together generalize to the following corollary. 

\begin{cor}
For an even positive integer $n\mid g$, there is an infinite metacyclic subgroup of $\langle F,G \rangle <\map(S_g)$ isomorphic to $\Z \rtimes_{-1}\Z_n$, where $F$ is a Penner-type pseudo-Anosov and $G$ is  periodic with $D_G=(n,g/n;(1,n),(n-1,n))$.
\end{cor}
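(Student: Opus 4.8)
The plan is to realize the prescribed periodic map $G$ from its data set and then build a $G$-symmetric filling pair of multicurves whose associated Penner pseudo-Anosov is inverted by $G$, thereby generalizing simultaneously the case $n=2$ of Example~\ref{exmp:lower_bound_pa} and the case $n=4$ of Example~\ref{exmp:z-1z4s4g}.

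First I would realize the candidate $G$. Since $n \mid g$, write $g = n\,g_0$ with $g_0 = g/n \geq 1$; one checks directly that $D_G = (n, g/n; (1,n),(n-1,n))$ satisfies all the conditions of Definition~\ref{defn:data_set}, the Riemann--Hurwitz identity (vi) and the angle-sum identity (v) both reducing to trivialities. By the Nielsen correspondence between cyclic data sets and conjugacy classes of $\Z_n$-actions (the proposition following Definition~\ref{defn:data_set}), $D_G$ is realized by a periodic $G$ of order $n$ whose quotient orbifold is $S_{g_0}$ with two cone points of order $n$; equivalently, $G$ is a rotation of $S_g$ with exactly two fixed points, at which the induced rotation numbers are $2\pi/n$ and $2\pi (n-1)^{-1}/n \equiv -2\pi/n \pmod{2\pi}$. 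Viewing these two fixed points as the poles of a symmetric embedding, I would decompose $S_g$ into $n$ cyclically permuted sectors (a fundamental domain for $\langle G\rangle$ away from the poles), each contributing genus $g_0$.

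Next I would install chains of curves compatible with this symmetry, exactly in the spirit of the $n=2,4$ pictures. In each sector I place a chain modelled on the $a$--$b$ chain of Example~\ref{exmp:ptpa}, arranged so that the rotation $G$ carries the positively twisted family $A$ into the negatively twisted family $B$ and vice versa. This is precisely where the evenness of $n$ enters: labelling the sectors cyclically with alternating types, the one-sector rotation $G$ sends each type to the opposite type \emph{consistently around the entire cycle} exactly when $n$ is even (for odd $n$ the wrap-around forces $G$ to preserve the families, yielding an abelian group instead). Choosing the chains so that $A \cup B$ fills $S_g$, I set $F = \prod_{a\in A} T_a \prod_{b\in B} T_b^{-1}$, which is a Penner-type pseudo-Anosov by Theorem~\ref{thm:penner}.

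Finally I would deduce the relation and the isomorphism type. Using $G T_c G^{-1} = T_{G(c)}$ together with the fact that $G$ interchanges $A$ and $B$, the conjugate $G^{-1}FG$ is the Penner pseudo-Anosov with the positive and negative twisting families exchanged; this reversed map carries the stable and unstable foliations of $F$ into one another, so $G$ exchanges the invariant measured foliations of $F$. Theorem~\ref{thm:main_thm2}(i)(c)(2) then shows $\langle F,G\rangle$ is non-abelian metacyclic, and since $F$ is pseudo-Anosov, Lemma~\ref{lem:foliation} gives $k = \pm1$, forcing $G^{-1}FG = F^{-1}$. As $F$ has infinite order, $|G| = n$, $\langle F\rangle \lhd \langle F,G\rangle$, and $\langle F\rangle \cap \langle G\rangle = 1$ (a finite subgroup of $\langle F\rangle \cong \Z$), the group is exactly the middle presentation in~\eqref{eqn:infinite_pres}, namely $\Z \rtimes_{-1} \Z_n$, with the evenness of $n$ matching the $\Z_{2m}$ factor there. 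I expect the main obstacle to be the explicit $G$-equivariant curve combinatorics for arbitrary $g_0 \geq 1$ and even $n$: one must exhibit, uniformly, chains that fill $S_g$ and on which the single rotation genuinely swaps the two families; once this is in hand, the foliation-exchange and relation steps follow essentially automatically from Theorem~\ref{thm:main_thm2}.
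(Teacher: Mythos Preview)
Your approach is essentially the paper's: the corollary is asserted there without proof as a direct generalization of Examples~\ref{exmp:lower_bound_pa} and~\ref{exmp:z-1z4s4g}, and you correctly identify the construction of a $G$-equivariant filling pair $A,B$ with $G$ swapping the two families as the heart of the matter (and, like the paper, you leave the uniform curve combinatorics for general even $n$ and $g_0\ge 1$ as the unwritten step).

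One point worth tightening. Once you know $G$ swaps the multicurves $A$ and $B$, the relation $G^{-1}FG=F^{-1}$ follows \emph{directly}, exactly as in the paper's examples: writing $M_A=\prod_{a\in A}T_a$ and $M_B=\prod_{b\in B}T_b$ (each a product of commuting twists), one has $F=M_AM_B^{-1}$ and $G^{-1}FG=M_BM_A^{-1}=F^{-1}$. Your route through Theorem~\ref{thm:main_thm2}(i)(c)(2) and Lemma~\ref{lem:foliation} is unnecessary, and as written it is slightly circular: the claim that ``the reversed map carries the stable and unstable foliations of $F$ into one another'' is only evident once one already recognizes that reversed map as $F^{-1}$. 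Replace that paragraph with the one-line Dehn-twist computation and the argument is clean.
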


For positive integer $m$, let $\langle F,G \mid G^{2m}=1,G^{-1}FG=F^{-1}\rangle$ be an infinite metacyclic subgroup of $\map(S_g)$, where $F$ is a pseudo-Anosov mapping class and $G$ is a periodic mapping class. Then it is easily seen that $\langle F, G^2\rangle$ is abelian (as in Example \ref{exmp:z-1z4s4g}). However, a natural question is whether every infinite abelian metacyclic subgroup of $\map(S_g)$ arises this way. The following example shows that this is not true in general.

\begin{exmp}
\label{exmp:zz4s4g+1}
For $g\geq 1$, let $G$ be a free rotation of $S_{4g+1}$ by $2\pi/4$ as shown in the Figure \ref{fig:zz4s4g+1}. We observe that the multicurves $A=\{a_1,a_2,\dots, a_{4g+1}\}$ and $B=\{b_1,b_2,\dots, b_{4g+4}\}$ fill $S_{4g+1}$.
\begin{figure}[H]
\labellist
\tiny

\pinlabel $b_1$ at -15, 285
\pinlabel $a_1$ at 50, 235
\pinlabel $b_2$ at 95, 305
\pinlabel $b_g$ at 135, 305
\pinlabel $a_g$ at 175, 252

\pinlabel $a_{2g}$ at 355, 175
\pinlabel $b_{2g}$ at 215, 145
\pinlabel $b_{g+2}$ at 355, 75
\pinlabel $a_{g+1}$ at 225, 50
\pinlabel $b_{g+1}$ at 280, -10

\pinlabel $a_{3g}$ at 395, 255
\pinlabel $b_{3g}$ at 435, 305
\pinlabel $b_{2g+2}$ at 480, 258
\pinlabel $a_{2g+1}$ at 535, 235
\pinlabel $b_{2g+1}$ at 590, 285

\pinlabel $b_{3g+1}$ at 285, 580
\pinlabel $a_{3g+1}$ at 350, 520
\pinlabel $b_{3g+2}$ at 210, 485
\pinlabel $b_{4g}$ at 350, 420
\pinlabel $a_{4g}$ at 225, 390

\pinlabel $b_{4g+1}$ at 235, 303
\pinlabel $b_{4g+2}$ at 315, 240
\pinlabel $b_{4g+3}$ at 338, 270
\pinlabel $b_{4g+4}$ at 257, 330
\pinlabel $a_{4g+1}$ at 250, 260

\pinlabel $\G$ at 450, 430
\pinlabel $\frac{2\pi }{4}$ at 460, 390

\endlabellist
\centering
\includegraphics[scale=0.4]{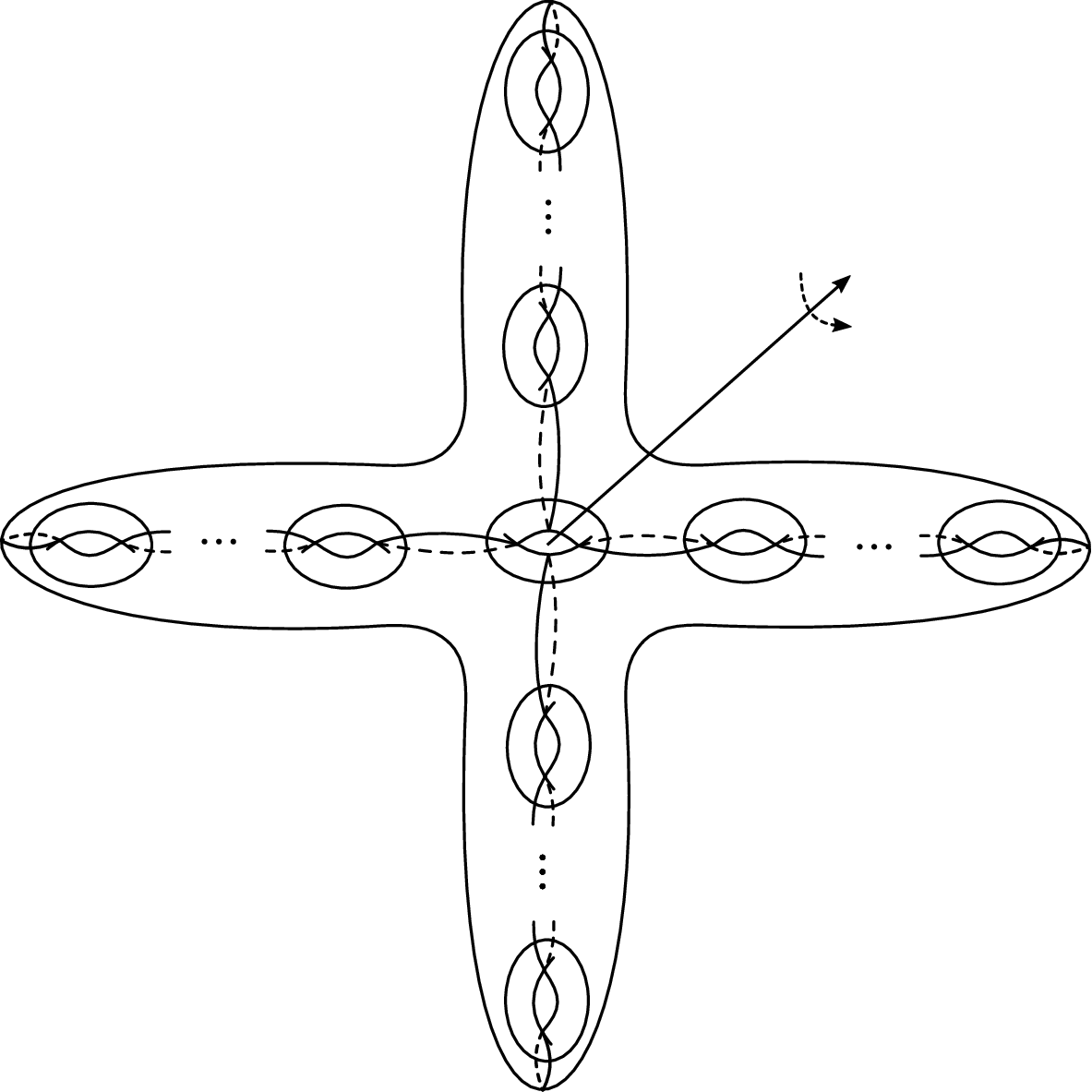}
\caption{Realization of an infinite metacyclic subgroup $\langle F,G\rangle < \map(S_{4g+1})$ isomorphic to $\mathbb{Z}\times \mathbb{Z}_4$ generated by a periodic mapping class $G$ of order $4$ and a pseudo-Anosov mapping class $F$.}
\label{fig:zz4s4g+1}
\end{figure}
\noindent From the Theorem \ref{thm:penner}, the mapping class $$F=\prod_{i=1}^{4g+1} T_{a_i} \prod_{j=1}^{4g+4} T_{b_j}^{-1}$$ is pseudo-Anosov. For $1\leq i\leq 3g$, $1\leq i'\leq g$, and $1\leq j\leq 3$, we have that $G(a_i)=a_{i+g}$, $G(b_i) = b_{i+g}$, $G(a_{3g+i'})=a_{i'}$, $G(b_{3g+i'})=b_{i'}$, $G(a_{4g+1})=a_{4g+1}$, $G(b_{4g+j})=b_{4g+j+1}$, and $G(b_{4g+4})=b_{4g+1}$. By construction, we have $GF=FG$, and so it follows that $\langle F,G\rangle\cong \mathbb{Z}\times \mathbb{Z}_4$.

Assume that $\langle F, G\rangle$ is a subgroup of a non-abelian infinite metacyclic subgroup $\langle F, G'\rangle$, where $(G')^2 = G$. It follows from \cite[Corollary 5.7]{dhanwani} that $G$ is primitive. Therefore, such a $G'$ cannot exist.
\end{exmp}

In Example \ref{exmp:zz4s4g+1}, the periodic generator was represented by a nontrivial free rotation, but in the following example, the periodic generator is represented by a nontrivial non-free rotation.

\begin{exmp}
\label{exmp:zz4s4g}
For $g\geq 1$, let $G$ be a rotation of $S_{4g}$ by $2\pi/4$ as shown in the Figure \ref{fig:zz4s4g}. We observe that the multicurves $A=\{a_1,a_2,\dots, a_{4g+1}\}$ and $B=\{b_1,b_2,\dots, b_{4g}\}$ fill $S_{4g}$. From the Theorem \ref{thm:penner}, the mapping class $$F=\prod_{i=1}^{4g+1} T_{a_i} \prod_{j=1}^{4g} T_{b_j}^{-1}$$ is pseudo-Anosov. For $1\leq i\leq 3g$, $1\leq i'\leq g$, and $1\leq j\leq 3$, we have that $G(a_i)=a_{i+g}$, $G(b_i) = b_{i+g}$, $G(a_{3g+i'})=a_{i'}$, $G(b_{3g+i'})=b_{i'}$, and $G(a_{4g+1})=a_{4g+1}$. By construction, we have $GF=FG$, and so it follows that $\langle F,G\rangle\cong \mathbb{Z}\times \mathbb{Z}_4$.
\begin{figure}[H]
\labellist
\tiny

\pinlabel $b_1$ at -15, 285
\pinlabel $a_1$ at 50, 235
\pinlabel $b_2$ at 95, 305
\pinlabel $b_g$ at 135, 305
\pinlabel $a_g$ at 230, 280

\pinlabel $a_{2g}$ at 290, 220
\pinlabel $b_{2g}$ at 215, 145
\pinlabel $b_{g+2}$ at 355, 75
\pinlabel $a_{g+1}$ at 225, 50
\pinlabel $b_{g+1}$ at 280, -10

\pinlabel $a_{3g}$ at 340, 280
\pinlabel $b_{3g}$ at 435, 305
\pinlabel $b_{2g+2}$ at 480, 258
\pinlabel $a_{2g+1}$ at 535, 235
\pinlabel $b_{2g+1}$ at 590, 285

\pinlabel $b_{3g+1}$ at 285, 580
\pinlabel $a_{3g+1}$ at 350, 520
\pinlabel $b_{3g+2}$ at 210, 485
\pinlabel $b_{4g}$ at 350, 420
\pinlabel $a_{4g}$ at 290, 345

\pinlabel $a_{4g+1}$ at 270, 320

\pinlabel $\G$ at 450, 430
\pinlabel $\frac{2\pi }{4}$ at 460, 390

\endlabellist
\centering
\includegraphics[scale=0.4]{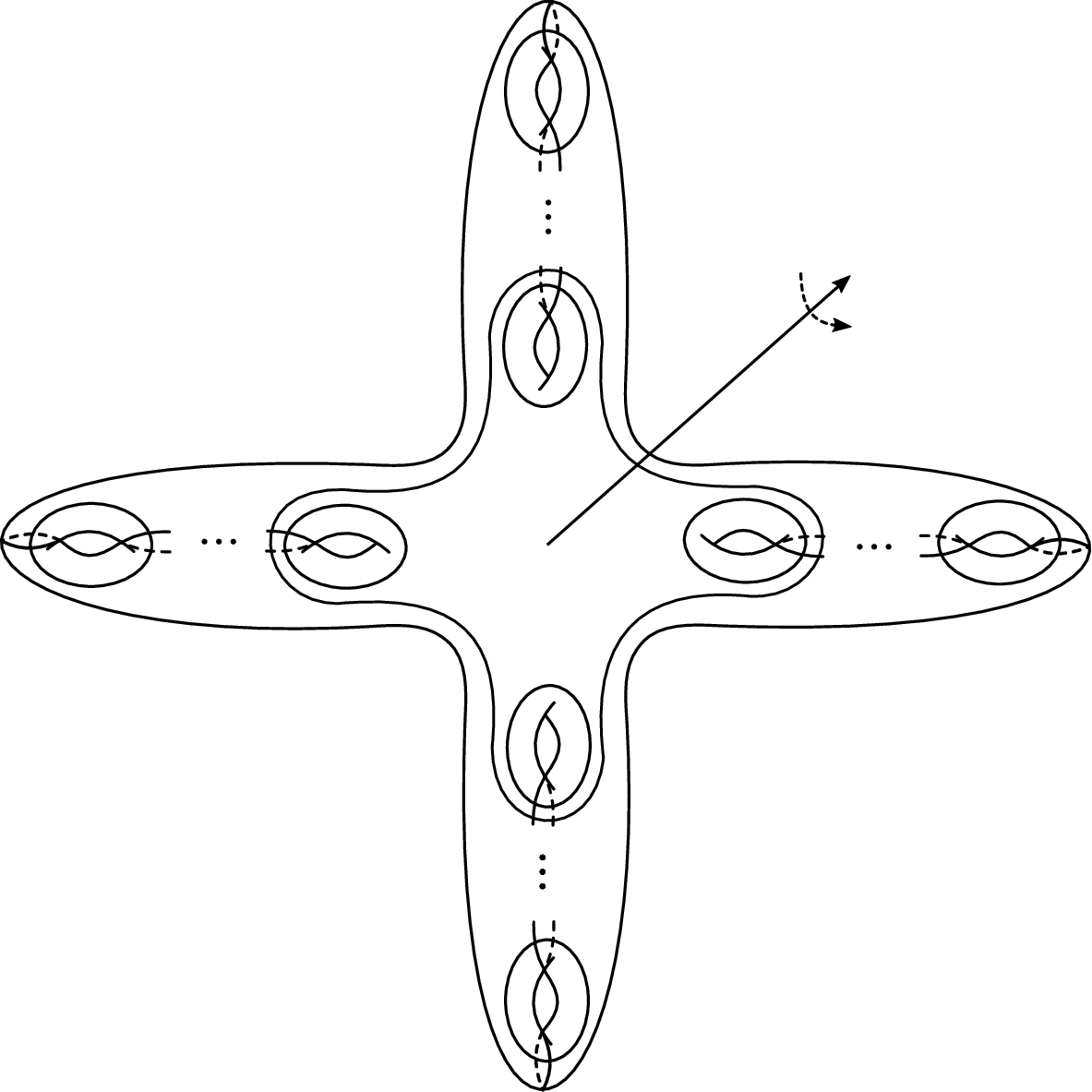}
\caption{Realization of an infinite metacyclic subgroup $\langle F,G\rangle < \map(S_{4g})$ isomorphic to $\mathbb{Z}\times \mathbb{Z}_4$ generated by a periodic mapping class $G$ of order $4$ and a pseudo-Anosov mapping class $F$.}
\label{fig:zz4s4g}
\end{figure}
\end{exmp}

\noindent The Examples \ref{exmp:zz4s4g+1} - \ref{exmp:zz4s4g} can be generalized to the following. 

\begin{cor}
For any positive integer $n\geq 2$ such that $n\mid g$ (resp. $n \mid g-1$), there is an infinite metacyclic subgroup of $\langle F,G \rangle <\map(S_g)$ isomorphic to $\Z \times\Z_n$, where $F$ is a Penner-type pseudo-Anosov and $G$ is periodic with $D_G=(n,g/n;(1,n),(n-1,n))$ $($resp. $D_G=(n,(g+n-1)/n,1;-)$ $)$.
\end{cor}

\begin{rem}
\label{rem:non_penner_type}
An infinite metacyclic subgroup of $\langle F, G \rangle < \map(S_g)$ generated by a Penner-type pseudo-Anosov mapping class $F$ can also have pseudo-Anosovs of non-Penner-type. In Examples \ref{exmp:z-1z4s4g} and \ref{exmp:zz4s4g+1}, it can be seen that every pseudo-Anosov mapping class in $\langle F, G \rangle$ which is not a power of $F$ is a non-Penner-type pseudo-Anosov. In fact, $F$ can  also be replaced with a non-Penner-type pseudo-Anosov generator. In Example \ref{exmp:zz4s4g+1}, for $i\not \equiv 0 \pmod {|G|}$ and $j\neq 0$, the mapping class $G^iF^j$ is a non-Penner-type pseudo-Anosov, while in Example \ref{exmp:z-1z4s4g}, $G^iF^j$, where $i$ is an even positive integer such that $i\not \equiv 0 \pmod {|G|}$ and $j\neq 0$, is a non-Penner-type pseudo-Anosov. Furthermore, in each case, taking $j = \pm 1$ would yield elements that are possible generators of $\langle F,G \rangle$ in place of $F$.
\end{rem}

\begin{rem}
\label{rem:stretch_factor}
Let $F$ be a pseudo-Anosov generator of a metacyclic subgroup of $\map(S_g)$. Then there is no upper bound on the stretch factor $\lambda(F)$ of $F$. This follows from the simple fact that if  $\langle F,G \rangle$ is a metacyclic subgroup of $\map(S_g)$, then $\langle F^n,G \rangle$ is also a metacyclic subgroup for all $n>1$, where $\lambda(F^n) = \lambda(F)^n$.
\end{rem}

\subsection{Metacyclic subgroups with reducible generators of infinite order}
\label{subsec:infinite_metacyclic_reducible}
We begin this subsection with the following lemma which provide necessary and sufficient conditions under which two multitwists are equal.

\begin{lem}[{\cite[Lemma 3.17]{primer}}]
\label{lem:equal_multitwist}
Let $A=\{a_1,\dots,a_n\}$ and $B=\{b_1,\dots,b_m\}$ be two multicurves in $S_g$. Let $p_i$ and $q_i$ be nonzero integers. If
\begin{center}
$T_{a_1}^{p_1}\cdots T_{a_n}^{p_n}=T_{b_1}^{q_1}\cdots T_{b_m}^{q_m}$
\end{center}
in $\map(S_g)$, then $m=n$ and the sets $\{T_{a_i}^{p_i}\}$, $\{T_{b_i}^{q_i}\}$ are equal.
\end{lem}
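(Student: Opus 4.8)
The plan is to recover the unordered data $\{(a_i,p_i)\}$ from the single mapping class $M := T_{a_1}^{p_1}\cdots T_{a_n}^{p_n} = T_{b_1}^{q_1}\cdots T_{b_m}^{q_m}$ using geometric intersection numbers. Two elementary ingredients drive everything. First, for an essential simple closed curve $c$ and $k\in\Z$ the single-twist formula gives the exact identity $i(T_a^k(c),c)=|k|\,i(a,c)^2$; in particular $T_a^k(c)\neq c$ whenever $k\neq 0$ and $i(a,c)\neq 0$, and $k\mapsto T_a^k(c)$ is injective on isotopy classes for such $c$ (if $T_a^k(c)=T_a^{k'}(c)$ then $T_a^{k-k'}(c)=c$, forcing $|k-k'|\,i(a,c)^2=0$, i.e. $k=k'$). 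Second, a multitwist $N$ with all exponents nonzero fixes the isotopy class of a curve $c$ if and only if $c$ is disjoint from the support of $N$: disjointness plainly gives $N(c)=c$, whereas if $c$ meets the support then applying a large power $N^t$ (wrapping $c$ many times inside each of the disjoint twisting annuli) makes $i(N^t(c),c)>0$ by the single-twist formula, so $N(c)\neq c$.

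First I would prove that the supports agree, i.e. $A=B$ as multicurves (whence $n=m$). Applying $M=T_{b_1}^{q_1}\cdots T_{b_m}^{q_m}$ to each $a_i$ and using $M(a_i)=a_i$, the second ingredient forces $i(a_i,b_j)=0$ for all $j$; symmetrically each $b_j$ is disjoint from $A$. Hence $A\cup B$ is a multicurve, and it remains to rule out a component $a_i$ of $A$ isotopic to no $b_j$. Suppose such an $a_i$ exists. Cutting $S$ along the multicurve $(A\setminus\{a_i\})\cup B$, the curve $a_i$ lies in some complementary component $R$ in which it is essential and \emph{non}-peripheral (it is isotopic to none of the cutting curves, which are exactly the $a_j$ with $j\neq i$ and the $b_l$). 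Such an $R$ is neither a disk, an annulus, nor a pair of pants, so $a_i$ admits a dual curve $c\subset R$ with $i(a_i,c)\neq 0$. Viewed in $S$, this $c$ meets $a_i$ but is disjoint from every other $a_j$ and from every $b_l$. Then $M(c)=T_{b_1}^{q_1}\cdots T_{b_m}^{q_m}(c)=c$, while $M(c)=T_{a_i}^{p_i}(c)\neq c$ by the first ingredient, a contradiction. Therefore every $a_i$ is isotopic to some $b_j$ and, by symmetry, $A=B$ and $n=m$.

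With $A=B$ established, matching the exponents is immediate. For a fixed component $a_i=b_{\sigma(i)}$ of the common support, choose (by the same change-of-coordinates argument, cutting now only along $A\setminus\{a_i\}$) a curve $c$ with $i(a_i,c)\neq 0$ that is disjoint from all other components. Then both products collapse on $c$ to a single twist, giving $T_{a_i}^{p_i}(c)=M(c)=T_{a_i}^{q_{\sigma(i)}}(c)$, and injectivity of $k\mapsto T_{a_i}^k(c)$ yields $p_i=q_{\sigma(i)}$. Hence $\{T_{a_i}^{p_i}\}=\{T_{b_j}^{q_j}\}$, as claimed. (Alternatively, Step~1 can be phrased via canonical reduction systems: a multitwist with all exponents nonzero satisfies $\C(M)=A$, so $A=\C(M)=B$ at once; I prefer the intersection-number route as it is self-contained and simultaneously produces the isolating curves needed in Step~2.)

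The main obstacle is Step~1, and specifically the existence of an \emph{isolating} curve — one meeting a single prescribed support component and nothing else, including the curves of the second multicurve $B$. This is exactly where the disjointness of $A\cup B$ and the change-of-coordinates principle must be combined, and where one must confirm that $a_i$ persists as an essential non-peripheral curve after cutting so that a dual curve genuinely exists. Once such isolating curves are available, both the identification $A=B$ and the exponent matching reduce to the exact single-twist intersection formula.
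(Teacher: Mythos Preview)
The paper does not supply its own proof of this lemma: it is quoted verbatim as \cite[Lemma~3.17]{primer} and used as a black box in the proof of Theorem~\ref{thm:main_thm1}. So there is nothing in the paper to compare your argument against.

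Your proposal is correct and is essentially the standard proof one finds in Farb--Margalit's \emph{Primer}: first show the supports coincide by producing, for each putative ``extra'' curve $a_i$, an isolating dual curve $c$ disjoint from the remaining curves and from all of $B$, and then read off the exponents by the same device. Your alternative one-line version via canonical reduction systems ($\C(M)=A=B$) is also valid and is in fact closer in spirit to how the paper itself thinks about multitwists elsewhere.

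One small point of rigor: your ``second ingredient'' --- that a multitwist with nonzero exponents fixes $c$ only if $c$ is disjoint from the support --- is true, but the justification you give (``applying a large power $N^t$ \ldots makes $i(N^t(c),c)>0$ by the single-twist formula'') invokes the single-twist formula for what is a priori a multitwist with possibly mixed signs, where naive cancellation could be feared. The clean statement you want is the multitwist intersection inequality (Proposition~3.4 in the \emph{Primer}), or alternatively the observation that after putting $c$ in minimal position with the disjoint system $\{b_j\}$ the local picture in each twisting annulus already produces essential intersections with $c$ that the bigon criterion cannot remove. Either way the claim holds, and the rest of your argument goes through unchanged.
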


\noindent We will now establish our second main result that gives necessary and sufficient conditions under which two mapping classes which are not pseudo-Anosov form an infinite metacyclic subgroup of $\map(S_g)$.

\begin{theorem}[Main Theorem 2]
\label{thm:main_thm1}
For $g\geq 2$, let $F,G \in \map(S_g)$ be two nontrivial mapping classes such that at least one of $F$ or $G$ is of infinite order and neither $F$ nor $G$ is pseudo-Anosov. Assume that $F,G$ have degrees $n,m$, with multitwist components $$T_{c_1}^{q_1}T_{c_2}^{q_2}\cdots T_{c_{\ell}}^{q_{\ell}} \text{ and } T_{c'_1}^{q'_1}T_{c'_2}^{q'_2}\cdots T_{c'_{\ell'}}^{q'_{\ell'}}, $$ respectively, where $q_i,q_i'\in \Z\setminus \{ 0 \}$, $\C(F)=\{c_1,c_2,\dots,c_{\ell}\}$, and $\C(G)=\{c'_1,c'_2,\dots,c'_{\ell'}\}$. Then $\langle F,G \rangle$ is an infinite metacyclic subgroup with $\langle F \rangle\lhd\langle F,G \rangle$ if and only if the following conditions hold.
\begin{enumerate}[(i)]
\item $\C(F)\cup \C(G)$ is a multicurve.
\item If $F$ is periodic with $G^{-1}FG=F^k$, then $k^m\equiv 1\pmod n$.
\item Define $A_i:=\{c_j\in \C(F)~|~q_j=q_i\}$, $B_i:=\{c_j\in \C(F)~|~q_j=kq_i\}$, and $C_i:=\{c'_j\in \C(G)~|~q'_j=q'_i\}$. Then $G(A_i)=B_i$, $G(B_i)=A_i$, and $F(C_i)=C_i$ for every $i$.
\item For every path component $R$ of $S_g(\mathcal{C}(F)\cup \mathcal{C}(G))$, $G_r^{-1}F_rG_r=F_r^{k^{p_r}}$, where $G_r, F_r\in \map(R)$ are induced by $G,F$, respectively, and $p_r$ is the size of orbit of $R$ under $G$.
\item For two path components $R,S$ of $S_g(\mathcal{C}(F)\cup \mathcal{C}(G))$ such that $G(R)=S$, $F_r^k$ is conjugate to $F_s$, where $F_r\in \map(R), F_s\in \map(S)$ are induced by $F$.
\end{enumerate} 
\end{theorem}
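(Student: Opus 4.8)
The plan is to prove both implications by passing to the (possibly disconnected) surface $S_g(\C(F)\cup\C(G))$ obtained by cutting along the combined reduction system, and then tracking the relation $G^{-1}FG=F^k$ across the resulting pieces; throughout I use the standing hypothesis $\langle F\rangle\lhd\langle F,G\rangle$. For the forward implication, I would first note that $\C(G^{-1}FG)=G^{-1}(\C(F))$ together with $G^{-1}FG=F^k$ and $\C(F^k)=\C(F)$ forces $G(\C(F))=\C(F)$, and that the conjugate $F^{-1}GF=GF^{1-k}$ (Lemma~\ref{lem:relation}) shows, after restriction to the part supported on $\C(G)$, that $F$ permutes $\C(G)$. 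To obtain condition (i) I would reduce to the commuting case. If $F$ is periodic then $\C(F)=\emptyset$ and (i) is automatic, so assume $F$ has infinite order: raising the relation to the $n$-th power gives $G^{-1}F^nG=(F^n)^k$, and comparing multitwist components via Lemma~\ref{lem:equal_multitwist} shows that $G$ scales the nonzero twist coefficients $q_i$ by $k$; since $\C(F)$ is finite, multiplication by $k$ cannot have infinite orbits, which forces $k=\pm1$. Hence $G^2$ centralizes $F$, so $\langle F,G^2\rangle$ is abelian, and the commuting criterion of Ivanov~\cite[Theorem 7.5A]{ivanov2} yields that $\C(F)\cup\C(G^2)=\C(F)\cup\C(G)$ is a multicurve.

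The comparison of multitwist components just described is also the source of condition (iii): matching $\{(G^{-1}(c_i),q_i)\}$ with $\{(c_j,kq_j)\}$ through Lemma~\ref{lem:equal_multitwist} gives exactly $G(A_i)=B_i$ and (using $k=\pm1$ in the infinite-order case) $G(B_i)=A_i$, while restricting $F^{-1}GF=GF^{1-k}$ to the part supported on $\C(G)$, which is disjoint from $\C(F)$ by (i), gives $F(C_i)=C_i$. For condition (ii), when $F$ is periodic of order $n$ I would apply the relation $m$ times to get $G^{-m}FG^m=F^{k^m}$; since $G$ has degree $m$, the class $G^m$ is supported off $\C(F)$ and commutes with the periodic $F$ precisely because $F$ preserves $\C(G)$ together with its twist data (condition (iii)), which forces $k^m\equiv 1\pmod n$.

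Next I would extract the piecewise conditions (iv) and (v) by restricting $G^{-1}FG=F^k$ to $S_g(\C(F)\cup\C(G))$. The map $G$ permutes the components $R$ in orbits of size $p_r$, and restricting the relation around a single orbit yields $G_r^{-1}F_rG_r=F_r^{k^{p_r}}$ on a component fixed setwise by $G^{p_r}$, together with the conjugacy $F_r^k\sim F_s$ of the induced $F$-pieces along an edge $G(R)=S$ of the orbit. For the converse I would reverse this analysis: condition (i) provides a common cutting multicurve; conditions (iii), (iv), and (v) allow me to assemble representatives whose product realizes $G^{-1}FG=F^k$ exactly, using the uniqueness of the canonical decomposition and Lemma~\ref{lem:equal_multitwist} to exclude stray twisting; and condition (ii) guarantees that the induced orders on the periodic part are compatible, so that $\langle F,G\rangle$ admits one of the infinite metacyclic presentations recorded in Subsection~\ref{subsec:meta_group}. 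The group is \emph{infinite} precisely because at least one of $F,G$ has infinite order.

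I expect the principal difficulty to lie in the piecewise bookkeeping: correctly accounting for how $G$ permutes the components of the cut surface and how conjugation transforms both the twist coefficients and the pseudo-Anosov pieces along a $G$-orbit, and, in the converse direction, upgrading the component-wise relations (which a priori hold only up to isotopy on each piece) to the exact global identity $G^{-1}FG=F^k$. Establishing condition (i)—the disjoint realizability of $\C(F)\cup\C(G)$—in the presence of pseudo-Anosov canonical components is the essential geometric input, and the reduction $k=\pm1$ to the commuting case is what makes it tractable.
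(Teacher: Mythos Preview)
Your overall strategy matches the paper's: compare multitwist components of $G^{-1}F^nG=(F^n)^k$ via Lemma~\ref{lem:equal_multitwist} to get $G(A_i)=B_i$, reduce to $k=\pm1$ so that $G^2$ centralizes $F$, and then restrict the relation to the components of $S_g(\C(F)\cup\C(G))$ for (iv)--(v); the converse is likewise obtained by reassembling the componentwise relations. Two points where the paper's execution differs from yours are worth noting.

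First, for condition~(i) the paper does not invoke Ivanov's commutation criterion. Once $G(\C(F))=\C(F)$ is established, $\C(F)$ is a reduction system for $G$, hence lies in some \emph{maximal} reduction system $M$ of $G$; since $\C(G)$ is by definition the intersection of all such $M$, one gets $\C(F)\cup\C(G)\subseteq M$ directly. Your appeal to \cite[Theorem~7.5A]{ivanov2} is a detour (that theorem is stated for pure elements), and the equality $\C(G^2)=\C(G)$ you use tacitly also needs a word of justification.

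Second, your route to $F(C_i)=C_i$ via $F^{-1}GF=GF^{1-k}$ is awkward when $k=-1$: the right-hand side is $GF^2$, whose ``part supported on $\C(G)$'' is not transparently the multitwist of $G$. The paper instead uses $FG^2F^{-1}=G^2$ (available since $k=\pm1$), raises to the $m$th power, and compares multitwist components directly. Similarly, your argument for (ii) contains a slip: when $F$ is periodic $\C(F)=\emptyset$, so ``$G^m$ is supported off $\C(F)$'' is vacuous, and the assertion that $G^m$ commutes with $F$ does not follow merely from $F$ preserving the twist data on $\C(G)$ (there may be pseudo-Anosov components of $G^m$). The paper instead first passes to $G^{ma}$ with $a=|k|$ to force commutation with $F$, extracts $F(C_i)=C_i$ from that, and then deduces $F^{k^m-1}=1$ from the identity $G^{-m}(FG^mF^{-1})=F^{k^m-1}$.
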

\begin{proof}
Let $\langle F,G \rangle$ be an infinite metacyclic subgroup of $\map(S_g)$. First, we assume that $F$ has infinite order. Since $G^{-1}FG = F^k$, where $k=\pm 1$, we have $G^{-1}F^nG = F^{kn}$, and so their multitwist components are equal, that is, $$T_{G^{-1}(c_1)}^{q_1}T_{G^{-1}(c_2)}^{q_2}\cdots T_{G^{-1}(c_{\ell})}^{q_{\ell}} = T_{c_1}^{kq_1}T_{c_2}^{kq_2}\cdots T_{c_{\ell}}^{kq_{\ell}}.$$ By Lemma \ref{lem:equal_multitwist}, it follows that $$\{T_{G^{-1}(c_i)}^{q_i} ~|~ 1 \leq i \leq \ell\} = \{T_{c_j}^{kq_j} ~|~ 1 \leq j \leq \ell\},$$ and so $G(A_i) = B_i$ and $G(B_i)=A_i$ for every $i$. Hence, $G(\C(F))=\C(F)$. Since $k=\pm 1$, $G^2$ commutes with $F$. By comparing the multitwist components in $FG^2F^{-1}=G^2$, we have $F(C_i)=C_i$ for every $i$. As $\C(G)$ is the intersection of all maximal reduction system of $G$, $\C(G)$ is contained in the maximal reduction system of $G$ containing $\C(F)$. Therefore, it follows that $\C(F)\cup \C(G)$ is a multicurve. The same conclusion holds trivially for the case when $F$ is periodic.  

Suppose that $G$ has infinite order and $F$ is periodic. Since $G^{-1}FG=F^k$, we have $G^{-ma}FG^{ma}=F^{k^{ma}}=F$, where $a=|k|$.  By comparing the multitwist components in $FG^{ma}F^{-1}=G^{ma}$, it follows that $$T_{F(c'_1)}^{aq'_1}T_{F(c'_2)}^{aq'_2}\cdots T_{F(c'_{\ell'})}^{aq'_{\ell'}}=T_{c'_1}^{aq'_1}T_{c'_2}^{aq'_2}\cdots T_{c'_{\ell'}}^{aq'_{\ell'}}.$$ By Lemma \ref{lem:equal_multitwist}, we have $F(C_i) = C_i$ for each $i$, and so $F(\C(G))=\C(G)$. Since $F(\C(G))=\C(G)$ and $G^{-m}(FG^mF^{-1})=F^{k^m-1}$, it follows that $F^{k^m-1}=1$. Therefore, $k^m\equiv 1\pmod n$, and we have established $(i)-(iii)$.

Restricting the relation $G^{-1}FG=F^k$ to a path component $R$ of $S_g(\C(F)\cup \C(G))$ gives $(G^{-p_r}FG^{p_r})\big |_R =F^{k^{p_r}}\big|_R$, where $p_r$ is the size of the orbit of $R$ under $G$. Therefore, $G_r^{-1}F_rG_r=F_r^{k^{p_r}}$, where $G_r,F_r\in \map(R)$ are induced by $G,F$, respectively. For two distinct path components $R,S$ of $S_g(\C(F)\cup \C(G))$ such that $G(R)=S$, restricting the relation $G^{-1}FG=F^k$ to $R$, it follows that $F_s$ is conjugate to $F_r^k$, where $F_r\in \map(R), F_s\in \map(S)$ are induced by $F$. This completes the argument for $(iv)-(v)$.
	
Conversely, we assume that $F$ and $G$ satisfies $(i)-(v)$. Since the relations $G_r^{-1}F_rG_r=F_r^{k^{p_r}}$ holds in $\map(R)$ for every path component $R$ of $S_g(\C(F)\cup \C(G))$, it follows from conditions $(i)-(iii),\ (v)$ that the relation $G^{-1}FG=F^k$ holds in $\map(S_g)$. Hence, $\langle F,G \rangle$ is an infinite metacyclic subgroup.
\end{proof}

\noindent We have the following direct consequence of Theorem \ref{thm:main_thm1}.

\begin{cor}
\label{cor:main}
For $g\geq 2$, let $F,G \in \map(S_g)$ be two nontrivial mapping classes such that at least one of $F$ or $G$ is of infinite order and neither $F$ nor $G$ is pseudo-Anosov. Let $\langle F,G\rangle$ be an infinite metacyclic subgroup of $\map(S_g)$ with $\langle F\rangle \lhd \langle F,G \rangle$. Then the following statements hold.
\begin{enumerate}[(i)]
\item $F$ and $G$ are reducible mapping classes.
\item If $F,G$ are of infinite order such that $G$ is of odd degree, then $\langle F,G \rangle$ is abelian.
\item If $G$ is of infinite order of degree $1$, then $\langle F,G \rangle$ is abelian.
\end{enumerate}
\end{cor}
\begin{proof}
By Theorem \ref{thm:main_thm1} (i),(iii), $F$ and $G$ preserve the multicurve $\C(F)\cup \C(G)$. Therefore, $F$ and $G$ are reducible mapping classes. Let $n,m$ denote the degrees of $F,G$, respectively, where $m$ is odd, and assume that $F,G$ are of infinite order. Since $\C(F)\cup \C(G)$ is a multicurve, comparing the multitwist components in $G^{-m}F^nG^m =F^{nk^m}$, it follows that $k^m = 1$. As $m$ is odd, $k =1$, which implies that $\langle F,G \rangle$ is abelian. Finally, when $F$ is periodic and $G$ is an infinite order reducible mapping class of degree $1$, by Theorem \ref{thm:main_thm1} (ii), we have that $\langle F,G\rangle$ is abelian.
\end{proof}

Now, we give several examples of infinite metacyclic subgroups of $\map(S_g)$ involving reducible generators. In the following example, we use the $n$-compatibility of cyclic actions to construct an infinite metacyclic subgroup of $\map(S_g)$ generated by a nontrivial periodic and a pseudo-periodic mapping class of infinite order.

\begin{exmp}
\label{exmp:zz2s3pp}
Let $\tilde{F},\tilde{G}\in \map(S_2)$ be periodic mapping classes with $$D_{\tilde{F}}=(3,0;(1,3),(1,3),(2,3)_1,(2,3)_1) \text{ and } D_{\tilde{G}}=(2,1;(1,2)_2,(1,2)_2),$$ respectively as in Figure \ref{fig:zz2s3pp}. From the theory developed in \cite{dhanwani}, there exist conjugates $F'$ and $G'$ of $\tilde{F}$ and $\tilde{G}$, respectively, that commute in $\map(S_g)$. We observe that the orbits corresponding to the cone points of $D_{\tilde{F}}$ (resp. $D_{\tilde{G}}$) with the same suffix are $1$-compatible with twist factor $1$ (resp. $1$-compatible). Hence, $F'$ and $G'$ extends to a pseudo-periodic $F$ and a periodic $G$ (represented by $\G$), respectively in $\map(S_3)$ such that $F^3=T_c$ and $D_G=(2,2,1;-)$. Since $\langle F',G'\rangle$ is abelian, from Theorem \ref{thm:main_thm1}, it follows that $\langle F,G \rangle \cong \Z \times \Z_2$.   
\begin{figure}[H]
\tiny
\begin{subfigure}{0.4\textwidth}
\centering
\labellist
\pinlabel $\G$ at 150 140
\pinlabel $\pi$ at 160 120
\pinlabel $c$ at 120 40
\endlabellist
\includegraphics[scale=0.6]{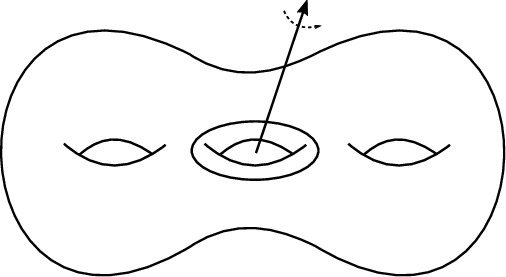}
\end{subfigure}
\begin{subfigure}{0.4\textwidth}
\centering
\labellist
\pinlabel $\G$ at 155 150
\pinlabel $\pi$ at 165 127
\pinlabel $c$ at 127 90
\pinlabel $(1,2)$ at 155 77
\pinlabel $(1,2)$ at 145 50
\pinlabel $(2,3)$ at 105 77
\pinlabel $(2,3)$ at 95 50
\pinlabel $(1,3)$ at 18 52
\pinlabel $(1,3)$ at 225 52
\endlabellist
\includegraphics[scale=0.6]{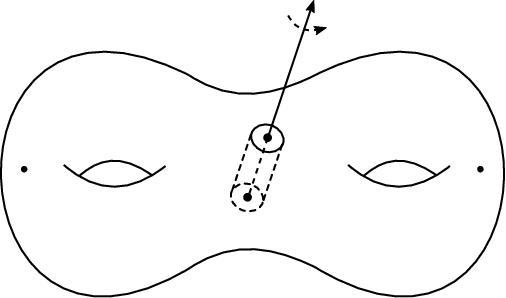}
\end{subfigure}
\caption{Realization of an infinite metacyclic subgroup $\langle F, G\rangle < \map(S_3)$ isomorphic to $\Z\times \Z_2$ generated by $F$ such that $F^3=T_c$ and a free involution $G$.}
\label{fig:zz2s3pp}
\end{figure}
\end{exmp}

\noindent The following corollary is a direct generalization of Example \ref{exmp:zz2s3pp}.

\begin{cor}
For $g\geq 2$, let $F\in \map(S_g)$ be a nontrivial periodic mapping class with $$D_F=(n,g_0;(a,n),(b,n),(c_1,n_1),\dots,(c_{\ell},n_{\ell})).$$ For $1<m<n$ and $m\mid n$ such that $\gcd(m,n/m)=1$ , there is an infinite metacyclic subgroup of $\map(S_{g+1})$ isomorphic to $\Z \times \Z_m$ if the following conditions hold.
\begin{enumerate}[(i)]
\item $a+b \equiv 0 \pmod m$.
\item $a^{-1}+b^{-1} \equiv k \pmod {n/m}$, where $k\in \Z_{n/m}\setminus \{0\}$.
\end{enumerate}
\end{cor}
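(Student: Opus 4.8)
The plan is to construct the two generators by hand on $S_{g+1}$ and then read off the group structure from Theorem \ref{thm:main_thm1}, mirroring the construction in Example \ref{exmp:zz2s3pp}. Let $P_1,P_2$ be the two cone points of order $n$ in $\Orb_F$, carrying local rotations $2\pi a^{-1}/n$ and $2\pi b^{-1}/n$; recall $\gcd(a,n)=\gcd(b,n)=1$. First I would delete small $\langle \F\rangle$-invariant disks around the two fixed points of $\F$ lying over $P_1,P_2$ and glue in an invariant annulus $N$ with core curve $c$, which adds a handle and produces $S_{g+1}$. By the root-of-a-Dehn-twist construction of Example \ref{exmp:compatibility}, the sum of the two local rotation angles is $2\pi(a^{-1}+b^{-1})/n \equiv 2\pi k'/n \pmod{2\pi}$, so condition (i), namely $k'=a^{-1}+b^{-1}\not\equiv 0 \pmod n$, lets $F$ extend across $N$ to a degree-$n$ root $\widehat F$ of $T_c^{k'}$. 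Thus $\widehat F$ is an infinite-order pseudo-periodic mapping class with $\C(\widehat F)=\{c\}$, and it furnishes the $\Z$-factor.

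For the $\Z_m$-factor I would exploit $\gcd(m,n/m)=1$. The power $F^{n/m}$ has order $m$ on $S_g$, fixes $P_1,P_2$, and there induces local rotations $2\pi a^{-1}/m$ and $2\pi b^{-1}/m$, whose sum is $2\pi(a^{-1}+b^{-1})/m$. Since $\gcd(a,n)=\gcd(b,n)=1$ makes $a,b$ units modulo $m$, we have $a+b\equiv 0 \iff a^{-1}+b^{-1}\equiv 0 \pmod m$, so condition (ii) forces this rotation sum to be an integral multiple of $2\pi$; hence $F^{n/m}$ extends across the same annulus $N$ with twist factor $0$, that is, to a genuinely periodic mapping class $G$ of order $m$ on $S_{g+1}$ (concretely $G=\widehat F^{\,n/m}T_c^{-k'/m}$, the correction being integral precisely because $m\mid k'$). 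Conditions (ii) and (iii) together, which by CRT and $\gcd(m,n/m)=1$ pin down $a+b \bmod n$ via $a+b\equiv k'\pmod{n/m}$, are what the realization theory of \cite{dhanwani} requires to extend both maps compatibly across the one annulus, so that $\widehat F$ keeps its full degree-$n$ twisting while $G$ remains periodic of order exactly $m$ and lies outside $\langle \widehat F\rangle$.

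Finally I would check that $\widehat F$ and $G$ commute: both fix $c$ and restrict on $S_{g+1}(c)\cong S_g$ to the commuting maps $F$ and $F^{n/m}$, while on $N$ they act by a fractional twist and a rotation about the common core $c$, which commute. With $\C(\widehat F)\cup\C(G)=\{c\}$ a multicurve and $k=1$, the relation $G^{-1}\widehat F G=\widehat F$ reduces component by component to this commuting, so Theorem \ref{thm:main_thm1} applies and shows that $\langle \widehat F,G\rangle$ is an infinite, necessarily abelian, metacyclic group; since $\widehat F$ has infinite order, $G$ has order $m$, and $G\notin\langle \widehat F\rangle$, it is isomorphic to $\Z\times\Z_m$. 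The main obstacle is exactly this simultaneous realization in the second step: arranging a single invariant annulus to carry both the full degree-$n$ twisting of $\widehat F$ and the untwisted order-$m$ rotation of $G$, and confirming that the resulting $G$ has order exactly $m$ rather than a proper divisor — this is the delicate point, and it is precisely the content encoded by the arithmetic conditions (ii) and (iii).
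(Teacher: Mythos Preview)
Your construction is correct and in fact slightly cleaner than what the paper sketches. The paper gives no proof of this corollary beyond calling it ``a direct generalization of Example~\ref{exmp:zz2s3pp}''; in that example two \emph{separate} commuting periodics $\tilde F$ (of order $n/m$) and $\tilde G$ (of order $m$) on $S_g$ are each extended across the same annulus --- $\tilde F$ with a nonzero twist factor to a root of $T_c$, and $\tilde G$ with twist factor $0$ to a genuine periodic --- and Theorem~\ref{thm:main_thm1} is invoked. Your route instead extends the single order-$n$ map $F$ to $\widehat F$ and then \emph{defines} $G=\widehat F^{\,n/m}T_c^{-k'/m}$ algebraically; this sidesteps any question of simultaneous realization, since both generators are visibly built from the commuting elements $\widehat F$ and $T_c$.

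The one loose end is your treatment of condition~(iii). Your actual argument uses only (i) to build $\widehat F$ and (ii) to guarantee $m\mid k'$ (via $a+b\equiv 0\Rightarrow a^{-1}+b^{-1}\equiv 0\pmod m$), so that $G$ is well defined; the verification that $|G|=m$, that $G\notin\langle\widehat F\rangle$, and that $\langle\widehat F,G\rangle\cong\Z\times\Z_m$ is pure group theory in $\langle\widehat F,T_c\rangle$ using only $\widehat F^{\,n}=T_c^{k'}$ with $k'\neq 0$. Your appeal to (iii) via ``the realization theory of \cite{dhanwani}'' is vague and does not match the construction you actually carried out --- there is no second map being independently realized, so nothing for that theory to constrain. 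This is not a gap in correctness (since (i)--(iii) trivially imply (i)--(ii), which is all you need), but you should either drop the reference to (iii) and note that your argument proves a slightly stronger statement, or, if you wish to follow the paper's two-map extension more literally, explain concretely where (iii) enters that version.
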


In the following example, we construct a non-abelian infinite metacyclic subgroup $\langle F,G\rangle$, where $F$ is a nontrivial reducible periodic mapping class.

\begin{exmp}
\label{exmp:zg-1zsgpp}
For $g\geq 1$, let $F_1,F_2\in \map(S_g)$ be two periodic mapping classes (see Figure \ref{fig:zg-1zsgpp}) with $$D_{F_1}=(2g+1,0;(1,2g+1),(g,2g+1)_1,(g,2g+1)_2) \text{ , and }$$ $$D_{F_2}=(2g+1,0;(2g,2g+1),(g+1,2g+1)_1,(g+1,2g+1)_2).$$ Since the orbits corresponding to the cone points with the same suffix are $1$-compatible, a periodic mapping class $F\in \map(S_{2g+1})$ can be constructed from $F_1,F_2$ with $$D_{F}=(2g+1,1;(1,2g+1),(2g,2g+1)).$$ Let $G'\in \map(S_{2g+1})$ be an involution represented by $\G'$ as shown in the figure with $D_{G'}=(2,g+1,1;-)$. From the theory developed in \cite{sanghi1}, we have $G'^{-1}FG'=F^{-1}$. Now, consider $G\in \map(S_{2g+1})$ such that $G=G'T_aT_b^{-1}$. Since $F(a)=a$ and $F(b)=b$, it follows that $G^{-1}FG=F^{-1}$. Hence, $\langle F,G \rangle \cong \Z_{2g+1} \rtimes_{-1} \Z $.
\begin{figure}[H]
\tiny
\labellist
\pinlabel $a$ at 225 0
\pinlabel $b$ at 223 120

\pinlabel $(1,2g+1)$ at -28 58
\pinlabel $(g,2g+1)$ at 162 100
\pinlabel $(g,2g+1)$ at 162 14

\pinlabel $(g+1,2g+1)$ at 300 17
\pinlabel $(g+1,2g+1)$ at 300 100
\pinlabel $(2g,2g+1)$ at 486 58

\pinlabel $\pi$ at 260 165
\pinlabel $\G'$ at 285 165
\endlabellist
\centering
\includegraphics[scale=0.63]{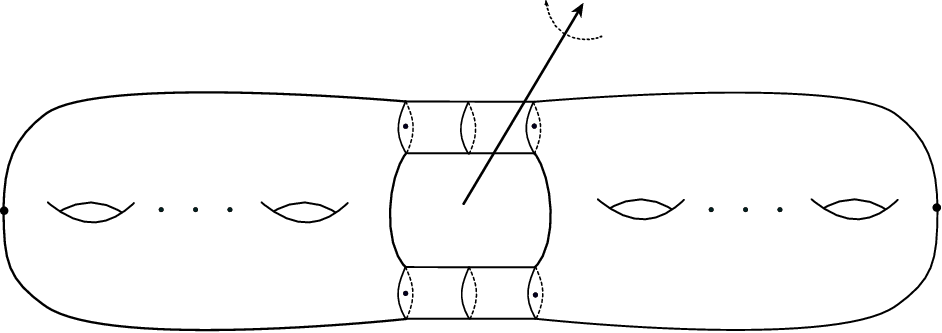}
\caption{Realization of an infinite metacylic subgroup $\langle F,G\rangle < \map(S_{{2g+1}})$ isomorphic to $\Z_{2g+1}\rtimes_{-1} \Z$ generated by a periodic mapping class $F$ of order $2g+1$ and $G$ such that $G^2=T_a^2T_b^{-2}$.}
\label{fig:zg-1zsgpp}
\end{figure}
\end{exmp}

In \cite[Example 4.19]{sanghi1}, an infinite metacyclic subgroup $\langle F,G \rangle < \map(S_g)$ was constructed, where $F$ was an infinite-order pseudo-periodic and $G$ was a nontrivial periodic mapping class such that $\langle \G \rangle$ acted non-transitively on the path components of $S_g(\C(F))$. We now provide an example in which the action of $\langle \G \rangle$ on $S_g(\C(F))$ is transitive.

\begin{exmp}
\label{exmp:z-1z2frees5}
For $g\geq 2$, let $F_1,F_2\in \map(S_g)$ be periodic mapping classes (see Figure \ref{fig:z-1z2frees5}) with $$D_{F_1}=(2g+1,0;(g,2g+1)_1,(1,2g+1)_2,(g,2g+1)) \text{ and }$$ $$D_{F_2}=(2g+1,0;(2g,2g+1)_1,(g+1,2g+1)_2,(g+1,2g+1)).$$ Here, the orbits corresponding to cone points with the same suffix are $1$-compatible with twist factor $\pm 3$. Thus, there exist a pseudo-periodic $F\in \map(S_{2g+1})$ with $F_1,F_2$ as its canonical components such that $F^{2g+1}=T_a^3T_b^{-3}$, where $\C(F)=\{a,b\}$ is a bounding pair. Let $G\in \map(S_{2g+1})$ be represented by a free involution $\G$ as shown in the figure with $D_G=(2,g+1,1;-)$. From Theorem \ref{thm:main_thm1}, it follows that $GFG^{-1}=F^{-1}$, and hence, $\langle F,G \rangle \cong \mathbb{Z}\rtimes_{-1} \mathbb{Z}_2$.
\begin{figure}[H]
\tiny
\labellist
\pinlabel $a$ at 225 0
\pinlabel $b$ at 223 120

\pinlabel $(g,2g+1)$ at -30 58
\pinlabel $(g,2g+1)$ at 165 100
\pinlabel $(1,2g+1)$ at 163 14

\pinlabel $(g+1,2g+1)$ at 300 17
\pinlabel $(2g,2g+1)$ at 292 100
\pinlabel $(g+1,2g+1)$ at 492 58

\pinlabel $\pi$ at 260 165
\pinlabel $\G$ at 285 165
\endlabellist
\centering
\includegraphics[scale=0.63]{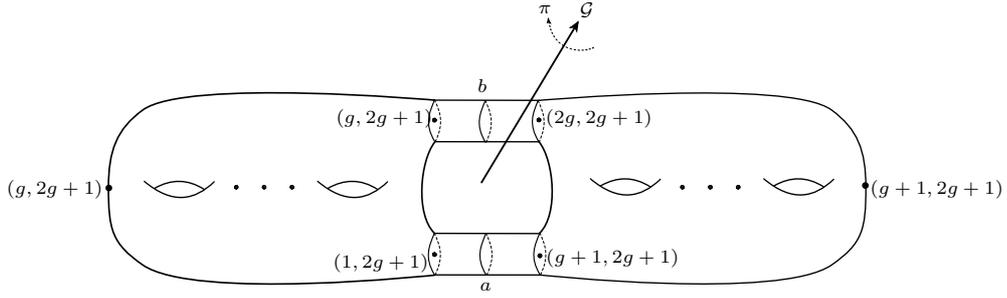}
\caption{Realization of an infinite metacyclic subgroup $\langle F,G \rangle <\map(S_{2g+1})$ isomorphic to $\mathbb{Z}\rtimes_{-1} \mathbb{Z}_2$ generated by an $F$ such that $F^{2g+1}=T_a^3T_b^{-3}$ and an involution $G$.}
\label{fig:z-1z2frees5}
\end{figure}
\end{exmp}

\noindent The constructions in Examples \ref{exmp:zg-1zsgpp} and \ref{exmp:z-1z2frees5} easily generalize to the following.

\begin{cor}
\label{cor:periodic_normal_1}
For $g\geq 2$, let $F \in \map(S_g)$ be a periodic mapping class with $$D_F = (n,g_0;(a,n),(b,n),(c_1,n_1), \dots, (c_{\ell},n_{\ell})),\text{ where }3\leq n\leq 4g.$$ Then the following statements hold.
\begin{enumerate}[(i)]
\item If $a=b$, then there is an infinite metacyclic subgroup of $\map(S_{2g+1})$ isomorphic to $\Z_n \rtimes_{-1} \Z$.
\item If $a\neq b$, then there is an infinite metacyclic subgroup of $\map(S_{2g+1})$ isomorphic to $\Z \rtimes_{-1} \Z_2$.
\end{enumerate}
\end{cor}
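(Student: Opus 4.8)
The plan is to realize the two groups by a single ``doubling'' construction and then certify the defining relation through Theorem~\ref{thm:main_thm1}, exactly generalizing Examples~\ref{exmp:zg-1zsgpp} and~\ref{exmp:z-1z2frees5}. Let $\F$ be a Nielsen representative of the $\Z_n$-action underlying $F$ on $S_g$, and let $P_a,P_b$ be the fixed points corresponding to the order-$n$ cone points $(a,n),(b,n)$, carrying local rotations $2\pi a^{-1}/n$ and $2\pi b^{-1}/n$. I would take a second copy $(S_g,\F^{-1})$, whose corresponding fixed points $P_a',P_b'$ carry local rotations $-2\pi a^{-1}/n$ and $-2\pi b^{-1}/n$, and join the two copies by two tubes with core curves $a,b$, reading off the resulting twist factors from the compatibility condition~\eqref{eqn:twist_compa}. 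Since each piece has genus $g$, joining them by two tubes produces $\Sigma=S_{2g+1}$, and the two local actions glue to a pseudo-periodic $\widetilde{F}\in\map(S_{2g+1})$ whose canonical components are $F$ and $F^{-1}$; this is where the component-conjugacy condition~(v) of Theorem~\ref{thm:main_thm1} with $k=-1$ is built in.

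The two cases then split according to how the tubes match the fixed points. When $a=b$, I would glue $P_a\leftrightarrow P_a'$ and $P_b\leftrightarrow P_b'$, so both twist factors equal $a^{-1}+(-a)^{-1}\equiv 0\pmod n$; hence $\widetilde{F}$ is genuinely periodic of order $n$, and the copy-swapping involution $G'$ fixes each of $a,b$ setwise. Setting $G=G'T_aT_b^{-1}$ gives $G^2=T_a^2T_b^{-2}$, so $G$ has infinite order, while $G'^{-1}\widetilde{F}G'=\widetilde{F}^{-1}$ together with the fact that $\widetilde{F}$ fixes $a,b$ forces $G^{-1}\widetilde{F}G=\widetilde{F}^{-1}$, giving $\langle \widetilde{F},G\rangle\cong\Z_n\rtimes_{-1}\Z$. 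When $a\neq b$, I would instead glue $P_a\leftrightarrow P_b'$ and $P_b\leftrightarrow P_a'$, so the twist factors become $t:=a^{-1}-b^{-1}$ and $-t$, which are nonzero because $x\mapsto x^{-1}$ is injective on $\Z_n^{\times}$. Thus $\widetilde{F}$ is a degree-$n$ root of the bounding-pair multitwist $T_a^{\,t}T_b^{-t}$, hence of infinite order, and here the swap involution $G'$ interchanges $a$ and $b$. Taking $G=G'$ then yields $\langle \widetilde{F},G\rangle\cong\Z\rtimes_{-1}\Z_2$.

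To finish, I would verify the hypotheses of Theorem~\ref{thm:main_thm1} for the pair $(\widetilde{F},G)$: condition~(i) holds since $\{a,b\}$ is a multicurve, condition~(iii) records precisely the action of $G$ on the curve-sets $A_i,B_i$ (in case~(ii) the swap $a\leftrightarrow b$ sends the twist-$t$ curve to the twist-$(-t)$ curve), and conditions~(iv)--(v) follow because $\{R_1,R_2\}$ is a single $G$-orbit of size $2$ with $k^2=1$ and the component of $\widetilde{F}$ on $R_2$ is $F^{-1}$. The main obstacle I expect is this verification, namely pinning down the twist factors with the correct signs so that the swap involution realizes $\widetilde{F}\mapsto\widetilde{F}^{-1}$ rather than fixing $\widetilde{F}$, and confirming that $G$ has the claimed order---infinite in case~(i) via $G^2=T_a^2T_b^{-2}$, and exactly $2$ in case~(ii)---so that $\langle \widetilde{F}\rangle\cap\langle G\rangle$ is trivial and the group matches the presentation list~\eqref{eqn:infinite_pres}. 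I would use $n\geq 3$ to ensure the semidirect products are nonabelian and note that the range $3\leq n\leq 4g$ guarantees that $F$ with two order-$n$ fixed points exists and keeps the construction non-degenerate.
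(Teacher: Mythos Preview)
Your proposal is correct and follows essentially the same approach as the paper, which does not give a separate proof but simply declares that the constructions in Examples~\ref{exmp:zg-1zsgpp} and~\ref{exmp:z-1z2frees5} ``easily generalize'' to the corollary. Your write-up spells out precisely that generalization---doubling $(S_g,\F)$ against $(S_g,\F^{-1})$, choosing the straight gluing $P_a\leftrightarrow P_a'$, $P_b\leftrightarrow P_b'$ in case~(i) and the crossed gluing $P_a\leftrightarrow P_b'$, $P_b\leftrightarrow P_a'$ in case~(ii), and then reading off the relation $G^{-1}\widetilde{F}G=\widetilde{F}^{-1}$ from Theorem~\ref{thm:main_thm1}---so there is nothing to add.
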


So far, we have only constructed infinite metacyclic subgroups with non-trivial periodic elements. In the next couple of examples, we construct infinite metacyclic subgroups that do not have any nontrivial periodic element.

\begin{exmp}
\label{exmp:zzsgpp}
For an odd integer $g>1$, let $G_1,G_2\in \map(S_g)$ be represented by a free involution $\G_1$ and a hyperelliptic involution $\G_2$ as in Figure \ref{fig:zzsgpp}. We observe that $\G_1$ and $\G_2$ commute. Consider $F_1,F_2,G\in \map(S_g)$ such that $F_1=G_2T_aT_b$, $F_2=G_2T_aT_b^{-1}T_eT_f^{-1}$, and $G=G_1 T_c$. Since $G^2=T_c^2$, $F_1^2=T_aT_bT_eT_f$, and $F_2^2=T_a^2T_b^{-2}T_e^2T_f^{-2}$, $F_1$, $F_2$, and $G$ are pseudo-periodic mapping classes. Now, it can be verified that $G^{-1}F_1G=F_1$ and $G^{-1}F_2G=F_2^{-1}$. Thus, we have $\langle F_1,G\rangle \cong \Z\times \Z$ and $\langle F_2, G \rangle \cong \Z\rtimes_{-1}\Z$.
\begin{figure}[H]
\tiny
\labellist
\pinlabel $a$ at 160 -10
\pinlabel $f$ at 270 -10
\pinlabel $e$ at 160 90
\pinlabel $b$ at 270 90
\pinlabel $c$ at 215 20

\pinlabel $\pi$ at 270 110
\pinlabel $\G_1$ at 268 133
\pinlabel $\pi$ at 483 27
\pinlabel $\G_2$ at 503 42
\endlabellist
\centering
\includegraphics[scale=0.65]{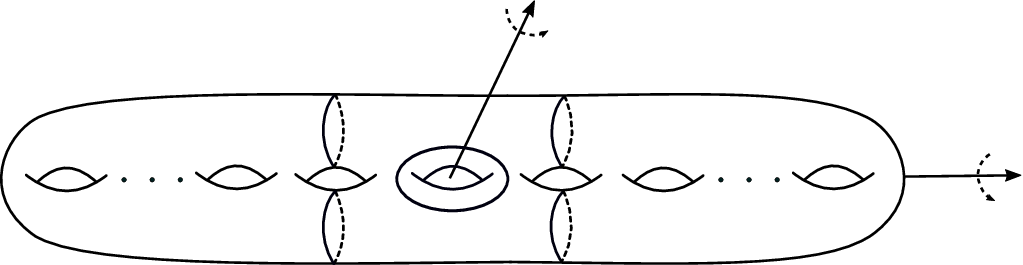}
\caption{Realization of an infinite metacyclic subgroup $\langle F,G\rangle < \map(S_g)$ isomorphic to $\Z\times \Z$ and $\Z\rtimes_{-1} \Z$ generated by two pseudo-periodic mapping classes $F$ and $G$.}
\label{fig:zzsgpp}
\end{figure}
\end{exmp}

\begin{exmp}
\label{exmp:z-1zsgpp}
For an even integer $g>2$, let $G_1,G_2\in \map(S_g)$ be represented by an involution $\G_1$ and a hyperelliptic involution $\G_2$ as in Figure \ref{fig:z-1zsgpp}. We observe that $G_1$ and $G_2$ commute. Consider $F,G\in \map(S_g)$ such that $F=G_2T_cT_d^{-1}$ and $G=G_1 T_aT_e$. Since $F^2=T_c^2T_d^{-2}$ and $G^2=T_aT_bT_eT_f$, $F$ and $G$ are pseudo-periodic mapping classes. It can be verified that $G^{-1}FG=F^{-1}$, and hence we have $\langle F,G\rangle \cong \Z\rtimes_{-1} \Z $. Considering $F' \in \map(S_g)$ such that $F' = G_2T_c T_d$, it can be seen that $\langle F',G \rangle \cong \Z \times \Z$.
\begin{figure}[H]
\tiny
\labellist
\pinlabel $a$ at 190 -10
\pinlabel $f$ at 240 -10
\pinlabel $e$ at 190 90
\pinlabel $b$ at 245 90
\pinlabel $c$ at 135 17
\pinlabel $d$ at 300 17

\pinlabel $\pi$ at 270 110
\pinlabel $\G_1$ at 268 133
\pinlabel $\pi$ at 483 27
\pinlabel $\G_2$ at 503 42
\endlabellist
\centering
\includegraphics[scale=0.65]{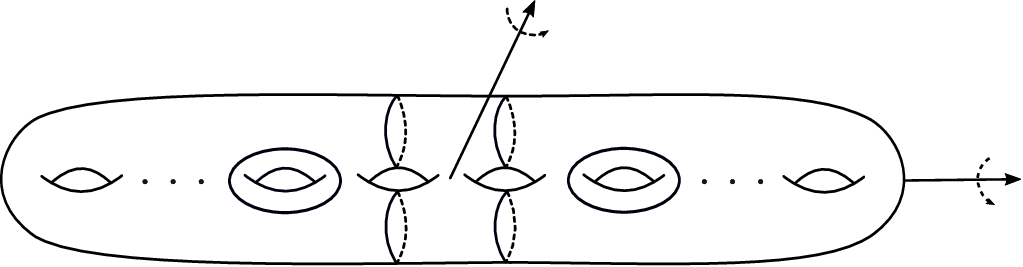}
\caption{Realization of an infinite metacyclic subgroup $\langle F,G\rangle < \map(S_g)$ isomorphic to $\Z\times \Z$ and $\Z\rtimes_{-1} \Z$ generated by two pseudo-periodic mapping classes $F$ and $G$.}
\label{fig:z-1zsgpp}
\end{figure}
\end{exmp}

Taking inspiration from \cite[Example 4.20]{sanghi1}, where a non-abelian infinite metacyclic subgroup was constructed with a nontrivial periodic generator, we will now describe an example where both generators are pseudo-periodics of infinite order.

\begin{exmp}
\label{exmp:z-1zs13pp}
Let $F,G'\in \map(S_{13})$ such that $F^3=T_{c_1}T_{c_2}^{-1}T_{c_3}T_{c_4}^{-1}$ and $G'$ represented by $\G'$ with $D_{\G'} = (4,4,1;-)$ (see Figure \ref{fig:z-1zs13pp}). In \cite[Example 4.20]{sanghi1}, it was shown that $G'FG'^{-1} = F^{-1}$, and therefore $\langle F, G' \rangle \cong \Z \rtimes_{-1} \Z_4 < \map(S_{13})$. Now, we consider $G\in \map(S_{13})$ such that $G=G'T_{c_1}$. Since $G^4=T_{c_1}T_{c_2}T_{c_3}T_{c_4}$, the $G$ is pseudo-periodic of degree $4$. As $F(c_1)=c_1$ and $$G^{-1}FG=T_{c_1}^{-1}G'^{-1}FG'T_{c_1}=T_{c_1}^{-1}F^{-1}T_{c_1}=F^{-1},$$ we have $\langle F,G\rangle \cong \Z\rtimes_{-1}\Z$.  
\begin{figure}[H]
\centering
\labellist
\tiny
\pinlabel $\G'$ at 275 595
\pinlabel $\frac{\pi}{2}$ at 300 555
\pinlabel $(1,3)$ at 295 485
\pinlabel $(1,3)$ at 215 460
\pinlabel $(1,3)$ at 160 465
\pinlabel $(2,3)$ at 350 485
\pinlabel $(2,3)$ at 410 455
\pinlabel $(2,3)$ at 470 467
\pinlabel $(1,3)$ at 270 315
\pinlabel $(1,3)$ at 330 340
\pinlabel $(1,3)$ at 310 300
\pinlabel $(2,3)$ at 70 300
\pinlabel $(2,3)$ at 80 350
\pinlabel $(2,3)$ at 115 315
\pinlabel $c_1$ at 500 350
\pinlabel $c_2$ at 185 340
\pinlabel $c_3$ at 100 185
\pinlabel $c_4$ at 345 185
\pinlabel $(1,3)$ at 255 208
\pinlabel $(1,3)$ at 215 225
\pinlabel $(1,3)$ at 190 180
\pinlabel $(2,3)$ at 420 205
\pinlabel $(2,3)$ at 455 180
\pinlabel $(2,3)$ at 470 233
\pinlabel $(1,3)$ at 290 45
\pinlabel $(1,3)$ at 300 67
\pinlabel $(1,3)$ at 365 80
\pinlabel $(2,3)$ at 68 65
\pinlabel $(2,3)$ at 120 80
\pinlabel $(2,3)$ at 135 45
\endlabellist		
\includegraphics[scale=0.45]{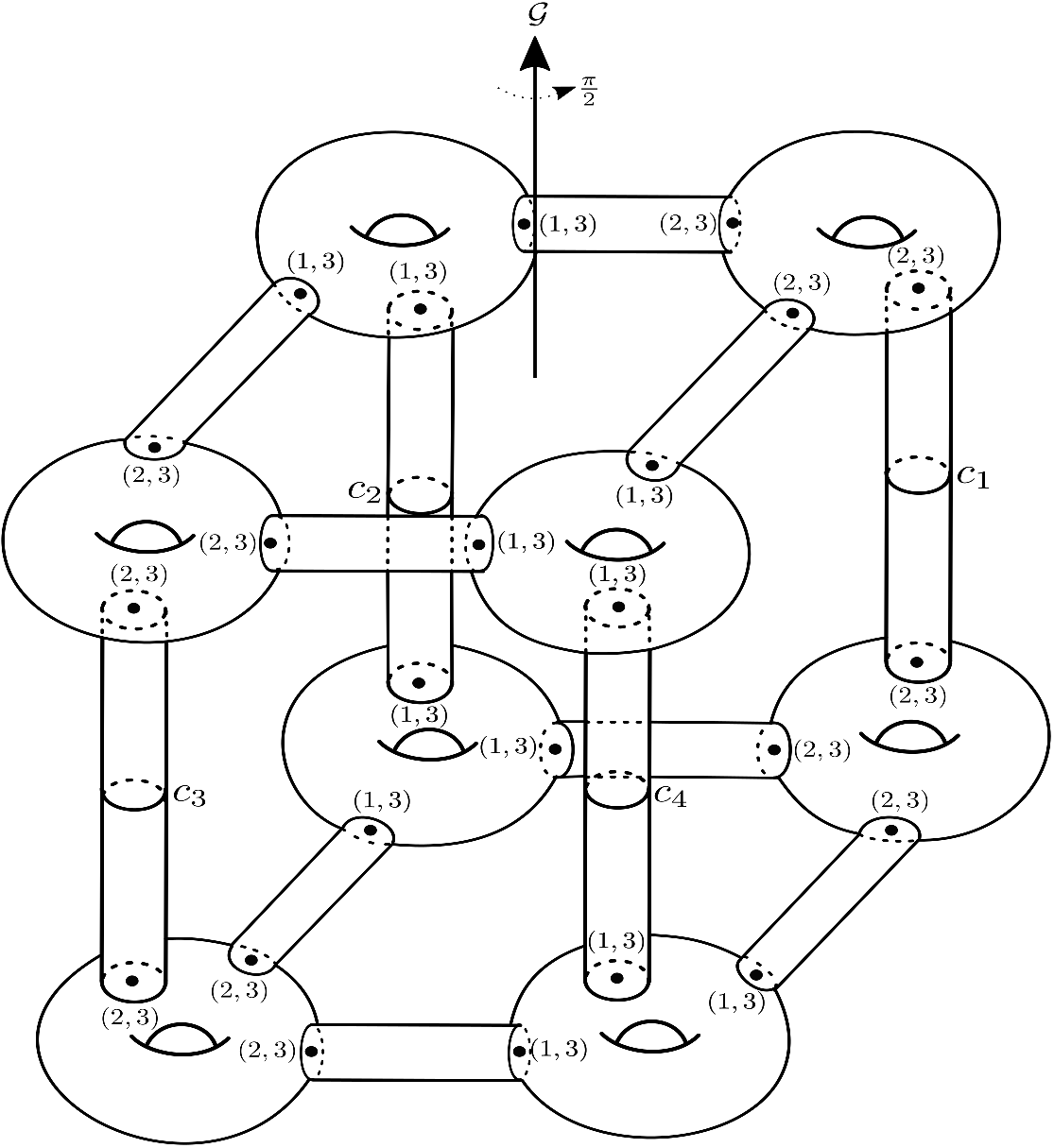}
\caption{Realization of an infinite metacyclic subgroup of $\map(S_{13})$ isomorphic to $\Z \rtimes_{-1} \Z$.}
\label{fig:z-1zs13pp}
\end{figure}
\end{exmp}

In the preceding examples, we saw infinite metacyclic subgroups with pseudo-periodic generators. In the following examples, we construct infinite metacyclic subgroups with an infinite order reducible generator with canonical components that are nontrivial periodic and pseudo-Anosov.

\begin{exmp}
\label{exmp:zz2sg}
For $g\geq 3$, consider the collection of curves as shown in Figure \ref{fig:zz2sg} and the mapping class $$F=T_{b_1}T_{b_2}T_{a_1}^{-1}T_{a_2}^{-1}T_{b_3} \prod_{i=3}^{g}T_{a_i}T_{b_{i+1}}.$$ Since $F(b_3)=b_3$, $F$ is a reducible mapping class of infinite order with pseudo-Anosov canonical component $T_{b_1}T_{b_2}T_{a_1}^{-1}T_{a_2}^{-1}$ and periodic canonical component $\textstyle \prod_{i=3}^{g}T_{a_i}T_{b_{i+1}}$. Let $G$ be the hyperelliptic involution as shown in the figure. Since $G^{-1}FG=F$, we have $\langle F,G\rangle\cong \Z\times \Z_2$.
\begin{figure}[H]
\centering
\labellist
\tiny
\pinlabel $b_1$ at -10, 40
\pinlabel $b_2$ at 105, 25
\pinlabel $b_3$ at 195, -5
\pinlabel $b_4$ at 280, 20
\pinlabel $b_g$ at 360, 20
\pinlabel $b_{g+1}$ at 475, 30

\pinlabel $a_1$ at 55, 15
\pinlabel $a_2$ at 150, 15
\pinlabel $a_3$ at 240, 15
\pinlabel $a_g$ at 405, 15

\pinlabel $\G$ at 525, 45
\pinlabel $\pi$ at 513, 24
\endlabellist
\includegraphics[scale=0.6]{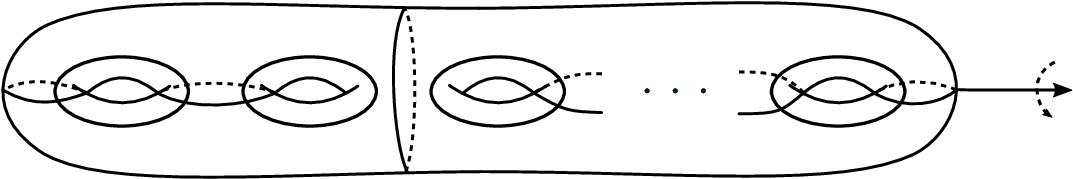}
\caption{Realization of an infinite metacyclic subgroup $\langle F,G\rangle < \map(S_g)$ isomorphic to $\mathbb{Z}\times \mathbb{Z}_2$ generated by a hyperelliptic involution $G$ and a reducible mapping class $F$ of infinite order.}
\label{fig:zz2sg}
\end{figure}
\end{exmp}

\noindent The construction in Example \ref{exmp:zz2sg} generalizes to the following assertion.

\begin{cor}
For $g\geq 2$, there is an infinite metacyclic subgroup $\langle F,G \rangle<\map(S_g)$ isomorphic to $\Z\times \Z_2$ generated by a hyperelliptic involution $G$ and a reducible mapping class of infinite order containing at least one pseudo-Anosov and one nontrivial periodic canonical component. 
\end{cor}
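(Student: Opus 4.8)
The plan is to generalize the construction of Example~\ref{exmp:zz2sg}, exploiting the fact that the hyperelliptic involution $G$ can be realized as the $\pi$-rotation of $S_g$ that fixes (as an unoriented isotopy class) every curve in the standard filling chain $\{a_1,\dots,a_g\}\cup\{b_1,\dots,b_{g+1}\}$ of Figure~\ref{fig:2g+1chain}. Since $G$ is orientation-preserving with $G(a_i)=a_i$ and $G(b_j)=b_j$ for all $i,j$, we have $GT_{a_i}G^{-1}=T_{a_i}$ and $GT_{b_j}G^{-1}=T_{b_j}$; hence $G$ commutes with \emph{any} word $F$ in these Dehn twists. Thus the whole problem reduces to producing a single word $F$ in the chain twists that is reducible of infinite order and whose canonical decomposition contains at least one pseudo-Anosov and one periodic component.

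For $g\geq 3$, I would take $F$ exactly as in Example~\ref{exmp:zz2sg}: the sub-word $T_{b_1}T_{b_2}T_{a_1}^{-1}T_{a_2}^{-1}$ is supported on the genus-two subsurface filled by the length-four sub-chain $\{b_1,a_1,b_2,a_2\}$, so by Theorem~\ref{thm:penner} it restricts to a pseudo-Anosov there, while the remaining sub-word $\prod_{i=3}^{g}T_{a_i}T_{b_{i+1}}$ is a chain twist along the even-length sub-chain $a_3,b_4,a_4,\dots,a_g,b_{g+1}$ filling a genus-$(g-2)$ subsurface with one boundary component. By the standard chain relation, an appropriate power of this chain twist equals the boundary Dehn twist, so after capping the boundary it becomes a nontrivial periodic mapping class, furnishing the periodic component. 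Confirming that $F$ fixes the reducing multicurve separating these two subsurfaces, and that the two pieces are precisely the pseudo-Anosov and periodic components, is the verification underlying the Nielsen--Thurston decomposition.

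Granting such an $F$, the group structure follows cleanly. Since $G$ commutes with $F$, the relation $G^{-1}FG=F$ holds and $\langle F,G\rangle$ is abelian. As the pseudo-Anosov component forces $F$ to have infinite order, $\langle F\rangle\cong\Z$ is torsion-free and hence cannot contain the order-two element $G$; consequently $\langle F\rangle\cap\langle G\rangle=\{1\}$ and $\langle F,G\rangle=\langle F\rangle\times\langle G\rangle\cong\Z\times\Z_2$.

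It remains to treat $g=2$ separately, since there the genus-two pseudo-Anosov block leaves no room for a second nontrivial periodic piece. Here I would instead take a separating curve $s$ splitting $S_2$ into two copies of $S_{1,1}$, place a Penner-type pseudo-Anosov of the once-punctured torus on one side and a nontrivial periodic map fixing the marked point on the other, arranged to fix $s$; the resulting $F$ is reducible of infinite order with canonical reduction system $\{s\}$ and the desired two components. Because the hyperelliptic involution is central in $\map(S_2)$, it automatically commutes with this $F$, and the same torsion argument gives $\langle F,G\rangle\cong\Z\times\Z_2$. The main obstacle throughout is not the group theory, which is immediate, but confirming that the canonically decomposed components of the constructed $F$ are exactly one pseudo-Anosov and one \emph{nontrivial} periodic mapping class --- in particular that the candidate periodic piece is genuinely of finite order after capping and is not a multitwist.
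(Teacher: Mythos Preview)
Your proposal is essentially the paper's own approach: the corollary is stated immediately after Example~\ref{exmp:zz2sg} with no separate proof, the paper simply declaring that ``the construction in Example~\ref{exmp:zz2sg} generalizes.'' Your write-up fleshes out that example, correctly using that the hyperelliptic involution fixes every curve in the chain (hence commutes with any word in those twists), and invoking Penner's criterion and the chain relation to identify the pseudo-Anosov and periodic components.

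One point worth noting: you go beyond the paper by treating $g=2$ separately. Example~\ref{exmp:zz2sg} is stated only for $g\geq 3$, and indeed its construction leaves no room for a nontrivial periodic piece when $g=2$. Your fix---splitting $S_2$ along a separating curve into two once-holed tori, placing $T_aT_b^{-1}$ on one side and a finite-order chain element on the other, and then appealing to the centrality of the hyperelliptic involution in $\map(S_2)$---is a clean and correct way to cover that case. A minor caution: in Example~\ref{exmp:zz2sg} the curve labelled $b_3$ is the \emph{separating} boundary of the genus-two subchain (see Figure~\ref{fig:zz2sg}), not the nonseparating chain curve of Figure~\ref{fig:2g+1chain}; your argument is unaffected, but be careful when citing the figure.
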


\begin{exmp}
\label{exmp:z-1z2s6}
Consider the collection of curves in $S_6$ as shown in Figure \ref{fig:z-1z2s6} and the mapping class $$F=(T_{b_1}T_{a_1}T_{b_2})(T_{c_1}T_{c_2}^{-1})(T_{b_3}T_{d_1}T_{a_4}T_{a_3}^{-1}T_{d_2}^{-1}T_{b_5}^{-1})(T_{c_3}T_{c_4}^{-1})(T_{b_6}^{-1}T_{a_6}^{-1}T_{b_7}^{-1}).$$ Since $F(\{c_1,c_2,c_3,c_4\})=\{c_1,c_2,c_3,c_4\}$, $F$ is a reducible mapping class of infinite order with two nontrivial periodic canonical components and one pseudo-Anosov canonical component. Let $G\in \map(S_6)$ be an involution as shown in the figure. Since $G^{-1}FG=F^{-1}$, $\langle F,G\rangle\cong \Z\rtimes_{-1} \Z_2$.
\begin{figure}[H]
\centering
\labellist
\tiny
\pinlabel $b_1$ at -10, 45
\pinlabel $b_2$ at 100, 30
\pinlabel $b_{3}$ at 167, 30
\pinlabel $b_5$ at 322, 30
\pinlabel $b_6$ at 388, 30
\pinlabel $b_7$ at 463, 30

\pinlabel $a_1$ at 60, 20
\pinlabel $a_3$ at 203, 23
\pinlabel $a_4$ at 282, 71
\pinlabel $a_6$ at 423, 22

\pinlabel $c_1$ at 137, -5
\pinlabel $c_2$ at 137, 100
\pinlabel $c_3$ at 350, -5
\pinlabel $c_4$ at 350, 100

\pinlabel $d_1$ at 210, 100
\pinlabel $d_2$ at 285, -5

\pinlabel $\pi$ at 300, 115
\pinlabel $\G$ at 290, 137
\endlabellist
\includegraphics[scale=0.6]{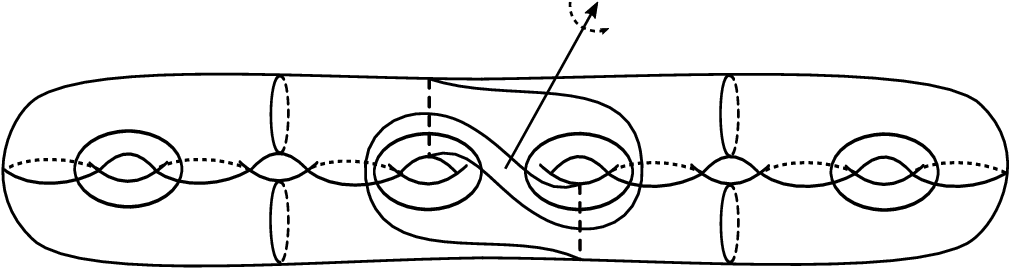}
\caption{Realization of an infinite metacyclic subgroup $\langle F,G\rangle < \map(S_6)$ isomorphic to $\mathbb{Z}\rtimes_{-1} \mathbb{Z}_2$ generated by an involution $G$ and a reducible mapping class $F$ of infinite order.}
\label{fig:z-1z2s6}
\end{figure}
\end{exmp}

\noindent A direct generalization of Example \ref{exmp:z-1z2s6} is the following result.

\begin{cor}
For an even integer $g\geq 4$, there is an infinite metacyclic subgroup $\langle F,G \rangle<\map(S_g)$ isomorphic to $\Z\rtimes_{-1} \Z_2$ generated by an involution $G$ with $D_G=(2,g/2;(1,2),(1,2))$ and a reducible mapping class of infinite order containing at least one pseudo-Anosov and one nontrivial periodic canonical components.
\end{cor}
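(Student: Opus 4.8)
The plan is to generalize the construction of Example~\ref{exmp:z-1z2s6} to an arbitrary even genus $g \geq 4$. I would realize $S_g$ symmetrically with respect to an order-two rotation $G$ chosen to fix exactly two points and to have quotient orbifold of genus $g/2$; by the Riemann--Hurwitz equation this pins down the signature, so that $G$ realizes the prescribed data set $D_G = (2,g/2;(1,2),(1,2))$. Concretely, I would split $S_g$ along a $G$-invariant family of separating curves $\{c_1,c_2,c_3,c_4\}$ into a central $G$-invariant subsurface $R_0$ together with a pair of end subsurfaces $R_1,R_2$ that are interchanged by $G$, mirroring the three-block pattern $(\,\cdot\,)(T_{c_1}T_{c_2}^{-1})(\,\cdot\,)(T_{c_3}T_{c_4}^{-1})(\,\cdot\,)$ visible in Example~\ref{exmp:z-1z2s6}.

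On $R_0$ I would place a filling pair of multicurves $A_0 \cup B_0$ and let the central block be a product of positive twists along $A_0$ and negative twists along $B_0$; by Theorem~\ref{thm:penner} this block is a pseudo-Anosov mapping class on $R_0$. On each end subsurface I would place a chain of curves so that the corresponding block is a product of twists along the chain, which by the standard chain relation becomes a \emph{periodic} mapping class on the capped-off component. Defining $F$ to be the product of these three blocks together with the bounding-pair multitwists $T_{c_1}T_{c_2}^{-1}$ and $T_{c_3}T_{c_4}^{-1}$ that separate them, the mapping class $F$ is then reducible of infinite order with canonical reduction system $\C(F) = \{c_1,c_2,c_3,c_4\}$, one pseudo-Anosov component on $R_0$ and periodic components on $R_1,R_2$, as required.

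The central verification is that $G^{-1}FG = F^{-1}$. I would arrange $G$ so that it carries each positively twisted curve of the central block to a negatively twisted one (and conversely) and interchanges the two end blocks while reversing the sign of their twists. Since $G$ is orientation-preserving, $G T_c G^{-1} = T_{G(c)}$, so this sign reversal yields $G^{-1}FG = F^{-1}$ once one checks that $G$ preserves $\C(F)$ and matches the induced components by comparing twist words via Lemma~\ref{lem:equal_multitwist}. Because $F$ has infinite order and $G^2 = 1$, the presentation~\eqref{eqn:infinite_pres} gives $\langle F,G\rangle \cong \Z \rtimes_{-1} \Z_2$; alternatively, one verifies conditions $(i)$--$(v)$ of Theorem~\ref{thm:main_thm1} directly, with orbit sizes $p_{R_0}=1$ and $p_{R_1}=p_{R_2}=2$.

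The main obstacle I anticipate is the genus and fixed-point bookkeeping: one must exhibit, uniformly in every even $g \geq 4$, a single symmetric configuration in which the central pseudo-Anosov block, the two periodic end blocks, and the separating curves together fill out genus exactly $g$, while the rotation $G$ has precisely two fixed points (so that its signature is forced to be $(g/2;2,2)$ rather than some competing involution type). Controlling the growth of the central block as $g$ increases—keeping the end blocks genuinely periodic and the central block $G$-invariant with the correct twist-reversal—is the delicate part; the verification of $G^{-1}FG = F^{-1}$ itself is then a routine comparison of twist words.
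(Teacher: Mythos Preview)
Your proposal is correct and follows exactly the approach the paper intends: the paper presents this corollary as ``a direct generalization of Example~\ref{exmp:z-1z2s6}'' without supplying any further argument, and your plan---growing the central $G$-invariant Penner block while keeping the two $G$-exchanged chain-relation ends fixed, and verifying $G^{-1}FG=F^{-1}$ by tracking how $G$ swaps positive and negative twist curves---is precisely that generalization. The genus bookkeeping you flag as the main obstacle is indeed the only thing left to write down, and the $g=6$ template makes it routine.
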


\section{Applications}
\label{sec:application}
In this section, we derive some applications of the theory developed in this paper.
\subsection{Infinite metacyclic subgroups of the level $m$ subgroup of $\map(S_g)$}
\label{subsec:level_m}
The action of $\map(S_g)$ on $H_1(S_g,\mathbb{Z})$ affords a surjective representation \cite[Chapter 6]{primer} $\Psi:\map(S_g)\longrightarrow \mathrm{Sp}(2g,\mathbb{Z})$. The subgroup $\ker \Psi$ is known as the \textit{Torelli group} and is denoted by $\mathcal{I}(S_g)$. Further, for an integer $m\geq 2$, \textit{the level $m$ congruence subgroup} is the kernel of the composition
\begin{center}
$\map(S_g)\longrightarrow \mathrm{Sp}(2g,\mathbb{Z})\longrightarrow \mathrm{Sp}(2g,\Z_m)$,
\end{center}
denoted by $\map(S_g)[m]$. By definition $\mathcal{I}(S_g)\subset \map(S_g)[m]$ for every $m$. For $m\geq 3$, it is known \cite[Chapter 6]{primer} that $\map(S_g)[m]$ is torsion free and that an infinite order reducible in $\map(S_g)[m]$ has degree $1$ \cite[Corollary 1.8]{ivanov}. The only torsion elements of $\map(S_g)[2]$ are the hyperelliptic involutions. The following result follows immediately from Theorem \ref{thm:main_thm2}, \ref{thm:main_thm1}, and Corollary \ref{cor:main}.

\begin{prop}
\label{prop:sub_level_m}
For $g\geq 2$ and $m\geq 3$, let $F,G \in\map(S_g)[m]$ be two nontrivial mapping classes. Then $\langle F,G\rangle$ is metacyclic with $\langle F\rangle\lhd\langle F,G\rangle$ if and only if the following hold.
\begin{enumerate}[(i)]
\item $F$ and $G$ are infinite order reducible mapping classes that commute.
\item $\C(F)\cup \C(G)$ is a multicurve.
\item The nontrivial canonical components of $F$ and $G$ are pseudo-Anosov mapping classes.
\item The nontrivial canonical components of $F$ and $G$ with the same support generate a cyclic group.
\end{enumerate}
\end{prop}

\noindent In the following examples, we construct infinite metacyclic subgroups of $\map(S_g)[2]$ with a pseudo-Anosov generator. Since hyperelliptic involution of $\map(S_2)$ lies in the center, we will assume $g\geq 3$.

\begin{exmp}
\label{exmp:hyperelliptic}
Consider the multicurves $A=\{a_1,a_2,\dots,a_g\}$ and $B=\{b_1,b_2,\dots,b_{g+1}\}$ as shown in the Figure \ref{fig:hyperelliptic}. Since curves of $A \cup B$ fills $S_g$, by Theorem \ref{thm:penner}, the mapping class $$F=\prod_{i=1}^gT_{a_i}^2\prod_{i=i}^{g+1}T_{b_i}^{-2}$$ is pseudo-Anosov. Let $G$ be the hyperelliptic involution as shown in Figure \ref{fig:hyperelliptic}. Since $G(c)=c$ for every $c\in A\cup B$, we have $G^{-1}FG=F$. As $F,G\in \map(S_g)[2]$, $\langle F,G \rangle < \map(S_g)[2]$ isomorphic to $\Z\times \Z_2$.
\begin{figure}[H]
\labellist
\tiny

\pinlabel $b_1$ at -10, 40
\pinlabel $b_2$ at 105, 25
\pinlabel $b_3$ at 195, 25
\pinlabel $b_4$ at 280, 20
\pinlabel $b_g$ at 360, 20
\pinlabel $b_{g+1}$ at 475, 30

\pinlabel $a_1$ at 55, 15
\pinlabel $a_2$ at 150, 15
\pinlabel $a_3$ at 240, 15
\pinlabel $a_g$ at 405, 15

\pinlabel $\G$ at 525, 45
\pinlabel $\pi$ at 513, 24

\endlabellist
\centering
\includegraphics[scale=0.6]{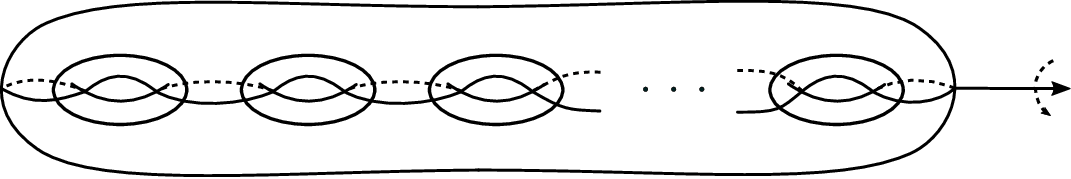}
\caption{Realization of an infinite metacyclic subgroup $\langle F,G\rangle< \map(S_g)[2]$ isomorphic to $\mathbb{Z}\times \mathbb{Z}_2$ generated by a hyperelliptic involution $G$ and a pseudo-Anosov $F$.}
\label{fig:hyperelliptic}
\end{figure}
\end{exmp}

\noindent The following example draws inspiration from a family of Penner-type pseudo-Anosov mapping classes described in~\cite{chris}.

\begin{exmp}
\label{exmp:pa_tg_level2}
For $g \geq 3$, we construct a non-abelian metacyclic subgroup of $\map(S_g)[2]$ generated by a pseudo-Anosov element in $\mathcal{I}(S_g)$. First we describe a filling collection of curves $C$ in $S_g$ which is a disjoint union of two multicurves $A$ and $B$. Consider the surfaces $S$, $S'$, and $S''$ with curves and arcs as shown in the Figure \ref{fig:pa_tg_1}. We construct a closed surface by combining multiple copies of $S$, $S'$, and $S''$ as follows. For $X_i\in \{S,S',S''\}$, we write $X_1+X_2\dots +X_n$ for the surface obtained by gluing $X_i$ end to end and capping the remaining boundary components after gluing. For $m\geq 1$, we write $mS$ for $S+S+\dots+S$. For $m\geq 1$, we can write $S_g=mS$ if $g=3m$, $S_g=S+S'+mS$ if $g=3m+4$, $S_g=S+S'+mS+S'+S$ if $g=3m+8$, $S_5=S''$, and $S_8=S''+S'$. The multicurves $A$ and $B$ are drawn with red and blue color, respectively. 
\begin{figure}[t]
\centering
\includegraphics[scale=0.7]{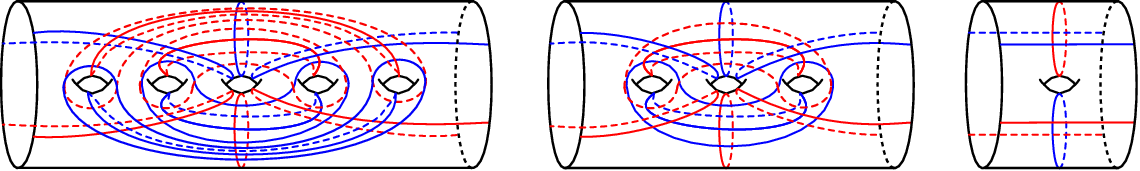}
\caption{The surface $S''$ on the left, $S$ in center, and $S'$ on the right used to construct a filling system of curves in $S_g$.}
\label{fig:pa_tg_1}
\end{figure}
\begin{figure}[t]
\centering
\labellist
\tiny

\pinlabel $\G$ at 470, 35
\pinlabel $\pi$ at 462, 47

\endlabellist
\includegraphics[scale=0.85]{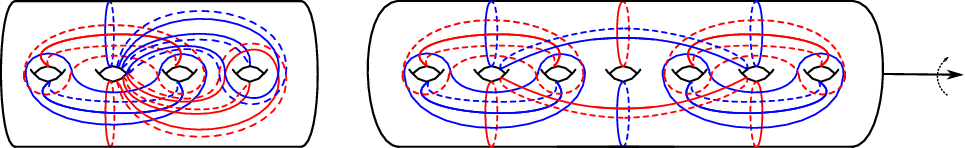}
\caption{A filling system of curves in $S_4$ and $S_7=S+S'+S$.}
\label{fig:pa_tg_2}
\end{figure}
\noindent Let $F$ be a product of positive (left-handed) Dehn twists about the curves in $A$ and negative Dehn twists about the curves in $B$, where each Dehn twist is taken exactly once. We observe that for each curve $a\in A$, there exist a unique curve $b\in B$ such that $\{a,b\}$ bounds a subsurface of $S_g$ and vice-versa. Since $A \cup B$ fills $S_g$, by Theorem \ref{thm:penner}, $F$ is a pseudo-Anosov. It can be seen that $F\in \mathcal{I}(S_g)$. Let $G$ be the hyperelliptic involution shown in Figure \ref{fig:pa_tg_2}. Since $G$ exchanges multicurves $A$ and $B$, we have $G^{-1}FG=F^{-1}$. Hence, $\langle F,G\rangle <\map(S_g)[2]$ is isomorphic to $\Z\rtimes_{-1} \Z_2$. 
\end{exmp}

\subsection{Bounds on the order of a periodic generator of an infinite metacyclic group}
\label{subsec:bounds}
In this subsection, we derive bounds on the order of a nontrivial periodic generator of an infinite metacyclic subgroup of $\map(S_g)$ that are realized. We will require the following remark.

\begin{rem}
\label{rem:2g+2}
For an even integer $g$, let $F\in \map(S_g)$ be a nontrivial reducible periodic mapping class of order $2g+2$. From the theory developed in \cite{rajeevsarathy1}, it follows that $F$ arises as a $1$-compatibility between fixed points of $F'$ and $F'^{-1}$, where $F'\in \map(S_{g/2})$ is a periodic mapping class of order $4(g/2)+2=2g+2$. Hence, $F$ has a unique maximal reduction system containing a single separating curve.  
\end{rem}

In the following proposition, we obtain bounds on the order of a nontrivial periodic generator that are realized.

\begin{prop}
\label{prop:bounds}
For $g\geq 2$, let $F,G \in \map(S_g)$ be two nontrivial mapping classes such that $\langle F,G\rangle$ is an infinite metacyclic subgroup with $\langle F\rangle\lhd\langle F,G\rangle$.
\begin{enumerate}[(i)]
\item Let $F$ be a pseudo-Anosov mapping class and $G$ be a periodic mapping class.
\begin{enumerate}
\item If $\langle F,G\rangle$ is non-abelian, then $2\leq |G|\leq 4g$.
\item If $\langle F,G\rangle$ is abelian, then $2\leq |G|\leq 2g$.
\end{enumerate}
\item Let $F$ be an reducible mapping class of infinite order and $G$ be a periodic mapping class.
\begin{enumerate}
\item If $\langle F,G\rangle$ is abelian, then $2\leq |G|\leq 2g+2$.
\item If $\langle F,G\rangle$ is non-abelian, then $2\leq |G|\leq 2g$.
\end{enumerate}
\item If $F$ is periodic and $\langle F,G\rangle$ is non-abelian, then $3\leq |F|\leq 2g+2$.
\end{enumerate}
Moreover, all of the above bounds are realized.
\end{prop}

\begin{proof}
\begin{enumerate}[(i)]
\item Suppose that $\langle F,G\rangle$ is non-abelian. Example \ref{cor:polygon} shows that an order $4g$ periodic mapping class can form a non-abelian metacyclic subgroup with $F$. Since it is known that $|G| \leq 4g+2$~\cite{harvey} and there is no periodic mapping class of order $4g+1$ (Lemma \ref{lem:4g+1}), it suffices to show that $|G| \neq 4g+2$. If $|G| = 4g+2$, then by Lemma \ref{lem:reducible_bound}, $G^2$ is irreducible. Furthermore, from Theorem \ref{thm:gilman}, it follows that $\Orb_{G^2} \approx S_{0,3}$. Since $G^2$ commutes with $F$, by Lemma \ref{lem:ind_auto}, $F$ induces an infinite order mapping class in the finite group $\map(S_{0,3})$, which is impossible. Thus, it follows that $|G| \leq 4g$.

Next, we consider the case when $\langle F,G \rangle$ is abelian. The preceding argument shows that $G$ is a reducible mapping class. By Lemma \ref{lem:reducible_bound}, it follows that $|G|\leq 2g+2$. Moreover, Example \ref{cor:polygon} shows that an order $2g$ periodic mapping class can form an infinite abelian metacyclic subgroup with $F$. Since $G$ is reducible, it suffices to show that $|G|\neq 2g+2$. Let $G$ be a reducible periodic mapping class of order $2g+2$ that commutes with $F$. By Remark~\ref{rem:2g+2}, $G$ has a unique maximal reduction system containing a single separating curve, say, $c$. Since $GF=FG$, we have $GF(c)=F(c)$, and so $F(c)=c$, which is not possible as $F$ is irreducible. Thus, it follows that $|G| \leq 2g$. Examples \ref{exmp:hyperelliptic} - \ref{exmp:pa_tg_level2} show that the lower bounds are realized.

\item Let $\langle F,G \rangle$ be abelian. From Corollary \ref{cor:main}, it follows that $G$ is reducible, and from Lemma \ref{lem:reducible_bound}, we have $|G|\leq 2g+2$. As before, a periodic mapping class of order $2g+2$ has a unique maximal reduction system $\{c\}$. Taking $F=T_c$, we have $G$ commutes with $F$. This shows that the upper bound $2g+2$ is realized when $\langle F,G \rangle$ is abelian.

Let $\langle F,G \rangle$ be non-abelian. If $|G|=2g+2$, then from Theorem \ref{thm:main_thm1}, it follows that $G(\C(F))=\C(F)$. Since $G$ has a unique maximal reduction system $\{c\}$, it follows that $\C(F)=\{c\}$ (as $\C(F)\neq \emptyset$). Therefore, the multitwist component of $F^n$ is $T_c^q$, for some $n\in \N$ and $q\in \Z\setminus \{0\}$. By comparing multitwist components in $G^{-1}F^nG=F^{-n}$, we have $G^{-1}T_c^qG=T_c^{-q}$. This is impossible since $G$ commutes with $T_c$. Hence, $|G|\leq 2g$ and Example \ref{cor:polygon_pp} shows that this upper bound is realized. Examples \ref{exmp:z-1z2frees5} and \ref{cor:polygon_pp} show that the lower bounds are realized.

\item Since $\langle F,G \rangle$ is non-abelian and $\langle F\rangle \lhd \langle F,G\rangle$, we have $k\in \Z_n^{\times}\setminus \{1\}$, where $n=|F|$. Hence, $n\geq 3$, and by Theorem \ref{thm:main_thm2} and Corollary \ref{cor:main}, $F$ is reducible. From Lemma \ref{lem:reducible_bound}, we have $n\leq 2g+2$. Thus, the assertion follows, and by Corollaries \ref{cor:periodic_normal_1} and \ref{cor:periodic_normal_2}, it follows that the bounds are realized.    
\end{enumerate}
\end{proof}

We will now provide examples demonstrating that the upper bound on the order of the periodic generator $G$ of the group $\langle F,G \rangle$ obtained in Proposition \ref{prop:bounds} is realized. 

\begin{exmp}
\label{cor:polygon}
For $g\geq 2$, let $G\in \map(S_g)$ be a periodic mapping class of order $4g$ realized as $2\pi/4g$-rotation of a $4g$-gon with side-pairing $a_1a_2\cdots a_{2g}a_1^{-1}a_2^{-1}\cdots a_{2g}^{-1}$ as shown in Figure \ref{fig:polygon} (for $g=2$).
\begin{figure}[h]
\labellist
\small

\pinlabel $a_1$ at 140, 5
\pinlabel $a_2$ at 255, 40
\pinlabel $a_3$ at 315, 130
\pinlabel $a_4$ at 295, 245

\pinlabel $a_1$ at 130, 310
\pinlabel $a_2$ at 35, 250
\pinlabel $a_3$ at 0, 120
\pinlabel $a_4$ at 65, 35

\pinlabel $c_1$ at 200, 50
\pinlabel $c_2$ at 265, 120
\pinlabel $c_3$ at 265, 200
\pinlabel $c_4$ at 200, 260

\pinlabel $c_1$ at 110, 265
\pinlabel $c_2$ at 60, 200
\pinlabel $c_3$ at 60, 110
\pinlabel $c_4$ at 120, 50

\pinlabel $\G$ at 160, 180
\pinlabel $\frac{2\pi}{8}$ at 185, 155

\endlabellist
\centering
\includegraphics[scale=0.45]{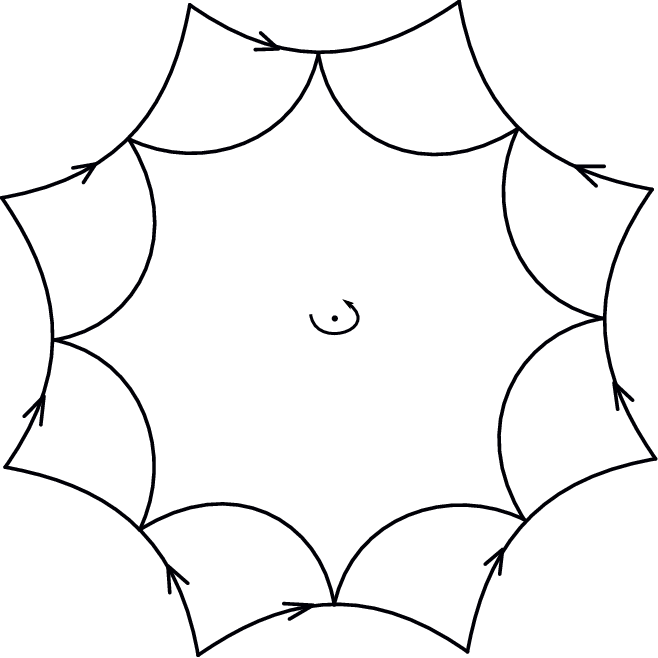}
\caption{Realization of an infinite metacyclic subgroup $\langle F,G\rangle < \map(S_2)$ isomorphic to $\mathbb{Z}\rtimes_{-1} \mathbb{Z}_8$ generated by an irreducible periodic mapping class $G$ of order $8$ and a pseudo-Anosov mapping class $F$.}
\label{fig:polygon}
\end{figure}
\noindent For $1\leq i\leq 2g-1$, let $c_i=a_ia_{i+1}$, $c_{2g}=a_{2g}a_1^{-1}$, $A=\{c_1,c_3,\dots,c_{2g-1}\}$, and $B=\{c_2,c_4,\dots,c_{2g}\}$. We note that $c_i$ is homotopic to the concatenation of $a_i$ and $a_{i+1}$. We observe that $A$ and $B$ are multicurves such that the curves in $A\cup B$ fill $S_g$. By Theorem \ref{thm:penner}, the mapping class $$F=\prod_{k=1}^gT_{c_{2k-1}}\prod_{k=1}^gT_{c_{2k}}^{-1}$$ is pseudo-Anosov. Since $G(c_i)=c_{i+1}$, where $1\leq i\leq 2g-1$ and $G(c_{2g})=c_1$, we have $G^{-1}FG=F^{-1}$. Hence, $\langle F,G\rangle \cong \mathbb{Z}\rtimes_{-1} \mathbb{Z}_{4g}$. Furthermore, since $G^{-2}FG^2=F$, we have $\langle F,G^2\rangle\cong \mathbb{Z}\times \mathbb{Z}_{2g}$.
\end{exmp}

\begin{exmp}
\label{cor:polygon_pp}
For even integer $g\geq 2$, let $G\in \map(S_g)$ be a periodic of order $2g$ realized as the square of $2\pi/4g$-rotation of a $4g$-gon with side-pairing $a_1a_2\cdots a_{2g}a_1^{-1}a_2^{-1}\cdots a_{2g}^{-1}$ as shown in Figure \ref{fig:polygon} (for $g=2$). Consider the  multicurve $\C = \{c_{2i-1}:=a_{2i-1}a_{2i} ~|~ 1 \leq i \leq g\}$, where $c_{2i-1}$ is homotopic to the concatenation of $a_{2i-1}$ and $a_{2i}$. Define the multitwist $$F=\prod_{i=1}^{g/2}T_{c_{4i-3}}T_{c_{4i-1}}^{-1}.$$ For $1\leq i\leq g-1$, as $G(c_{2i-1})=c_{2i+1}$ and $G(c_{2g-1}) = c_{1}$, we have $G^{-1}FG=F^{-1}$ and $G^{-g}FG^g=F$. Therefore, it follows that $\langle F,G\rangle \cong \mathbb{Z}\rtimes_{-1} \mathbb{Z}_{2g}$ and $\langle F,G^g\rangle \cong \mathbb{Z}\times\mathbb{Z}_{2}$.
\end{exmp}

The following example shows that the upper bound on the order of the periodic generator $F$ of a non-abelian metacyclic subgroup $\langle F, G \rangle$ obtained in Proposition \ref{prop:bounds} is realized when $G$ is reducible of infinite order.

\begin{exmp}
\label{exmp:z6-1zs2}
Let $F_1,F_2\in \map(S_1)$ be periodic with $$D_{F_1}=(6,0;(1,2),(1,3),(1,6)_1) \text{ and } D_{F_2}=(6,0;(1,2),(2,3),(5,6)_1).$$ Since the orbits corresponding to cone points with the same suffix are $1$-compatible, a periodic mapping class $F\in \map(S_2)$ can be constructed from $F_1,F_2$ with $$D_F=(6,0;(1,2),(1,2),(1,3),(2,3)).$$ Let $G'\in \map(S_2)$ be an involution represented by $\G'$ as shown in Figure \ref{fig:z6-1zs2} with $D_{G'}=(2,1;(1,2),(1,2))$. From the theory developed in \cite{sanghi1}, it follows that $G'^{-1}FG'=F^{-1}$. Now, consider $G\in \map(S_2)$ such that $G=G'T_c$. Since $F(c)=c$, we have $G^{-1}FG=T_c^{-1}G'^{-1}FG'T_c=F^{-1}$, and hence, $\langle F,G\rangle \cong \Z_6\rtimes_{-1}\Z$.
\begin{figure}[H]
\labellist
\tiny
\pinlabel $(5,6)$ at 325, 80
\pinlabel $(2,3)$ at 415, 120
\pinlabel $(2,3)$ at 415, 48
\pinlabel $(1,2)$ at 375, 150
\pinlabel $(1,2)$ at 375, 25
\pinlabel $(1,2)$ at 490, 85
	
\pinlabel $(1,6)$ at 140, 80
\pinlabel $(1,3)$ at 50, 120
\pinlabel $(1,3)$ at 50, 48
\pinlabel $(1,2)$ at 95, 150
\pinlabel $(1,2)$ at 95, 25
\pinlabel $(1,2)$ at -20, 85
	
\pinlabel $\pi$ at 310, 175
\pinlabel $\G'$ at 300, 210
\pinlabel $c$ at 233, 50
\endlabellist
\centering
\includegraphics[scale=0.5]{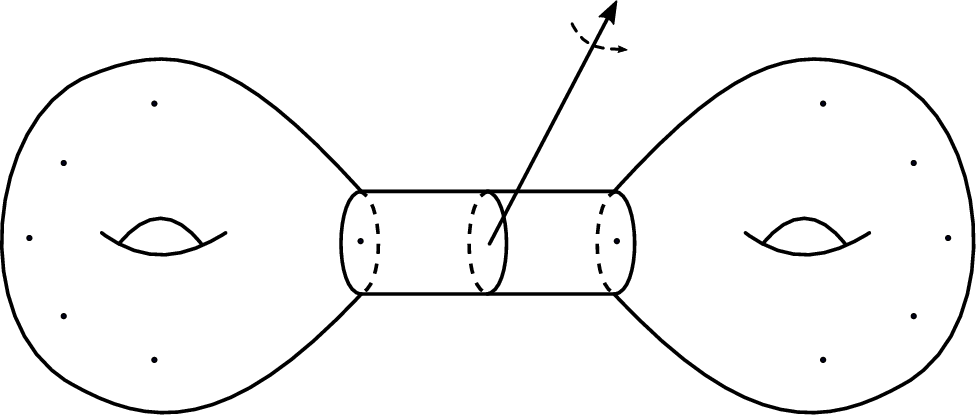}
\caption{An infinite metacyclic subgroup $\langle F,G\rangle<\map(S_{2})$ isomorphic to $\Z_6 \rtimes_{-1} \Z$ generated by an order $6$ mapping class $F$ and a pseudo-periodic mapping class $G$.}
\label{fig:z6-1zs2}
\end{figure}
\end{exmp}

\noindent Example \ref{exmp:z6-1zs2} generalizes to the following corollary.

\begin{cor}
\label{cor:periodic_normal_2}
For an even integer $g \geq 2$, there is an infinite metacyclic subgroup $\langle F,G \rangle <\map(S_g)$ isomorphic to $\Z_{2g+2}\rtimes_{-1} \Z$, where $F$ is a nontrivial periodic mapping class and $G$ is a pseudo-periodic mapping class of infinite order. 
\end{cor}

\subsection{Types of elements in an infinite metacyclic group}
\label{subsec:elements_type}
Let $\langle F,G\rangle < \map(S_g)$ be an infinite metacyclic subgroup with $\langle F\rangle\lhd\langle F,G\rangle$. In this subsection, we determine the Nielsen-Thurston type of the elements in $\langle F,G \rangle$ depending upon the Nielsen-Thurston type of $F,G$.
\begin{lem}
For $g\geq 2$, consider a (non-cyclic) metacyclic subgroup $\langle F,G\rangle < \map(S_g)$ that admits the presentation $\langle F,G ~|~ G^{-1}FG=F^k \rangle$, where $k=\pm 1$. Then every nontrivial element of $\langle F,G \rangle$ is a reducible mapping class of infinite order.
\end{lem}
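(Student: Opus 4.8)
The plan is to first pin down the abstract isomorphism type of the group and then use the two main theorems to force a single multicurve invariant under the whole group. By the classification of infinite metacyclic groups in (\ref{eqn:infinite_pres}), the presentation $\langle F,G \mid G^{-1}FG = F^k\rangle$ with $k = \pm 1$ means $\langle F,G\rangle \cong \Z \rtimes_k \Z$, which is $\Z \times \Z$ when $k=1$ and the Klein bottle group when $k=-1$. Both of these groups are torsion-free (for $k=-1$ one checks directly that an element $G^iF^j$ with $j$ odd squares to a nontrivial power of $G$, hence has no finite order), and in both the images of $F$ and $G$ have infinite order. Thus every non-trivial element of $\langle F,G\rangle$ has infinite order as a mapping class, and in particular none is periodic. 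This disposes of the ``infinite order'' half of the statement, and it remains to rule out pseudo-Anosov elements.

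The next step is to show that neither generator is pseudo-Anosov, which is where Theorem~\ref{thm:main_thm2} does the work. Suppose $F$ were pseudo-Anosov. Since $G$ has infinite order, Theorem~\ref{thm:main_thm2}(i)(a) rules out $G$ being an infinite-order reducible mapping class, so $G$ would be pseudo-Anosov; but then Theorem~\ref{thm:main_thm2}(i)(b) forces $\langle F,G\rangle \cong \Z$ or $\Z_n \times \Z$, neither of which is $\Z \rtimes_{\pm 1}\Z$. If instead $G$ were pseudo-Anosov, Theorem~\ref{thm:main_thm2}(ii) forces $\langle F,G\rangle$ to be abelian, hence $k=1$ and $\langle F,G\rangle \cong \Z\times\Z$; since the group is now abelian, $\langle G\rangle \lhd \langle F,G\rangle$, so I may apply Theorem~\ref{thm:main_thm2}(i)(a) to the pair $(G,F)$ to conclude $F$ is not infinite-order reducible, forcing $F$ to be pseudo-Anosov and returning us to the previous contradiction via Theorem~\ref{thm:main_thm2}(i)(b). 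Hence neither $F$ nor $G$ is pseudo-Anosov.

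With both generators of infinite order and neither pseudo-Anosov, Corollary~\ref{cor:main}(i) yields that $F$ and $G$ are reducible, and Theorem~\ref{thm:main_thm1}(i),(iii) gives that $\mathcal{M} := \C(F)\cup\C(G)$ is a multicurve with $G(\C(F))=\C(F)$ and $F(\C(G))=\C(G)$. Since $F$ preserves $\C(F)$ and $G$ preserves $\C(G)$ automatically, both $F$ and $G$, and therefore every element of $\langle F,G\rangle$, preserve $\mathcal{M}$. As $F$ is reducible, $\C(F)\neq\emptyset$, so $\mathcal{M}$ is a non-empty multicurve; any mapping class preserving a non-empty multicurve is reducible and in particular not pseudo-Anosov. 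Combining this with the first step shows that every non-trivial element of $\langle F,G\rangle$ is a reducible mapping class of infinite order.

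The main obstacle is the middle step: a priori a product $G^iF^j$ could be pseudo-Anosov even when the generators are tame, so the argument must produce one multicurve invariant under the entire group rather than reasoning element-by-element. The delicate point is the pseudo-Anosov exclusion for the generators, specifically the symmetric application of Theorem~\ref{thm:main_thm2}(i)(a) in the abelian case, which is legitimate precisely because every subgroup of an abelian group is normal; once the generators are known to be reducible with $\C(F)\cup\C(G)$ a multicurve, the common-invariant-multicurve conclusion is immediate.
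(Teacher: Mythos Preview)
Your proof is correct and follows the same strategy as the paper: establish that the group is torsion-free, invoke Theorem~\ref{thm:main_thm2} to rule out pseudo-Anosov generators, and then use Theorem~\ref{thm:main_thm1} to exhibit the common invariant multicurve $\C(F)\cup\C(G)$. The only slip is a typo in the torsion-free check for $k=-1$ (it is $i$ odd, not $j$ odd, that gives $(G^iF^j)^2=G^{2i}\neq 1$); your case analysis excluding pseudo-Anosov generators is in fact spelled out more carefully than the paper's one-line appeal to Theorem~\ref{thm:main_thm2}.
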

\begin{proof}
From Lemma \ref{lem:relation}, $F^iG^j=G^jF^{ik^j}$, so every element of $\langle F,G\rangle$ is of the form $G^iF^j$ for some integers $i,j$. For $G^iF^j\in \langle F,G\rangle$, from Lemma \ref{lem:relation}, we have
\begin{equation*}
(G^iF^j)^{\ell}=G^{i\ell}F^{j(1+k^i+k^{2i}+\cdots + k^{i(\ell-2)}+k^{i(\ell-1)})}.
\end{equation*}
It follows that every nontrivial element $G^iF^j\in \langle F,G \rangle$ has infinite order. Furthermore, by Theorem \ref{thm:main_thm2}, neither $G$ nor $F$ can be pseudo-Anosov mapping classes. When $G$ and $F$ are infinite order reducibles, from Theorem \ref{thm:main_thm1}, it follows that every nontrivial element of $\langle F,G \rangle$ preserves the multicurve $\mathcal{C}(F)\cup \mathcal{C}(G)$, and hence, is a reducible mapping class.
\end{proof}

\begin{lem}
For $g\geq 2$, consider a metacyclic subgroup $\langle F,G\rangle < \map(S_g)$ that admits the presentation $\langle F,G ~|~ F^n=1, G^{-1}FG=F^k \rangle$, where $n\geq 3$ and $k\in \Z_n^{\times}\setminus \{1\}$. Every nontrivial element of $\langle F,G \rangle$, except the powers of $F$, is of the same Nielsen-Thurston type as $G$.
\end{lem}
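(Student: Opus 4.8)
The plan is to reduce everything to the two main theorems together with Corollary~\ref{cor:main}, so that no new hard analysis is needed. The first step is to eliminate the pseudo-Anosov cases. Since $F$ has finite order it is not pseudo-Anosov, and since $k\in\Z_n^{\times}\setminus\{1\}$ the relation $G^{-1}FG=F^k$ is non-central, so $\langle F,G\rangle$ is non-abelian. If $G$ were pseudo-Anosov, Theorem~\ref{thm:main_thm2}(ii) would force $\langle F,G\rangle$ to be abelian, a contradiction; hence $G$ is not pseudo-Anosov either. Now $G$ has infinite order and neither generator is pseudo-Anosov, so the hypotheses of Corollary~\ref{cor:main} are met and part~(i) of that corollary gives at once that $F$ and $G$ are reducible. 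This disposes of the first assertion.

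For the second assertion I would produce a single multicurve preserved by the whole group. Set $M:=\C(F)\cup\C(G)$. By Theorem~\ref{thm:main_thm1}(i) this is a multicurve, and it is nonempty: $G$ is reducible of infinite order, so its canonical reduction system $\C(G)$ is nonempty. Theorem~\ref{thm:main_thm1}(iii) (as already extracted in the proof of Corollary~\ref{cor:main}) yields $F(\C(G))=\C(G)$ and $G(\C(F))=\C(F)$; combined with the tautological invariances $F(\C(F))=\C(F)$ and $G(\C(G))=\C(G)$, this shows that both $F$ and $G$ preserve $M$. Consequently every word in $F$ and $G$ preserves the nonempty multicurve $M$, so every element of $\langle F,G\rangle$ is reducible.

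Finally I would separate the orders. By Lemma~\ref{lem:relation}(i) every element is of the form $G^{i}F^{j}$, and the powers of $F$ are precisely those with $i=0$. For $i\neq 0$, the image of $G^{i}F^{j}$ under the quotient map $\langle F,G\rangle\to\langle F,G\rangle/\langle F\rangle\cong\Z$ is $i\neq 0$, which has infinite order; hence $G^{i}F^{j}$ has infinite order. (Equivalently, Lemma~\ref{lem:relation}(ii) gives $(G^{i}F^{j})^{\ell}=G^{i\ell}F^{(\cdots)}$, which is nontrivial for $\ell\neq 0$ because $G^{i\ell}\notin\langle F\rangle$.) Together with the reducibility just shown, this proves that every nontrivial element outside $\langle F\rangle$ is an infinite-order reducible mapping class.

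I do not expect a genuine obstacle, since the analytic content is packaged inside the cited results; the one point demanding care is the nonemptiness of $M$. Because $F$ is periodic, the paper's convention makes $\C(F)=\emptyset$, so the nonemptiness cannot be read off from $F$ and must instead come from $\C(G)\neq\emptyset$, which is exactly where the infinite-order reducibility of $G$ is used.
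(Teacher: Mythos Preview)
Your proof is correct and follows the same overall skeleton as the paper's: rule out pseudo-Anosov generators, deduce reducibility of $F$ and $G$, exhibit a common invariant multicurve to make every element reducible, and finally handle the orders. The differences are in two local steps, and both of your choices are cleaner than the paper's.

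For the reducibility of $F$, the paper argues directly via the orbifold: if $F$ were irreducible then $\Orb_F\approx S_{0,3}$ by Gilman's theorem, and Lemma~\ref{lem:ind_auto} would force $G$ to induce an infinite-order element of the finite group $\map(S_{0,3})$, a contradiction. You instead invoke Corollary~\ref{cor:main}(i), which already packages the observation that $F(\C(G))=\C(G)$ with $\C(G)\neq\emptyset$ forces $F$ to be reducible. This is shorter and avoids the orbifold machinery, at the cost of hiding the geometric reason inside the cited corollary.

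For the infinite-order claim, the paper performs an explicit computation with Lemma~\ref{lem:relation}(ii), taking $\ell=|F|\cdot|k|$ to kill the $F$-exponent and reduce $(G^iF^j)^\ell$ to $G^{i\ell}$. Your quotient-map argument ($\langle F,G\rangle\to\langle F,G\rangle/\langle F\rangle\cong\Z$) is more conceptual and sidesteps any arithmetic with $k$; it is the natural way to read off infinite order from the presentation $\Z_n\rtimes_k\Z$. Your caveat about the nonemptiness of $M$ relying on $\C(G)$ rather than $\C(F)$ is exactly right and matches what the paper uses implicitly.
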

\begin{proof}
We note that $G$ can be either pseudo-Anosov or reducible of infinite order. When $G$ is reducible, from Theorem \ref{thm:main_thm1}, we have $F(\mathcal{C}(G))=\mathcal{C}(G)$. Moreover, for $i\neq 0$, we consider $G^iF^j\in \langle F,G \rangle$ and set $\ell=|F||k|$. Then from Lemma \ref{lem:relation}, we have
\begin{align*}
(G^iF^j)^{\ell}&=G^{i\ell}F^{j(1+k^i+k^{2i}+\cdots + k^{i(\ell-2)}+k^{i(\ell-1)})}\\
&= G^{i\ell}F^{j|F|(1+k^i+k^{2i}+\cdots + k^{i(\ell-2)}+k^{i(|k|-1)})}=G^{i\ell}.
\end{align*}
Hence, every nontrivial element of $\langle F, G\rangle$, except the powers of $F$, have same Nielsen-Thurston type as $G$.
\end{proof}

\begin{lem}
\label{lem:pa_periodic}
For $g,m\geq 2$, consider a metacyclic subgroup $\langle F,G\rangle < \map(S_g)$ that admits the presentation $\langle F,G ~|~ G^m=1, G^{-1}FG=F^k \rangle$, $k=\pm 1$ (for $k=-1$, $m$ is even).
\begin{enumerate}[(i)]
\item If $\langle F,G\rangle$ is abelian, then every nontrivial element of $\langle F,G \rangle$, except the powers of $G$, has the same Nielsen-Thurston type as $F$.
\item If $\langle F,G \rangle$ is non-abelian, then for integers $i,j$, where $j\neq 0$, $G^iF^j$ has the same Nielsen-Thurston type as $F$ if $i$ is even, and $G^iF^j$ is periodic of order $|G^i|$ when $i$ is odd. Furthermore, for $i$ odd and $j$ even, $G^iF^j$ is conjugate to $G^i$.
\end{enumerate}
\end{lem}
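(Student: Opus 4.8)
The plan is to reduce everything to computations with the normal form $G^iF^j$ via Lemma~\ref{lem:relation}, combined with the standard fact that Nielsen--Thurston type is invariant under passing to any nonzero power (periodicity, pseudo-Anosovness, and infinite-order reducibility are each preserved by $\phi\mapsto\phi^{\ell}$, $\ell\neq 0$). Since $G^m=1$ and $\langle F,G\rangle$ is infinite, $F$ has infinite order, and by Theorems~\ref{thm:main_thm2} and~\ref{thm:main_thm1} it is either pseudo-Anosov or reducible of infinite order. For part (i) we have $k=1$, so Lemma~\ref{lem:relation}(ii) gives $(G^iF^j)^{\ell}=G^{i\ell}F^{j\ell}$; taking $\ell=|G|$ yields $(G^iF^j)^{|G|}=F^{j|G|}$, a nontrivial power of $F$ when $j\neq 0$, whence $G^iF^j$ has the same type as $F$ by power-invariance. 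The only exceptions are the elements with $j=0$, i.e.\ the powers of $G$. For the even-$i$ case of part (ii), note first that $k=-1$ forces $|G|$ even (from $G^{-|G|}FG^{|G|}=F^{(-1)^{|G|}}=F$); for $i$ even the exponent sum $1+k^i+\cdots+k^{i(\ell-1)}$ equals $\ell$, so again $(G^iF^j)^{|G|}=F^{j|G|}$ and $G^iF^j$ has the same type as $F$.

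For the odd-$i$ case, Lemma~\ref{lem:relation}(ii) gives $(G^iF^j)^{\ell}=G^{i\ell}F^{jS_{\ell}}$ where $S_{\ell}=\sum_{t=0}^{\ell-1}(-1)^{t}$ equals $0$ for $\ell$ even and $1$ for $\ell$ odd. In particular $(G^iF^j)^2=G^{2i}$ is periodic, so $G^iF^j$ is periodic. To identify its order, set $d=|G^i|$; since $|G|$ is even and $i$ is odd, $d$ is even, so $S_d=0$ and $(G^iF^j)^{d}=G^{id}=1$. For a proper divisor $\ell$ of $d$ I would check the two parities: if $\ell$ is even then $(G^iF^j)^{\ell}=G^{i\ell}\neq 1$ because $\ell<|G^i|$, and if $\ell$ is odd then $(G^iF^j)^{\ell}=G^{i\ell}F^{j}$ has nontrivial infinite-order part, hence is nontrivial. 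Thus $G^iF^j$ has order exactly $|G^i|$.

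The conjugacy statement carries the real content. The clean reduction is an algebraic identity: if $H_0$ is \emph{any} square root of $F^j$ with $G^{-i}H_0G^i=H_0^{-1}$, then (using $H_0G^i=G^iH_0^{-1}$ and $F^j=H_0^2$) one computes $H_0(G^iF^j)H_0^{-1}=(G^iH_0^{-1})H_0^2H_0^{-1}=G^i$, so $G^iF^j=H_0^{-1}G^iH_0$ is conjugate to $G^i$. Hence it suffices to produce such a root. First, conjugating by powers of $F$ already gives $G^iF^j\sim G^iF^{j+2s}$ (since $F^{-s}G^i=G^iF^{s}$ for $i$ odd), which settles the case $j$ even and explains why the square-root hypothesis is only needed for $j$ odd. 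In uniform terms: for $j$ even take $H_0=F^{j/2}\in\langle F\rangle$, which satisfies $G^{-i}H_0G^i=F^{-j/2}=H_0^{-1}$ automatically; for $j$ odd take any square root $H_0$ of $F^j$ (one exists by hypothesis) and observe that, since $G^{-i}F^jG^i=F^{-j}$, both $G^{-i}H_0G^i$ and $H_0^{-1}$ are square roots of $F^{-j}$, so they coincide \emph{provided square roots are unique}.

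The main obstacle is exactly this uniqueness of square roots of the infinite-order mapping class $F^{-j}$. When $F$ is pseudo-Anosov, I would argue that every square root of $F^{-j}$ is itself pseudo-Anosov with the same invariant foliations $\f_u,\f_s$, hence lies in $\mathcal{H}$ with dilatation $\lambda(F)^{-j/2}$; two such roots therefore agree under the dilatation homomorphism, so they differ by an element of the finite group $\ker\lambda$ (Lemma~\ref{lem:dilat_homo}), and the residual finite ambiguity is killed because genuine roots of the underlying affine (flat) structure are unique. When $F$ is reducible of infinite order, I would push the same argument through the canonical decomposition: compare multitwist components via Lemma~\ref{lem:equal_multitwist} and apply the pseudo-Anosov uniqueness on each pseudo-Anosov component of $F^{-j}$. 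Establishing this uniqueness cleanly is the crux of the argument; granting it, the conjugacy $G^iF^j\sim G^i$ follows immediately from the identity above, and the remaining assertions are bookkeeping with Lemma~\ref{lem:relation}.
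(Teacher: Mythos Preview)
Your treatment of part (i) and the type-classification half of (ii) matches the paper's argument, with more care than the paper gives in verifying that the order of $G^iF^j$ is exactly $|G^i|$.

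For the conjugacy assertion the paper's entire proof is the one-line computation
\[
H(G^iF^j)H^{-1}=HG^iH=G^iH^{1+(-1)^i}=G^i,
\]
where the middle equality silently treats the square root $H$ as satisfying $G^{-i}HG^i=H^{(-1)^i}$; the paper offers no justification for this. You go further than the paper by isolating precisely this point and proposing to derive $G^{-i}H_0G^i=H_0^{-1}$ from uniqueness of square roots of $F^{-j}$.

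The difficulty is that this uniqueness fails in general, so your proposed fix does not go through. For reducible $F$ the failure is immediate: already a power of a single Dehn twist $T_c$ admits many inequivalent roots, as the paper itself illustrates in Section~\ref{subsec:pp} (see Example~\ref{exmp:compatibility} and the surrounding discussion). Hence the inference ``$G^{-i}H_0G^i$ and $H_0^{-1}$ are both square roots of $F^{-j}$, so they coincide'' is invalid in that setting, and pushing the argument through the canonical decomposition cannot rescue it on the multitwist or periodic components. In the pseudo-Anosov case your dilatation argument only determines $H_0$ modulo the finite group $\ker\lambda$, and the appeal to ``genuine roots of the underlying affine structure'' is an assertion, not a proof. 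So while you have correctly located the crux that the paper glosses over, the route via uniqueness of roots does not close the gap.
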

\begin{proof}
When $\langle F,G \rangle$ is abelian, it follows that every nontrivial element of $\langle F,G \rangle$, except the powers of $G$, is of infinite order of the same Nielsen-Thurston type as of $F$. We now consider the case when $\langle F, G \rangle$ is non-abelian. Since $G^2$ commutes with $F$, $G^{2i}F^j$ has same Nielsen-Thurston type as $F$, where $j\neq 0$. By Lemma \ref{lem:relation}, $(G^iF^j)^2=G^{2i}F^{(1+({-1})^i)}=G^{2i}$ if and only if $i$ is odd. Thus, it follows that, for $j\neq 0$, $G^iF^j$ is periodic of order $|G^i|$ if and only if $i$ is odd. When $j$ is even and $i$ is odd, we have $F^{j/2}(G^iF^j)F^{-j/2}=F^{j/2}G^iF^{j/2}=G^iF^{(j[1+(-1)^i])/2}=G^i$. Therefore, $G^iF^{j}$ is conjugate to $G^i$.
\end{proof}

\subsection{Centralizers of irreducible periodic mapping classes}
\label{subsec:centralizer}
In this subsection, we describe the centralizers of irreducible periodic mapping classes in $\map(S_g)$. 
\begin{prop}
\label{prop:centralizer}
For $g\geq 2$, let $F\in \map(S_g)$ be an irreducible periodic mapping class with $D_F=(n,0;(c_1,n_1),(c_2,n_2),(c_3,n_3))$. Let $H$ be the centralizer of $F$ in $\map(S_g)$.
\begin{enumerate}[(i)]
\item If either $n>2g+2$, or the $(c_i,n_i)$ are all distinct for $i=1,2,3$, then $H=\langle F \rangle$. 
\item If $n \leq 2g+2$ and $(c_i,n_i)=(c_j,n_j)$ for some $i,j\in \{1,2,3\}$ and $i\neq j$, then $H=\langle F \rangle \times \langle i \rangle
$, where $i$ is a hyperelliptic involution.
\end{enumerate}
\end{prop}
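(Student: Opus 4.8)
The plan is to analyze the centralizer $H$ entirely through the automorphism each of its elements induces on the quotient orbifold $\Orb_F$, which by Gilman's theorem (Theorem~\ref{thm:gilman}) is a sphere $S_{0,3}$ with three cone points carrying the data $(c_i,n_i)$. Fix any $G\in H$. Since $G$ centralizes $F$ we have $G^{-1}FG=F$ (so $k=1$), $\langle F\rangle\lhd\langle F,G\rangle$, and $\langle F,G\rangle$ is abelian; by Lemma~\ref{lem:ind_auto}, $G$ induces $\bar G\in\aut(\Orb_F)$ with $\Orb_F/\langle\bar G\rangle=S_g/\langle F,G\rangle$. Comparing the degrees of the branched covers $S_g\to\Orb_F$, $\,\Orb_F\to\Orb_F/\langle\bar G\rangle$, and $S_g\to S_g/\langle F,G\rangle$ gives $|\langle F,G\rangle|=n\,|\bar G|$, hence $[\langle F,G\rangle:\langle F\rangle]=|\bar G|$. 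Thus $G\mapsto\bar G$ is a homomorphism $H\to\aut(\Orb_F)$ whose kernel is exactly $\langle F\rangle$ (if $\bar G=1$ then $|\langle F,G\rangle|=n$, so $G\in\langle F\rangle$). Since the orientation-preserving mapping class group of a thrice-marked sphere is $\map(S_{0,3})\cong S_3$, and an induced automorphism with $k=1$ must fix every pair $(c_i,n_i)$ (Definition~\ref{defn:ind_auto}), I obtain an injection $H/\langle F\rangle\hookrightarrow\aut(\Orb_F)$, where $\aut(\Orb_F)\le S_3$ is the group of permutations of the cone points preserving the data $(c_i,n_i)$. In particular $H$ is finite.

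For part (i), if the $(c_i,n_i)$ are pairwise distinct then $\aut(\Orb_F)$ is trivial, whence $H/\langle F\rangle=1$ and $H=\langle F\rangle$. If instead $n>2g+2$, suppose for contradiction that some $G\in H$ has $\bar G\neq1$; then $|\bar G|\geq2$, so $\langle F,G\rangle$ is a finite \emph{abelian} subgroup of $\map(S_g)$ of order $n\,|\bar G|\geq2n>4g+4$, contradicting the bound $4g+4$ on the order of a finite abelian subgroup of $\map(S_g)$~\cite{ab_bound}. Hence $H=\langle F\rangle$ in this case as well; specializing to $n=|F|>2g+2$ recovers the subsequent corollary.

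For part (ii), assume $(c_i,n_i)=(c_j,n_j)$ for some $i\neq j$ and $n\leq2g+2$, so the image $\overline H:=\mathrm{im}(H\to\aut(\Orb_F))$ is a nontrivial subgroup of $S_3$ generated by transpositions interchanging cone points with identical data. The key reduction is that, since any two lifts of a fixed orbifold automorphism differ by an element of $\langle F\rangle\subseteq\langle\langle F,\iota\rangle\rangle$, it suffices to place, for each such transposition, one lift inside $\langle\langle F,\iota\rangle\rangle$. I would produce a hyperelliptic involution $\iota\in H$ commuting with $F$ whose induced automorphism $\bar\iota$ is a transposition $\tau_1$ of two equal-data cone points. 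If $\overline H=\langle\tau_1\rangle$, then $H=\langle F\rangle\cup\iota\langle F\rangle\subseteq\langle\langle F,\iota\rangle\rangle$. If $\overline H=S_3$, pick $G_\sigma\in H$ lifting the $3$-cycle; the conjugates $G_\sigma\iota G_\sigma^{-1},\,G_\sigma^2\iota G_\sigma^{-2}$ are again involutions lying in $\langle\langle F,\iota\rangle\rangle$ (a normal closure contains all conjugates of $\iota$) and induce the remaining transpositions, so $\langle\langle F,\iota\rangle\rangle$ contains lifts of every element of $S_3$. In either case every coset of $\langle F\rangle$ in $H$ has a representative in $\langle\langle F,\iota\rangle\rangle$, giving $H<\langle\langle F,\iota\rangle\rangle$.

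The main obstacle is the construction underlying part (ii): one must show that the abstract cone-point transposition is realized by a genuine involution $\iota$ commuting with $F$ and having a genus-$0$ quotient, i.e. a hyperelliptic involution. This is exactly where the hypothesis $n\leq2g+2$ and the equal-data condition $(c_i,n_i)=(c_j,n_j)$ are used. I would build $\iota$ from the branched cover $S_g\to\Orb_F=S_{0,3}$ by realizing the symmetry exchanging the two preimage-orbits with identical local rotation data, invoking the compatibility and realization machinery of~\cite{dhanwani,sanghi1}, and then verify—via Riemann--Hurwitz applied to the abelian group $\langle F,\iota\rangle$, whose quotient $\Orb_F/\langle\bar\iota\rangle$ is a sphere—that $\iota$ can be chosen with $2g+2$ fixed points, forcing $S_g/\langle\iota\rangle$ to have genus $0$. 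Establishing this genus-$0$ quotient, rather than the purely group-theoretic bookkeeping above, is the crux of the argument.
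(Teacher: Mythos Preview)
Your setup and part~(i) are correct and coincide with the paper's approach: the homomorphism $H\to\aut(\Orb_F)$ with kernel $\langle F\rangle$, the distinctness argument, and the appeal to Maclachlan's $4g+4$ bound are exactly what the paper does.

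For part~(ii), however, you miss two reductions that make the paper's argument go through, and the gap you yourself flag is more serious than your sketch suggests.

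First, Riemann--Hurwitz applied to the signature $(0;n_1,n_2,n_3)$ forces $n\geq 2g+1$. Combined with the hypothesis $n\leq 2g+2$, this leaves only $n\in\{2g+1,\,2g+2\}$. You never invoke this lower bound, so you are attempting an argument for arbitrary $n\leq 2g+2$ where in fact only two concrete values occur.

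Second, the paper rules out any $3$-cycle in $\overline H$ directly: if $\bar G$ permutes all three cone points cyclically then $(c_i,n_i)=(c_1,n)$ for all $i$, and the angle-sum condition gives $3c_1\equiv 0\pmod n$, impossible since $\gcd(c_1,n)=1$ and $n\geq 2g+1\geq 5$. Hence $[H:\langle F\rangle]\leq 2$ always, and your entire $\overline H=S_3$ discussion is vacuous.

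With these two reductions in hand, the paper does \emph{not} attempt an abstract Riemann--Hurwitz computation to force $\iota$ to be hyperelliptic. Instead it computes the signature of $S_g/\langle F,G\rangle$ in each of the two cases. For $n=2g+1$ the quotient has signature $(0;2,2g+1,4g+2)$, so $\langle F,G\rangle$ is cyclic and its unique involution is hyperelliptic. For $n=2g+2$ one finds $\Gamma(\Orb_F)=(0;g+1,2g+2,2g+2)$ and the quotient has signature $(0;2,2g+2,2g+2)$; the possible data sets for an involution commuting with $F$ are then enumerated explicitly, and each non-hyperelliptic one is shown to be conjugate to $F^{g+1}$ times the hyperelliptic one, hence lies in $\langle\langle F,\iota\rangle\rangle$. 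Your proposed route---lift the transposition and then certify $2g+2$ fixed points via Riemann--Hurwitz on $\langle F,\iota\rangle$---could in principle be made to work, but only after restricting to $n\in\{2g+1,2g+2\}$; without that restriction the fixed-point count cannot be pinned down, and in the $n=2g+2$ case you must still confront the non-hyperelliptic involutions that the paper handles by the $F^{g+1}G_1$ identification.
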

\begin{proof}
Since $F$ is irreducible, by Theorem \ref{thm:gilman}, $\Orb_F\approx S_{0,3}$. By Definition \ref{defn:data_set} (\ref{eqn:r-h}), we have $n\geq 2g+1$. For $G\in H\setminus \langle F\rangle$, by Lemma \ref{lem:ind_auto}, there exist $\bar{\G}\in \aut(\Orb_{F})$ induced by $\G$. Since $\map(S_{0,3})\cong \Sigma_3$ (where $\Sigma_3$ is the permutation group on three letters), it follows that $|G|=2$ or $3$. 

First, we consider the case when $|G|=3$. Since $\bar{\G}\in \aut(\Orb_{F})$, $\bar{\G}$ permutes the three cone points of $\Orb_{F}$, which implies that $(c_i,n_i)=(c_j,n_j)$ for all $i,j\in \{1,2,3\}$. Since  By Definition \ref{defn:data_set} (\ref{eqn:lcm}), we have $(c_i,n_i)=(c_1,n)$ for every $i$. Moreover, by Definition \ref{defn:data_set} (\ref{eqn:angle_sum}), we have $3c_1\equiv 0 \pmod n$, which is impossible (as $n \geq 4$).

We now consider the case when $|G|=2$. Since $\bar{\G}\in \aut(\Orb_{F})$, $\bar{\G}$ permutes two cone points of $\Orb_{F}$ and fixes the third one. By Definition \ref{defn:data_set} (\ref{eqn:lcm}), we have $(c_i,n_i)=(c_j,n_j)=(c_i,n)$, for some $i,j\in \{1,2,3\}$ and $i\neq j$. We note that such a $\bar{G}\in \Sigma_3$ is uniquely determined. Since $G\in H$, we have $\langle F,G\rangle \cong \mathbb{Z}_n \times \mathbb{Z}_2$. By a result of Maclachlan \cite{ab_bound}, it follows that $|\langle F,G\rangle|\leq 4g+4$, and this implies that $n\leq 2g+2$. Thus, ($i$) follows.

When $n=2g+1$, by Definition \ref{defn:data_set} (\ref{eqn:r-h}), we have $n_i=2g+1$ for every $i$. Since $\Gamma(S_g/\langle \F,\G\rangle)=(0;2,2g+1,4g+2)$, it follows that $\langle \F,\G \rangle$ is cyclic. From the theory developed in \cite{dhanwani}, it follows that $\G$ is a hyperelliptic involution. When $n=2g+2$, by Definition \ref{defn:data_set} (\ref{eqn:r-h}), we have $\Gamma(\Orb_F)=(0;g+1,2g+2,2g+2)$. Since $\bar{\G}\in \aut(\Orb_F)$, it follows that $\Gamma(S_g/\langle \F,\G\rangle)=(0;2,2g+2,2g+2)$. Again, from the theory developed in \cite{dhanwani}, the possible data sets for $G$ are:
\begin{enumerate}[(a)]
\item $D_{G_1}=(2,0;\underbrace{(1,2),(1,2),\dots,(1,2)}_{(2g+2) \text{ times}})$,
\item $D_{G_2}=(2,g/2;(1,2),(1,2))$ when $g$ is even, and
\item $D_{G_3}=(2,(g+1)/2),1;-)$ when $g$ is odd.
\end{enumerate}
Furthermore, it can be shown that $F^{g+1}G_1$ is conjugate to $G_2$ (resp. $G_3$) when $g$ is even (resp. odd). Further, we note that lifts of $\bar{G}$ are $GF^j$, where $1\leq j\leq n$. This concludes our argument for ($ii$). 
\end{proof}

\bibliographystyle{abbrv}
\bibliography{Metacyclic}
\end{document}